\numberwithin{equation}{section}
\newtheorem{thm}{Theorem}[section]
\newtheorem{prop}[thm]{Proposition}
\newtheorem{lem}[thm]{Lemma}
\newtheorem{rem}[thm]{Remark}
\newtheorem{conj}[thm]{Conjecture}
\newcommand{\nn}{\nonumber}
\theoremstyle{definition}
\newcommand{\ket}[1]{{| #1 \rangle}}      
\newcommand{\C}{{\mathbb C}}
\newcommand{\Z}{{\mathbb Z}}
\newcommand{\E}{{\mathcal E}}
\newcommand{\F}{\mathcal F}
\newcommand{\gl}{\mathfrak{gl}}
\newcommand{\End}{\mathop{\rm End}}
\newcommand{\id}{{\rm id}}
\newcommand{\Sym}{\mathrm{Sym}}
\newcommand{\wt}{{\rm wt}\,}
\newcommand{\on}{\operatorname}
\newcommand{\mc}{\mathcal}
\newcommand{\cont}[2]{\contraction[1ex]{}{#1}{}{#2} #1 #2}
\newcommand{\ce}{\check{e}}
\newcommand{\cf}{\check{f}}
\newcommand{\cq}{\check{q}}
\newcommand{\bq}{\bar{q}}
\newcommand{\cs}{\check{s}}
\newcommand{\cb}{\check{\beta}}
\newcommand{\cE}{\check{\E}}
\newcommand{\ck}{\check{k}}
\newcommand{\calA}{\mathcal{A}}
\newcommand{\Fb}{\mathbb F}
\newcommand{\Fbb}{\bar{\mathbb F}}
\newcommand{\Sc}{\mathcal{S}}
\newcommand{\Rv}{\tilde R}  
\newcommand{\Vir}{{ Vir}}
\newcommand{\jm}{}
\begin{document}

\begin{title}[Extensions of quantum toroidal $\gl_1$]
{Extensions of a commuting pair of quantum toroidal $\gl_1$}
\end{title}

\author{B. Feigin, M. Jimbo, and E. Mukhin}
\address{BF: 
National Research University Higher School of Economics,  101000, Myasnitskaya ul. 20, Moscow,  Russia;
Hebrew University of Jerusalem, Einstein Institute of Mathematics,
Givat Ram. Jerusalem, 9190401, Israel
}
\email{borfeigin@gmail.com}

\address{MJ: 
Professor Emeritus,
Rikkyo University, Toshima-ku, Tokyo 171-8501, Japan}
\email{jimbomm1@gmail.com}

\address{EM: Department of Mathematics,
Indiana University Indianapolis,
402 N. Blackford St., LD 270, 
Indianapolis, IN 46202, USA}
\email{emukhin@iu.edu}


\begin{abstract} 
We introduce a family of algebras $\mathcal{A}_{M,N}$, $M,N\in\Z$, 
as an extension of a pair of commuting quantum toroidal
$\gl_1$ subalgebras $\E_1,\check{\E}_1$, wherein the parameters are tuned 
in a specific way according to $M,N$.  
In the case $M=\pm 1$, algebra $\mathcal{A}_{\pm1,N}$
is a shifted quantum toroidal $\gl_2$ algebra
introduced in \cite{FJM2}.
Conjecturally there is a coproduct homomorphism
$\mathcal{A}_{M,N_1+N_2}\to\mathcal{A}_{M,N_1}\hat\otimes\mathcal{A}_{M,N_2}$
to a completed tensor product,
whose restriction to  the subalgebras $\E_1,\check{\E}_1$ 
coincides with the standard \jm{Drinfeld coproduct}. 
We give examples of $\mathcal{A}_{M,N}$ modules constructed
on certain direct sums of tensor products of Fock modules 
of $\E_1\otimes\check{\E}_1$. 
\end{abstract}

\dedicatory{To the memory of Masatoshi Noumi}

\keywords{vertex operators, intertwiners, quantum toroidal algebra}

\hfill  \today

\maketitle

\section{Introduction}

Construction of 
Vertex Operator Algebras (VOAs) is an important and non-trivial task. 
It has a long history and a vast literature exists on this subject.
Various methods of construction have been known 
including the coset construction, free field realization and the
Drinfeld-Sokolov reduction. 

Yet another approach is to extend a given VOA
by adjoining local fields that transform as its representations. 
To illustrate the idea, let us take the Virasoro algebra $\Vir_\beta$
with the central charge $c=13-6(\beta+\beta^{-1})$. 
The primary fields $\varphi_{1,2}(z),\varphi_{2,1}(z)$
provide representations of $\Vir_\beta$, 
but they are not mutually local because their correlation
functions develop non-trivial monodromy. 
A way out of this problem is to
prepare two copies of the Virasoro algebra $\Vir_\beta$, 
${\Vir}_{\check{\beta}}$ and 
impose a relation between their central charges
\begin{align*}
\beta+\check{\beta}=M+1\qquad \in\Z\,.
\end{align*}
Let $\varphi^{\pm}_{1,2}(z)$, $\check{\varphi}^{\pm}_{2,1}(z)$ be
the components of the corresponding primary fields.
Under the condition above, the monodromy cancels out from the combination
\begin{align*}
X(z)= \varphi^+_{1,2}(z)\check{\varphi}_{2,1}^-(z) 
+\varphi^-_{1,2}(z)\check{\varphi}_{2,1}^+(z) \,,
\end{align*}
yielding a local field 
which can be used to obtain a new VOA extending
$\Vir_\beta\oplus \Vir_{\check\beta}$, \jm{see \cite{La}.}
This sort of a ``gluing'' method has been studied  
for a large class of VOAs 
\cite{BBFLT}, \cite{BFL}, \cite{PR}, \cite{CG}, \cite{B}, \cite{LL}.

In this article we consider a $q$-analog of the gluing  construction 
in the setting of quantum toroidal algebras.
We start with a pair of commuting quantum toroidal $\gl_1$ algebras
$\E_1=\E_1(q_1,q_2,q_3)$, $\check{\E}_1=\E_1(\cq_1,\cq_2,\cq_3)$,
and impose the relations 
\begin{align*}
\cq_1=q_1^{-1}\,,\quad \cq_2=q_2q_1^{-M+1}\,,\quad 
\cq_3=q_3q_1^{M+1}\,,\quad \check{C}=C q_1^{N/2}\,.
\end{align*}
Here $M,N\in \Z$, and $C,\check{C}$ are central elements of 
$\E_1,\check{\E}_1$, respectively. 
Let further 
$\E_1[K]$, $\check{\E}_1[\check{K}]$ 
be their extensions by invertible split central elements $K$, $\check{K}$.
We introduce an algebra 
$\mathcal{A}_{M,N}=\mathcal{A}_{M,N}(q_1,q_2,q_3)$ 
as an extension of 
$\E_1[K]\otimes\check{\E}_1[\check{K}]$ by currents
$\{X^+_i(z)\}_{i\in\Z+N/2}$ and $\{X^-_i(z)\}_{i\in\Z}$
which transform as vector representations under the adjoint actions of
$\E_1[K]$
 and $\check{\E}_1[\check{K}]$, see 
(R1) in Section \ref{sec:defAMN}.
Our goal is to write down an explicit presentation of $\mathcal{A}_{M,N}$
by generators and relations, and to give its representations
in terms of free fields. We also conjecture that there is a coproduct
homomorphism 
$\mathcal{A}_{M,N_1+N_2}\to\mathcal{A}_{M,N_1}\hat\otimes\mathcal{A}_{M,N_2}$
to a completed tensor product,
which extends the standard coproduct of the subalgebra
$\E_1[K]\otimes\check{\E}_1[\check K]$. 

In fact, $\mathcal{A}_{M,N}$ is not an algebra in the usual sense, 
as the relations involve fused currents whose Fourier components are
infinite sums of generators. These relations are well-defined   
on admissible representations
in the sense discussed in Section \ref{sec:fused currents}
and Remark \ref{rem:admissible} below. 
Admissible representations considered in this paper are 
direct sums of tensor products of Fock representations of $\E_1,\check{\E}_1$.

The special case $M=1$ was treated in our previous paper \cite{FJM2}. 
There we have shown that,
on admissible representations, $\mathcal{A}_{1,0}$ is essentially 
isomorphic to an extension $\E_2[K]$ of
the quantum toroidal $\gl_2$ algebra $\E_2$ by 
an invertible (but not central) element $K$. 
Due to the symmetry 
$\mathcal{A}_{M,N}(q_1,q_2,q_3)\simeq \mathcal{A}_{-M,N}(q_1,q_3,q_2)$,
$\mathcal{A}_{-1,0}$ also 
is isomorphic to $\E_2[K]$ with different parameters.
For even $N\ge 0$, we recover the extensions of $\gl(N+2|1)$ discussed in \cite{FJM}
as representations of $\mathcal{A}_{-1,N}$, see Example 3 
in Section \ref{sec:mn}.
In addition, $\mathcal{A}_{0,0}$ is related to the quantum toroidal 
$\gl_{1|1}$ algebra $\E_{1|1}$, see Section \ref{sec:N=0}.

In \cite{LL}, a closely related construction 
is discussed for affine Yangian $\gl_1$ with $M\in\{1,0,-1\}$ and general $N$. 
The approaches in \cite{LL} and in the present paper are 
``orthogonal'' to each other: 
the presentation in \cite{LL} is based on the picture of plane partitions, 
whereas we deal here with free fields and vertex operators. 
This seems to suggest that $\mathcal{A}_{M,N}$ allows for 
another set of generators whose action has a combinatorial
description in terms of plane partitions, and
that the two sets of generators are related 
by an action of $SL(2,\Z)$. 
\medskip

We now outline the content of the paper in some detail.

The defining relations of $\mathcal{A}_{M,N}$
are given in (R1)--(R3) in Section \ref{sec:defAMN}.
Relation (R1) says that $X^\pm_{\mp i}(q_1^i z)$ transform 
as vector representations of $\E_1[K]$, 
while $X^\pm_i(z)$ transform as vector representations of 
$\check{\E}_1[\check K]$.
In particular, for any chosen $i,j$, the currents  $X^+_i(z)$, $X^-_j(z)$
together with  $\E_1[K]\otimes\check{\E}_1[\check K]$ generate
$\mathcal{A}_{M,N}$. 
Relation (R2) is a set of quadratic relations among 
$X^\pm_i(z)$ and $X^\pm_j(z)$.
Relation (R3) gives the commutator $[X_i^+(z),X^-_j(w)]$ as a 
sum of fused currents from $\E_1[K]\otimes\check{\E}_1[\check K]$
(when $M$ is even, the commutator should be understood in the sense of superalgebras).
The integer $N$ enters only (R3) and, in particular, 
$[X_i^+(z),X^-_j(w)]=0$ holds if $N/2<i+j<-N/2$. Such relations are
characteristic to the ones in the (anti-dominantly)
shifted quantum affine \cite{FT} or 
quantum toroidal \cite{N},\cite{NW} algebras. 
It should be noted however that the algebra $\mathcal{A}_{M,N}$ is not the same as those
in the literature: the main difference is that
the role of Cartan currents in the latter 
is played by the quantum toroidal $\gl_1$ algebras 
$\E_1[K]\otimes\check\E_1[\check K]$.

We conjecture that there is a ``coproduct'' homomorphism to a completed 
tensor product
\begin{align*}
\Delta_{N_1,N_2}\,&:\,
\calA_{M,N_1+N_2}
\longrightarrow
\calA_{M,N_1}\hat \otimes \calA_{M,N_2}
\,,
\end{align*}
whose restriction to $\E_1[K]\otimes\check\E_1[\check K]$
coincides with the standard coproduct of the latter, \jm{see \eqref{coproE1} below,}
and 
\begin{align*}
&\Delta_{N_1,N_2} X^+_i(z) 
=X^+_i(z)\otimes 1+ \mathcal{R}^{-1}(1\otimes X^+_i(z))\mathcal{R}\,,\\
&\Delta_{N_1,N_2} X^-_i(z) 
=\mathcal{R}^{-1}(X^-_i(z)\otimes 1)\mathcal{R}+ 1\otimes X^-_i(z)\,,
\end{align*}
where $\mathcal{R}$ 
is the universal $R$ matrix of $\E_1\otimes\check{\E}_1$.
See Conjecture \ref{conj:copro}.
The conjecture says in particular that $\mathcal{A}_{M,0}$ is a 
\jm{topological} 
Hopf algebra.

In many cases $\mathcal{A}_{M,N}$ has representations 
given in terms of free fields.
The building blocks are the Fock modules $\F_c(v)$ , $c\in\{1,2,3\}$, 
of $\E_1$ in the free field realization,
and certain intertwiners relating a Fock module and its
tensor product with a vector representation.
The most basic case is $N=M-1$.  
In Theorem \ref{thm:F22} we show that $\mathcal{A}_{M,M-1}$ 
has a representation on the direct sum 
\begin{align*}
\mathbb{F}_{2;2}(v;\check v) 
=\bigoplus_{n\in\Z}\F_2(q_3^{-n}v)\boxtimes \check\F_2(\cq_3^{-n}\check v)\,,
\end{align*}
where the generators $X_i^\pm(z)$ are realized by single vertex operators. 

Actually, in the definition of $\mathcal{A}_{M,N}$,
we choose $q_1$ to play a special role, 
breaking the symmetry between $q_1,q_2,q_3$. 
As a consequence, the Fock module of color $c=1$ has a special feature. 
Namely the $\E_1[K]$ module $\F_1(v)$ becomes 
an $\mathcal{A}_{M,1}$ module  
by letting $\check \E_1[\check K]$ act trivially and $X^\pm_i(z)$ act as $0$. 
A similar statement holds for the $\check\E_1[\check K]$ module $\check\F_1(v)$.

The coproduct conjecture holds true when applied to tensor multiplication by 
an $\mc A_{M,1}$ module
$\F_1(v)$ or $\check\F_1(v)$
with an arbitrary representation, see Theorem \ref{coproduct thm}.
In particular, tensor products with $\mathbb{F}_{2;2}(v;\check v)$
yield the following. 
Setting 
\begin{align*}
&\F_{c_1,\ldots,c_l}(\mathbf{v})
=\F_{c_1}(v_{1})\otimes\cdots\otimes\F_{c_l}(v_{l})\,,
\quad c_j\in\{1,2,3\},
\\
&\mathbf{v}p^{\delta_a}=(v_1,\ldots,\overset{\underset{\smile}{a}}{v_a}p,
\ldots,v_l)\,,
\end{align*}
and similarly for the checked version, we have a representation of
 $\mathcal{A}_{M,N}$ with $N=M+m+\check m-3$ for arbitrary $m,\check m\ge1$
on the space
\begin{align*}
\mathbb{F}_{1^{m-1},2;1^{\check m-1},2}
(\mathbf{v};\check{\mathbf{v}})
=
\bigoplus_{n\in\Z}
\F_{1^{m-1},2}
(\mathbf{v}q_3^{-n\delta_{m}})
\boxtimes
\check{\F}_{1^{\check m-1},2}
(\check{\mathbf{v}}\cq_3^{-n\delta_{\check m}})\,.
\end{align*}
In this representation the generators $X_i^\pm(z)$ 
are realized by finite sums of vertex operators. 

The coproduct conjecture predicts in particular the existence of
the tensor product of $\mathbb{F}_{2;2}$ with itself. 
With the aid of screened intertwiners, for odd $M$, we construct directly 
a representation of $\mathcal{A}_{M,2M-2}$ on the space 
\begin{align*}
\mathbb{F}_{2,2;2,2}(v_1,v_2;\check{v}_1,\check{v}_2) 
=\bigoplus_{n_1,n_2\in\Z}
\bigl(\F_2(q_3^{-n_1}v_1)\otimes \F_2(q_3^{-n_2}v_2)\bigr)
\boxtimes
\bigl(\check{\F}_2(\cq_3^{-n_1}\check{v}_1)\otimes 
\check{\F}_2(\cq_3^{-n_2}\check{v}_2)\bigr)\,.
\end{align*}
See Theorem \ref{thm:level2}. 
The generators $X_i^\pm(z)$ are realized by contour integrals,
or after taking residues, by infinite sums of vertex operators
(Jackson integrals).

More generally we expect that the same method yields
representations of $\mathcal{A}_{M,N}$
of the form $\mathbb{F}_{1^a,2^b,3^c;1^{d},2^b,3^c}$ for 
$a,b,c,d\in\Z_{\ge0}$,
where $N=a+d-b-c+(b-c)M$. We sketch the construction in the case $a=c=d=0$
and odd $M$
in Appendix \ref{sec:2222}.
We view the existence of these representations
as a supporting evidence for the coproduct conjecture. 
\medskip

There are a number of questions which remain unanswered. 
\medskip

Little is known about the structure
of the algebra $\mathcal{A}_{M,N}$, such as its graded character or a
 PBW type result. 

In general, the conjectured coproduct of $X^\pm_i(z)$ involves infinite sums 
of generating currents. 
This is similar to the Drinfeld coproduct of 
the  quantum toroidal $\gl_2$ algebra $\E_2$; 
see \eqref{Delta-inf} and discussions around there.
On the other hand, 
the standard \jm{Drinfeld} coproduct $\Delta$ of $\E_2$ 
is given by a finite number of terms in the 
\jm{generating currents $E_i(z),F_i(z), K^\pm_i(z)$ of  $\E_2$.}
Unlike the conjectured coproduct $\Delta_{0,0}$, 
however, $\Delta$ does not preserve the subalgebra 
$\E_1\otimes\check\E_1\subset \E_2$.
This suggests that $\mathcal{A}_{M,N}$ may have another
coproduct of finite type, similar to the one for 
the shifted quantum affine algebras \cite{FT}. 
\footnote{\jm{{\it Note added in proof.}
After finishing the manuscript we have found that
such a finite type corpoduct can be written in the case $M\in\{1,0,-1\}$.
}}

In this paper we treat the simplest situation of gluing 
two $\E_1$ algebras. 
Naturally we expect that this construction carries over to 
gluing $n$ of them, and further to the case 
of general quiver quantum toroidal algebras, see e.g. \cite{NW}.

We hope to return to these issues in the future.
\medskip

The paper is planned as follows.
In Section \ref{sec:gl1} we introduce our convention about the quantum
toroidal $\gl_1$ algebra. We include the vector and the Fock 
representations, and the intertwiners relating a Fock module
and its tensor product with a vector representation. 
We also touch upon the currents obtained by ``fusion'' of the basic currents
$e(z),f(z)$. In Section \ref{sec:AlgAMN} we define the algebra 
$\mathcal{A}_{M,N}$ by generators and relations. We comment on its
elementary properties including automorphisms and gradings. 
We then state our conjecture on the coproduct. 
In Section \ref{sec:rep} we discuss representations of $\mathcal{A}_{M,N}$. 

The text is followed by appendices. 
In Appendix \ref{sec:FermionicR} we give a characterization of 
the $\E_1$ R matrix acting on $\F_1(v_1)\otimes\F_2(v_2)$ using fermions. 
In Appendix \ref{sec:Proof} we prove Theorem \ref{thm:level2}.
In Appendix \ref{sec:2222} we sketch the main steps of constructing 
representations $\mathbb{F}_{2^n;2^n}$.

\vskip1cm

\noindent{\it Notation.}\quad
We use the standard symbols for infinite products
$(z_1,\ldots,z_r;q)_\infty=\prod_{s=1}^r\prod_{j=0}^\infty(1-z_s q^j)$
and  $\theta_q(x)=(x,qx^{-1},q;q)_\infty$.
For $n\in\Z$ we set $(x;q)_n=(x;q)_\infty/(q^nx;q)_\infty=1/(q^nx;q)_{-n}$.

We fix non-zero complex numbers
$q_1,q_2,q_3$ such that $q_1q_2q_3=1$.
We assume that $q_1,q_2$ are generic, meaning
that $q_1^aq_2^b=1$ with $a,b\in\Z$ if and only if $a=b=0$. 
We fix a choice of logarithms $\log q_i$ such that $\log q_1+\log q_2+\log q_3=0$ and define
$q_i^\alpha=e^{\alpha\log q_i}$ for all complex numbers $\alpha$. 

We set for $i=1,2,3$
\begin{align}
s_i&=q_i^{1/2}, 
\quad c_i=-(s_j-s_j^{-1})(s_k-s_k^{-1})\quad 
\text{for $\{i,j,k\}=\{1,2,3\}$}\,,
\label{ci}\\
\beta&=-\frac{\log q_3}{\log q_1}\,.\nn
\end{align}

We use the normal ordering symbol $:~:$ by 
applying the usual rule, bringing creation 
operators to the left and annihilation operators to the right. 
Formally, let 
$\{a_{i,r}\}_{i\in I\,, r\in \Z\setminus\{0\}}$ be a set of 
generators of a Heisenberg algerba satisfying 
$[a_{i,r},a_{j,s}]=B_{i,j}r\delta_{r+s,0}$,
where $B_{i,j}\in\C$. 
For a monomial $A$ in the letters $\{a_{i,r}\}$ 
we define $:A:$ inductively setting
$:1:=1$, $:a_{i,r}A:=a_{i,r}A$ for $r<0$ and $:a_{i,r}A:=Aa_{i,r}$
for $r>0$, and extending it by linearity. 
Some formulas involve also ``zero mode'' operators:
$P,e^Q$ satisfying $Pe^Q=e^Q(P+\gamma)$ with some $\gamma\in\C$.
In this case we bring $e^Q$ to the left and $P$ to the right.

\section{Quantum toroidal $\gl_1$ algebra.}\label{sec:gl1}

In this section we introduce our notation concerning
the quantum toroidal $\gl_1$ algebra and recall some well known facts.  

\subsection{Conventions}

Let
\begin{align*}
& g(z,w)=(z-q_1w)(z-q_2w)(z-q_3w)\,,\\
&\kappa_r=(1-q_1^r)(1-q_2^r)(1-q_3^r), \qquad  r\in \Z\,.
\end{align*} 

The quantum toroidal $\gl_1$ algebra $\E_1=\E_1(q_1,q_2,q_3)$ is 
generated by elements $e_j,f_j, h_r$,
where $j\in\Z$, $r\in\Z\backslash\{0\}$,
and invertible central elements $C$, $\psi_0$. 
In terms of the generating series
\begin{align*}
&e(z) =\sum_{j\in \Z} e_{j}z^{-j}, \quad 
f(z) =\sum_{j\in\Z} f_{j}z^{-j}, \quad 
\psi^{\pm}(z) = \psi_0^{\pm1}
\exp\bigl(\sum_{r=1}^\infty \kappa_r h_{\pm r}z^{\mp r}\bigr)\,,
\end{align*}
the defining relations read as follows:
\begin{align*}
&[h_{r},h_{s}]=
\delta_{r+s,0}\,\frac{1}{r} \frac{C^{r}-C^{-r}}{\kappa_r}\,,
\\ 
&g(C^{(1\pm1)/2}z,w)\psi^\pm(z)e(w)+g(w,C^{(1\pm1)/2}z)e(w)\psi^\pm(z)=0
\,,\\ 
&g(w,C^{(1\mp1)/2}z) \psi^\pm(z)f(w)+g(C^{(1\mp1)/2}z,w)f(w)\psi^\pm(z)=0\,,
\\
&[e(z),f(w)]=\frac{1}{\kappa_1}
\bigl(\delta(C w/z)
\psi^+(w)
-\delta(Cz/w) 
\psi^-(z)\bigr),\\
&g(z,w)e(z)e(w)+g(w,z)e(w)e(z)=0, \\
&g(w,z)f(z)f(w)+g(z,w)f(w)f(z)=0,\\
&\mathop{\on{Sym}}_{z_1,z_2,z_3}\, z_1 z_2^2 [e(z_1),[e(z_2),e(z_3)]]=0,\\
&\mathop{\on{Sym}}_{z_1,z_2,z_3}\, z_1 z_2^2 [f(z_1),[f(z_2),f(z_3)]]=0\,,
\end{align*}
where $\Sym$ stands for symmetrization in the variables indicated.

We use the coproduct 
\begin{align}
&\Delta e(z)=e(z)\otimes 1+\psi^-(z)\otimes e(C_1z)\,,\nn\\ 
&\Delta f(z)=f(C_2z)\otimes \psi^+(z)+1\otimes f(z)\,,\nn\\ 
&\Delta\psi^-(z)=\psi^-(z)\otimes\psi^-(C_1z)\,,\label{coproE1}\\
&\Delta\psi^+(z)=\psi^+(C_2z)\otimes\psi^+(z)\,,\nn\\
&\Delta C=C\otimes C\,,\nn
\end{align}
where $C_1=C\otimes 1$, $C_2=1\otimes C$, the counit
$\epsilon\, e(z) = \epsilon\, f(z) = 0$, $\epsilon \,\psi^\pm (z) = 1$, $\epsilon\, C = 1$, and the antipode
\begin{align}
&Se(z)=-\psi^-(C^{-1}z)^{-1}e(C^{-1}z), \nn\\
&Sf(z)=-f(C^{-1}z) \psi^+(C^{-1}z)^{-1},\label{antipode} \\ 
&S\psi^\pm(z)=\psi^\pm(C^{-1}z)^{-1}, \nn \\
&S\, C=C^{-1}. \nn
\end{align}

Algebra $\E_1$  has a $\Z$-grading, denoted $\deg$, given by the assignment
\begin{align}\label{hom grading}
&\deg e_j=\deg f_j=j\,,\quad \deg h_r=r\,,\quad 
\deg x=0\,\quad \text{for $x=C,\psi_0$.}
\end{align}

We say that an $\E_1$ module 
has level $\ell\in\C^\times$ 
if $C$ acts by the scalar $\ell$.

\subsection{Vector representations}

Set
\begin{align}
&\omega(x)=\frac{1-q_2x}{1-x}\frac{1-q_3x}{1-q_2q_3x}
\label{omega1}\,,
\end{align}
which satisfies
\begin{align*}
\omega(x)=\omega(q_1/x)\,,\quad 
\jm{\frac{\omega(q_1w/z)}{\omega(w/z)}}
=-\frac{g(w,z)}{g(z,w)} \,.
\end{align*}
For a rational function $f(x)$, we let
$f^\pm(x)$ stand for its Laurent series
expansions at $x^{\pm1}=0$.

The vector representation $V_1(v)$ 
of color $1$ and parameter $v\in\C^\times$
is defined on the underlying vector space with basis 
$\{\ket{i,v}\mid i\in\Z\}$.
The generators act as
\begin{align}
&(s_1^{-1}-s_1)e(z)\ket{i,v}=\delta(q_1^iv/z)\ket{i+1,v}\,,\nn\\
&(s_1^{-1}-s_1)f(z)\ket{i,v}=\delta(q_1^{i-1}v/z)\ket{i-1,v}\,,
\label{vector1}\\
&\psi^\pm(z)\ket{i,v}=\omega^\pm(q_1^iv/z)\ket{i,v}\,,\nn
\end{align}
and $C=1$, $\psi_0=1$.

We also view $V_1(v)$ as a right module, denoted $V^R_1(v)$,  
via the anti-automorphism $e(z)\leftrightarrow f(z)$, 
$\psi^\pm(z)\leftrightarrow \psi^\pm(z)$ which exists for $C=1$.

Vector representations $V_c(v)$ and $V^R_c(v)$ of color $c\in\{1,2,3\}$
are defined by interchanging the role of $q_1$ with $q_c$. 
\medskip

\begin{rem}\label{vector1-}
{\rm The same formula \eqref{vector1} on the vector space with 
basis $\{\ket{i,v}\mid i \in\Z+1/2\}$ defines a representation $V^-_1(v)$. 
The map $V^-_1(v)\to V_1(s_1v)$ sending $\ket{i,v}\mapsto \ket{i-1/2,s_1v}$
is an isomorphism of $\E_1$ modules.
\qed} 
\end{rem}

\subsection{Fock representations}
The Fock representation $\F_2(v)$ 
of color $2$ and parameter $v\in\C^\times$
is defined on the irreducible representation 
$\C[h_r\mid r<0]$ of the Heisenberg algebra generated by $\{h_r\}_{r\neq0}$
with $C=s_2$.
The generators $e(z),f(z)$ act by vertex operators, 
\begin{align}
&e(z)\mapsto -c_2^{-1}v\, \xi^{+}(z)\,,
\label{E1Fock-e}\\
&\xi^{+}(z)=
\exp\Bigl(\sum_{r>0}(1-q_1^r)(1-q_3^r)h_{-r}z^r\Bigr)
\exp\Bigl(\sum_{r>0}(1-q_1^{-r})(1-q_3^{-r})s_2^{-r} h_{r}z^{-r}\Bigr)\,,
\nn\\
&f(z)\mapsto c_2^{-1}v^{-1}\, \xi^{-}(z)\,,
\label{E1Fock-f}\\
&\xi^{-}(z)=
\exp\Bigl(-\sum_{r>0}(1-q_1^{r})(1-q_3^{r}) s_2^{r} h_{-r}z^r\Bigr)
\exp\Bigl(-\sum_{r>0}(1-q_1^{-r})(1-q_3^{-r}) h_{r}z^{-r}\Bigr)\,,
\nn
\end{align}
and $C=s_2$, $\psi_0=1$.
Note the identities
\begin{align}
&{\psi^-(z)}^{-1}\xi^+(z)=:{\xi^-(s_2z)}^{-1}:\,,
\quad
\xi^-(z){\psi^+(z)}^{-1}=:{\xi^+(s_2z)}^{-1}:\,.
\label{psi e psi f}
\end{align}

Fock representation $\F_c(v)$ of color $c\in\{1,2,3\}$ is defined by
interchanging the role of $q_2$ with $q_c$.
\medskip

Dealing with Fock spaces with different spectral parameters, 
it will be convenient to use zero mode operators. 
For each $v\in\C^{\times}$ we fix a highest weight vector 
$\ket{\emptyset}_{\F_2(v)}$ of $\F_2(v)$ and
introduce
\begin{align*}
e^Q:\F_2(v)\to\F_2(vq_3^{-1})\,,\quad P:\F_2(v)\to\F_2(v)\,,
\quad 
\end{align*}
which commute with the action of $\psi^{\pm}(z)$ and satisfy
\begin{align*}
e^Q\ket{\emptyset}_{\F_2(v)}=\ket{\emptyset}_{\F_2(v q_3^{-1})}\,,\quad 
P\ket{\emptyset}_{\F_2(v)}=\lambda \ket{\emptyset}_{\F_2(v)}\,
\quad \text{for $v=q_1^\lambda$}.
\end{align*}
We have $[P,e^Q]=\beta e^Q$ (we recall that $q_3=q_1^{-\beta}$).

\medskip

\subsection{Intertwiners.}\label{subsec:Intertwiner}

For each permutation $(a,b,c)$ of $(1,2,3)$
there exist homomorphisms of left $\E_1$ modules of the form
\begin{align*}
&V_{a}(z)\otimes\F_b(v)\longrightarrow \F_b(q_c^{-1}v)\,,\\ 
&\F_b(v)\longrightarrow \F_b(q_cv)\otimes V_a(z)\,.
\end{align*}
We shall refer to them as ``intertwiners''. 
Explicitly they are given by vertex operators, see e.g. 
\cite{AFS}, \cite{Z}, \cite{FOS}
\footnote{The parameters used in these references
are $q=q_1$, $t^{-1}=q_3$, $\gamma=s_2$.}.
For definiteness we take $a=1$, $b=2$ below and present the formulas.

Define
\begin{align}
&\Phi(z)=e^{Q}z^{ P+\beta/2}\Phi^{osc}(z)\,,
\label{Phiosc}\\
&\Phi^{osc}(z)=
\exp\Bigl(-\sum_{r>0}(1-q_3^r)h_{-r}z^r\Bigr) 
\exp\Bigl(-\sum_{r>0}(1-q_3^{-r})s_2^{-r} h_{r}z^{-r}\Bigr)\,,
\nn\\
&\Phi^*(z)=e^{-Q}z^{-P+\beta/2}\Phi^{*osc}(z)\,,
\label{Phiosc*}\\
&\Phi^{*osc}(z)=
\exp\Bigl(\sum_{r>0}(1-q_3^r)s_2^r h_{-r}z^r\Bigr) 
\exp\Bigl(\sum_{r>0}(1-q_3^{-r})h_{r}z^{-r}\Bigr)\,.
\nn
\end{align}
Then the following maps are homomorphisms of left 
$\E_1$ modules for $v\in\C^\times$:
\footnote{Due to the infinite sum,  
\eqref{F2F2V1} is only a formal expression.  
This causes a problem when we compose intertwiners, see Remark
\ref{rem:screened VO}.}
\begin{align}
&V_1(z)\otimes \F_2(v)\longrightarrow \F_2(q_3^{-1}v)\,,
\quad
\ket{i,z}
\otimes w\mapsto (s_3^{-1}v)^i\Phi^{osc}(q_1^iz)w\,, 
\label{V1F2F2}
\\
&\F_2(v)\longrightarrow \F_2(q_3 v)\otimes V_1(z)\,,
\quad w\mapsto \sum_{i\in\Z} 
(s_3v)^{-i} \Phi^{*osc}(q_1^iz) w\otimes\ket{i,z}
\,.
\label{F2F2V1}
\end{align} 
Equivalently, the maps
\begin{align}
&V_1(z) \longrightarrow 
\mathrm{Hom}_{\C}\bigl(\F_2(v),\F_2(vq_3^{-1})
\bigr)
\,,\quad
\ket{i,z}
\mapsto \Phi(q_1^iz)\,,
\label{IntPhi}
\\
&V^R_1(z) 
\longrightarrow 
\mathrm{Hom}_{\C}\bigl(\F_2(v),\F_2(vq_3)\bigr)\,,
\quad
\ket{i,z}
\mapsto \Phi^*(q_1^iz)\,,
\label{IntPhi*}
\end{align}
are homomorphisms of left/right $\E_1$ modules.
Here the Hom spaces are endowed with the left adjoint action $\mathrm{ad}$ 
and the right adjoint action $\mathrm{ad}^R$ of $x\in\E_1$:
\begin{align}\label{ad-act}
&\mathrm{ad}(x)A =\sum_s x_s' A S\bigl(x_s''\bigr)\,,
\quad B\, \mathrm{ad}^R(x)=\sum_s S\bigl(x_s'\bigr)B x_s''\,,
\end{align}
where $A\in \mathrm{Hom}_{\C}\bigl(\F_2(v),\F_2(vq_3^{-1})\bigr)$, 
$B\in \mathrm{Hom}_{\C}\bigl(\F_2(v),\F_2(vq_3)\bigr)$,
 $\Delta x=\sum_s x_s'\otimes x_s''$, and
$S$ denotes the antipode.

\subsection{Fused currents}\label{sec:fused currents}

Quite generally, let $W=\bigoplus_{d\in\Z} W_d$ 
be a graded vector space such that 
\begin{align}\label{admissible}
\dim W_d<\infty\,, \quad W_d=0 \quad \text{for $d\gg0$}.  
\end{align}
A graded current is a formal series
$a(z)=\sum_{k\in\Z} a_k z^{-k}$, $a_k\in\End(W)$, 
satisfying $a_{k}W_d\subset W_{d+k}$ for all $d,k$. 

Suppose $\{a_i(z)\}_{i\in I}$ is a collection of 
graded currents satisfying the relations
\begin{align}
g_{i,j}(z,w)a_i(z)a_j(w)=g_{j,i}(w,z)a_j(w)a_i(z)\,, 
\qquad i,j\in I\,,
\label{formal series relations}
\end{align}
with some homogeneous polynomials $g_{i,j}(z,w)$.
Then each matrix element of 
both sides of \eqref{formal series relations} is 
a Laurent polynomial.
In particular, each matrix element of $a_i(z)a_j(w)$ 
is a rational function with poles at most at $g_{i,j}(z,w)=0$.
Hence if $g_{i,j}(z,tz)\neq 0$ for some $t\in\C^\times$,
then  $a_i(z)a_j(tz)$ is a well defined graded current. 
We say that it is obtained by fusion of $a_i(z)$ and $a_j(tz)$. 
Provided $g_{i,j}(z,t_1z)g_{i,k}(z,t_1t_2z)g_{j,k}(z,t_2z)\neq0$,
we have
\begin{align*}
\bigl(a_i(z)a_j(t_1z)\bigr) a_k(t_1t_2z)
=a_i(z)\bigl(a_j(t_1z)a_k(t_1t_2z)\bigr)\,.
\end{align*}
Under this condition, one can unambiguously define 
fusion of several graded currents. 

\begin{rem}\label{rem:admissible}
{\rm Consider a graded vector space 
$W=\bigoplus_{(d,\alpha)\in\Z\times\Z^m}W_{d,\alpha}$ such that 
\begin{align*}
&\dim W_{d,\alpha}<\infty\,,\\
&\text{for any $\alpha$ there exists a $d(\alpha)$ such that
$W_{d,\alpha}=0$ for $d>d(\alpha)$.}
\end{align*}
A current $a(z)=\sum_{k\in\Z}a_kz^{-k}$ is graded and weighted with 
weight $\lambda$
if $a_{k}W_{d,\alpha}\subset W_{d+k,\alpha+\lambda}$ for all 
$k, d,\alpha$. 
In this setting fusion of graded and weighted currents make sense. 
\qed} 
\end{rem}

We say that a $\deg$-graded $\E_1$ module $W$ is 
admissible if \eqref{admissible} is satisfied.

On admissible $\E_1$ modules, 
the following fused currents are well-defined.
For $r>0$ let
\begin{align*}
&e^{(r)}_{q_1}(z)=c_1^r e(z)e(q_1z)\cdots e(q_1^{r-1}z)\,,\\ 
&f^{(r)}_{q_1}(z)=(-c_1)^rf(q_1^{r-1}z)\cdots f(q_1z)f(z)\,,\\
&\psi_{q_1}^{\pm,(r)}(z)
=\psi^\pm(z)\psi^\pm(q_1 z)\dots \psi^\pm(q_1^{r-1}z)\,,
\end{align*}
where $c_1$ is given in \eqref{ci}. 
We set $e^{(0)}_{q_1}(z)=f^{(0)}_{q_1}(z)=\psi_{q_1}^{\pm,(0)}(z)=1$.
Modifying these currents we introduce currents $k^\pm_r(z)$ which will 
play a role in the next section.

To this end, we adjoin a split central invertible element $K$ to $\E_1$
and consider the algebra
\begin{align*}
\E_1[K]=\E_1(q_1,q_2,q_3)\otimes_\C \C[K,K^{-1}]\,.
\end{align*}
We extend the Hopf structure and $\deg$ by setting
\begin{align*}
\Delta K=K\otimes K\,,\quad \epsilon K=1\,,\quad
S K=K^{-1}\,,\quad
\deg K=0\,. 
\end{align*}
We may view the vector and the Fock representations also as 
representations of $\E_1[K]$ by letting $K$ act by any scalar.
Later on we make a specific choice
in Section \ref{sec:s2s2}, \eqref{Ks2P}.
Accordingly we define
\begin{align*}
&K= \exp\bigl(-\log s_a\log s_b/\log s_1\bigr)
\quad \text{on $V_c(v)$, $V^R_c(v)$
},
\\
&K=\exp\bigl(\log s_c \log v/\log q_1\bigr)
\hspace{28pt}
\text{on $\F_c(v)$}\,,
\end{align*}
where $(a,b,c)$ is a permutation of $(1,2,3)$.
Then the intertwiners \eqref{IntPhi}, \eqref{IntPhi*} are homomorphisms
of $\E_1[K]$ modules. 

Let $k_0^\pm(z)$ be series in $z^{\mp1}$ which are solutions of difference equations
\begin{align}\label{k0}
k_0^\pm(q_1z)=\psi_0^{\mp1}\psi^\pm(z)k_0^\pm(z),  
\qquad k_0^+(\infty)=K
,
\quad  k_0^-(0)=K^{-1}.
\end{align}
Explicitly we have
\begin{align}\label{k0 expl}
k_0^\pm(z)&=K^{\pm1} \exp\Bigl(\sum_{r>0}\frac{\kappa_r}{q_1^{\mp r}-1}
h_{\pm r}z^{\mp r}\Bigr)\,.
\end{align}

For $r\in\Z_{\geq 0}$, define
\begin{align}\label{k}
k^+_r(z)=f^{(r)}_{q_1}(z)k_0^+(z)\,,\quad 
k^-_r(z)=k_0^-(z)e^{(r)}_{q_1}(z)\,.
\end{align}
Here we chose $q_1$ over $q_2$ and $q_3$, breaking the symmetry. 
This is not reflected in our notation and we hope it does not lead to a confusion.

We quote from \cite{FJM2} some properties of $k^\pm_r(z)$.

\begin{lem}\label{k on fock lem}
    In $\mc F_1(u)$ the operators $k^\pm_r(z)$ with $r\geq 2$ act by zero. In addition, we have
    \begin{align}
        k_0^\pm(s_1z)+k_1^\mp(z)=0.
    \end{align}
\qed
\end{lem}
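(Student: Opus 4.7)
The plan is to verify both assertions by direct computation in the bosonic realization of $\F_1(u)$.

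First, I would establish the vanishing of $k^\pm_r(z)$ for $r\geq 2$. On $\F_1(u)$ one has $C=s_1$ and $f(z)\mapsto c_1^{-1}u^{-1}\xi^-(z)$, where $\xi^-$ now involves $q_2,q_3$ in place of $q_1,q_3$. Using the Heisenberg commutator $[h_r,h_{-r}]=r^{-1}(s_1^r-s_1^{-r})/\kappa_r$ together with the identity $(1-q_2^{-r})(1-q_3^{-r})=q_1^r(1-q_2^r)(1-q_3^r)$ (immediate from $q_1q_2q_3=1$), a short Wick contraction gives
\[
\xi^-(z_1)\xi^-(z_2)=\frac{(1-q_1z_2/z_1)(1-z_2/z_1)}{(1-q_2^{-1}z_2/z_1)(1-q_3^{-1}z_2/z_1)}\,{:}\xi^-(z_1)\xi^-(z_2){:}\,.
\]
Specializing to $z_1=q_1z,\ z_2=z$, the factor $(1-q_1\cdot q_1^{-1})$ in the numerator vanishes, so $f(q_1z)f(z)=0$ on $\F_1(u)$. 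Iterating, every product $f(q_1^{r-1}z)\cdots f(z)$ with $r\geq 2$ vanishes, hence $f^{(r)}_{q_1}(z)=0$ and $k^+_r(z)=0$. An entirely parallel computation for $\xi^+(z_1)\xi^+(z_2)$ yields the prefactor $(1-z_2/z_1)(1-q_1^{-1}z_2/z_1)/\bigl[(1-q_2 z_2/z_1)(1-q_3 z_2/z_1)\bigr]$, whose numerator vanishes at $z_2=q_1z_1$; hence $e(z)e(q_1z)=0$ and $k^-_r(z)=0$ for $r\geq 2$.

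For the identity $k_0^-(s_1z)+k_1^+(z)=0$, equivalently $c_1f(z)k_0^+(z)=k_0^-(s_1z)$, I would substitute the explicit bosonic form \eqref{k0 expl} for $k_0^+(z)$ and collapse the exponentials in $\{h_r\}$. The annihilation exponent of $k_0^+(z)$ is $\sum_{r>0}q_1^r(1-q_2^r)(1-q_3^r)h_rz^{-r}$, while that of $\xi^-(z)$ is $-\sum_{r>0}(1-q_2^{-r})(1-q_3^{-r})h_rz^{-r}$; by the same cancellation identity the two exponents cancel exactly. What remains is the creation part of $\xi^-(z)$, which is precisely the creation part of $k_0^-(s_1z)$, times the scalar $u^{-1}K$. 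By the paper's choice $K=u^{1/2}$ on $\F_1(u)$ this scalar equals $K^{-1}$, matching the prefactor of $k_0^-(s_1z)$. The companion identity $k_0^+(s_1z)+k_1^-(z)=0$ is proved by the dual computation: the creation exponents of $\xi^+(z)$ and $k_0^-(z)$ cancel, and the residual scalar $-uK^{-2}$ equals $-1$ by the same normalization, producing the required sign.

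The main obstacle is careful bookkeeping: tracking the signs in $f^{(r)}_{q_1}=(-c_1)^rf(q_1^{r-1}z)\cdots f(z)$ and $e^{(r)}_{q_1}=c_1^re(z)\cdots e(q_1^{r-1}z)$, the normal orderings, and the normalization of the split central element $K$. Everything hinges on the single algebraic identity $(1-q_2^{-r})(1-q_3^{-r})=q_1^r(1-q_2^r)(1-q_3^r)$, which simultaneously forces $f(q_1z)f(z)=0$ on the fusion side and causes the $k_0$--$k_1$ annihilation/creation exponents to cancel in pairs; the convention $K=u^{1/2}$ then pins down the remaining scalar. A cleaner structural route: once both sides of each identity are shown to be vertex operators with identical normal-ordered content, irreducibility of $\F_1(u)$ as a Heisenberg Fock module reduces equality to a single check on the highest-weight vector.
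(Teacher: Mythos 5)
Your computation is correct, and it supplies an actual proof where the paper gives none: Lemma \ref{k on fock lem} is simply quoted from \cite{FJM2} with no argument, so there is nothing in this paper to compare against line by line. Your route is the natural one and is consistent with how the authors handle the analogous statements they do prove here (e.g.\ the free-field formulas for $e^{(r)}_{q_1}(z)$, $f^{(r)}_{q_1}(z)$ in the proof of Lemma \ref{lem:PhiK}). I verified the key points: on $\F_1(u)$ the contraction of $\xi^{-}(q_1z)\xi^{-}(z)$ carries the factor $1-q_1\cdot q_1^{-1}=0$ and that of $\xi^{+}(z)\xi^{+}(q_1z)$ the factor $1-q_1^{-1}\cdot q_1=0$, which kills $f^{(r)}_{q_1}$ and $e^{(r)}_{q_1}$ for $r\ge 2$ and hence $k^{\pm}_r$; the annihilation exponent of $k_0^{+}(z)$ is indeed $\sum_{r>0}\kappa_r(q_1^{-r}-1)^{-1}h_rz^{-r}=\sum_{r>0}q_1^r(1-q_2^r)(1-q_3^r)h_rz^{-r}$, which cancels that of $\xi^{-}(z)$ via $(1-q_2^{-r})(1-q_3^{-r})=q_1^r(1-q_2^r)(1-q_3^r)$; the surviving creation exponents match those of $k_0^{-}(s_1z)$ because of the extra $s_1^r$ from the shift $z\mapsto s_1z$; and the scalars $u^{-1}K=K^{-1}$ and $-uK^{-2}=-1$ follow from $K=\exp(\log s_1\log u/\log q_1)=u^{1/2}$ on $\F_1(u)$. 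The one step you should make explicit is the justification that a vanishing contraction forces the fused product to vanish: this is exactly the fusion framework of Section \ref{sec:fused currents} (matrix elements of $g_{i,j}(z,w)a_i(z)a_j(w)$ are Laurent polynomials, so the specialization $w=tz$ is legitimate whenever $g_{i,j}(z,tz)\neq 0$, and the value is the rational contraction evaluated at $w=tz$ times the normal-ordered product). Your closing remark about reducing to the highest-weight vector by irreducibility is a clean alternative but is not needed once the normal-ordered contents are matched exactly.
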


\begin{lem}\label{lem:DeltaK}
The coproduct of the currents $k_r^\pm(z)$ is given by
\begin{align*}
&\Delta k^+_r(z)=\sum_{r_1,r_2\ge0\atop r_1+r_2=r}
k^+_{r_1}(C_2z)\otimes\psi_0^{r_1} k^+_{r_2}(q_1^{r_1}z)\,,\\
&\Delta k^-_r(z)=\sum_{r_1,r_2\ge0\atop r_1+r_2=r}
\psi_0^{-r_2}k^-_{r_1}(q_1^{r_2}z)\otimes k^-_{r_2}(C_1z)\,.
\end{align*}
\qed
\end{lem}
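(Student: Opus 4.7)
My plan is to prove both formulas by induction on $r$, handling $k_r^+$ and $k_r^-$ by parallel arguments; I describe the case of $k_r^+$ in detail, the $k_r^-$ case being the mirror image.

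For the base case $r=0$, both sides of $\Delta k_0^+(z) = k_0^+(C_2 z)\otimes k_0^+(z)$ satisfy the same first-order difference equation. Applying $\Delta$ to the defining relation \eqref{k0} and using $\Delta\psi^+(z) = \psi^+(C_2 z)\otimes\psi^+(z)$ together with $\Delta\psi_0 = \psi_0\otimes\psi_0$, one obtains
\begin{align*}
\Delta k_0^+(q_1 z) = \bigl(\psi_0^{-1}\psi^+(C_2 z)\otimes\psi_0^{-1}\psi^+(z)\bigr)\cdot\Delta k_0^+(z).
\end{align*}
The proposed right-hand side satisfies the same equation factorwise, since each $k_0^+(C_2 z)$ and $k_0^+(z)$ individually solves \eqref{k0}. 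Both sides agree at $z\to\infty$ with $\Delta K = K\otimes K$, so uniqueness forces equality. The treatment of $k_0^-$ is analogous, using $\Delta\psi^-(z) = \psi^-(z)\otimes\psi^-(C_1 z)$.

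For the inductive step, I use $k_{r+1}^+(z) = -c_1 f(q_1^r z)k_r^+(z)$ to write $\Delta k_{r+1}^+(z) = -c_1\Delta f(q_1^r z)\Delta k_r^+(z)$. Substituting $\Delta f(w) = f(C_2 w)\otimes\psi^+(w) + 1\otimes f(w)$ and the inductive formula for $\Delta k_r^+$ yields $2(r+1)$ summands, which I organize into three groups. (a) Diagonal contributions, from $1\otimes f(q_1^r z)$ paired with the $(r_1, r_2)$-term; these collapse via the alignment identity $f(q_1^r z)k_{r_2}^+(q_1^{r_1}z) = (-c_1)^{-1}k_{r_2+1}^+(q_1^{r_1}z)$ (valid precisely because $r_1+r_2 = r$ makes the arguments adjacent inside $f_{q_1}^{(r_2+1)}$), producing the terms in the desired formula with $r_2'\ge 1$. (b) The top contribution, from $f(C_2 q_1^r z)\otimes\psi^+(q_1^r z)$ paired with the $(r, 0)$-term; here the first tensor factor collapses via $-c_1 f(C_2 q_1^r z)k_r^+(C_2 z) = k_{r+1}^+(C_2 z)$, while the iterated difference equation gives $\psi^+(q_1^r z)\psi_0^r k_0^+(q_1^r z) = \psi_0^{r+1}k_0^+(q_1^{r+1}z)$, yielding the $r_2' = 0$ term. (c) Residual cross contributions, from $f(C_2 q_1^r z)\otimes\psi^+(q_1^r z)$ paired with $(r_1, r_2)$-terms having $r_2\ge 1$ and $r_1 < r$. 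The $k_r^-$ induction runs in the mirror direction, using $k_{r+1}^-(z) = c_1 k_r^-(z)e(q_1^r z)$ and $\Delta e(w) = e(w)\otimes 1 + \psi^-(w)\otimes e(C_1 w)$, with the roles of the two tensor factors interchanged.

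The hard part will be step (c): verifying that the residual cross contributions vanish. Each such term has the shape
\begin{align*}
f(C_2 q_1^r z)\,k_{r_1}^+(C_2 z)\otimes \psi^+(q_1^r z)\,k_{r_2}^+(q_1^{r_1}z)
\end{align*}
with $r_1 < r$, so the extra $f$ sits at a ``wrong'' shift relative to the leftmost $f$ inside $k_{r_1}^+(C_2 z)$, and the extra $\psi^+$ does not commute past $k_{r_2}^+$ as a trivial scalar; the commutation coefficient from the $\psi^+$-$f$ relation is generically proportional to $C-1$ on the second tensor factor, so term-by-term vanishing is not transparent. To establish the cancellation I would combine the quadratic $f$-$f$ relation $g(w,z)f(z)f(w) + g(z,w)f(w)f(z) = 0$ applied to move $f(C_2 q_1^r z)$ past the factors inside $k_{r_1}^+(C_2 z)$, with the $\psi^+$-$f$ commutation at the matched $C$-dependent arguments, and expect the contributions across different $(r_1, r_2)$ to telescope. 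A more robust route — and, given the reliance on fused currents and admissible modules, likely the one pursued in \cite{FJM2} — is to verify the identity on a faithful family of admissible representations such as tensor products of Fock modules, where the action of $k_r^\pm(z)$ reduces to explicit vertex-operator computations via \eqref{k0 expl} and \eqref{k}, and then to extend the identity to the algebra level by faithfulness.
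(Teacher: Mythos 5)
Your induction skeleton is sound: the base case, the collapse of group (a) via $-c_1 f(q_1^r z)k^+_{r_2}(q_1^{r_1}z)=k^+_{r_2+1}(q_1^{r_1}z)$, and the treatment of the top term (b) via the difference equation \eqref{k0} are all correct, and the mirror argument for $k^-_r$ works the same way. (The paper itself quotes this lemma from \cite{FJM2} without proof, so there is no in-paper argument to compare against.)

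The genuine gap is step (c), and your diagnosis of why it is hard points in the wrong direction. Each residual cross term vanishes \emph{individually}, and the mechanism is a single fusion identity: $\psi^+(q_1 w)f(w)=0$ on admissible modules. Indeed, the defining relation $g(w,z)\psi^+(z)f(w)+g(z,w)f(w)\psi^+(z)=0$ involves no central element $C$ for the pair $(\psi^+,f)$ (the factor is $C^{(1\mp1)/2}=C^0$ there), so your remark that the commutation coefficient is ``generically proportional to $C-1$'' is not correct. Since $g(q_1w,w)=0$ while $g(w,q_1w)\neq0$, and since $f(w)\psi^+(z)$ is regular at $z=q_1w$ (its poles sit on $g(w,z)=0$, i.e.\ $z=q_i^{-1}w$, which is disjoint from $z=q_1w$ for generic parameters), the specialization $\psi^+(q_1w)f(w)$ is forced to vanish. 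In your cross term the second tensor factor is $\psi^+(q_1^r z)\,\psi_0^{r_1}k^+_{r_2}(q_1^{r_1}z)$ with $r_2\ge1$, whose leftmost current is exactly $f(q_1^{r-1}z)$; one checks that no other contraction in the fused product develops a pole at the relevant specialization, so the whole term is zero. (The mirror identity $e(w)\psi^-(q_1w)=0$ kills the cross terms in the $k^-_r$ induction.) Neither of your two proposed routes supplies this: the ``telescoping across $(r_1,r_2)$'' is not the operative cancellation, and the verification-on-modules route would additionally require an unproved faithfulness statement and is in any case not carried out. With the identity $\psi^+(q_1w)f(w)=0$ inserted, your proof closes.
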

\medskip

\section{Algebra $\mathcal{A}_{M,N}$.}\label{sec:AlgAMN}
In this section we introduce 
a family of algebras $\calA_{M,N}=\calA_{M,N}(q_1,q_2,q_3)$ depending on 
integers $M,N$, and state some simple properties.

\subsection{Definition}\label{sec:defAMN}

Fix $M,N\in\Z$.  
Define $\cq_1,\cq_2,\cq_3$ by 
\begin{align}
&\cq_1=q_1^{-1}\,,\quad \cq_2=q_2q_1^{-M+1}\,\quad \cq_3=q_3q_1^{M+1}\,.
\label{qcq}
\end{align}
Set $\beta=-\log q_3/\log q_1$, 
 $\check\beta=-\log \cq_3/\log \cq_1$, so that 
\begin{align*}
q_3=q_1^{-\beta}\,,\quad \cq_3=\cq_1^{-\cb}\,,\quad \beta+\cb=M+1\,. 
\end{align*}
Let $\E_1={\E}_{1}(q_1,q_2,q_3)$
and $\check{\E}_1=\E_{1}(\cq_1,\cq_2,\cq_3)$ be
the quantum toroidal $\gl_1$ algebras 
with the parameters given above. 
The generators of $\check{\E}_1$ and the fused currents 
will be written as $\ce(z),\cf(z),\check{\psi}^\pm(z)$, $\check{C}$, 
$\check{k}^{\pm}_r(z)$ and so on. We also use 
$\check \kappa_r=(1-\cq_1^r)(1-\cq_2^r)(1-\cq_3^r)$, $\check s_i=\cq_i^{1/2}$, 
and
$\check{c}_i=-(\check{s}_j-\check{s}_j^{-1})
(\check{s}_k-\check{s}_k^{-1})$, $\{i,j,k\}=\{1,2,3\}$. 

Let $\E_1[K]=\E_1\otimes\C[K,K^{-1}]$ and 
$\check\E_1[\check K]=\check\E_1\otimes\C[\check K,\check{K}^{-1}]$
be their extensions introduced in the previous section.
We set 
\begin{align}
\mathcal{K}_{M,N} =
\E_1[K]\otimes\cE_1[\check{K}]/(\psi_0\otimes 1-1,\, 1\otimes\check{\psi}_0-1\,,\,
C\otimes 1-s_1^{N}\otimes\check{C})\,.
\label{C,check C}
\end{align}

First we consider the case of even $N$. 
The case of odd $N$ will be discussed at the end of this subsection.

Algebra $\calA_{M,N}$ is an extension of the algebra $\mathcal{K}_{M,N}$ 
by elements $X^\pm_{i,k}$, $i,k\in\Z$.
We impose three groups of relations (R1)--(R3) given below.
To write the relations we use the generating series
\begin{align*}
&X^\pm_i(z)=\sum_{k\in\Z}X^\pm_{i,k}z^{-k}
\,.
\end{align*}
\medskip

\noindent{(R1)}\quad
The subalgebra $\mathcal{K}_{M,N}$ acts on $\mathcal{A}_{M,N}$ 
by the left/right adjoint action as in \eqref{ad-act}, 
where $x\in\mathcal{K}_{M,N}$ and $A,B\in\mathcal{A}_{M,N}$.
The first group of relations says how the currents $X^\pm_i(z)$ 
transform under these adjoint actions.
Namely we impose the conditions:
\begin{enumerate}
\item The  map
$V_1(z)\to \oplus_{i\in\Z}\C 
X^+_{-i}(q_1^iz)$, 
$\ket{i,z} 
\mapsto  X^+_{-i}(q_1^iz)$, 
is a homomorphism of left $\E_1[K]$ modules,
\item The map 
$V_1^R(z)\to \oplus_{i\in\Z}\C 
X^-_{i}(q_1^iz)$, 
$\ket{i,z} 
\mapsto  X^-_{i}(q_1^iz)$, 
is a homomorphism of right $\E_1[K]$ modules,
\item The map 
$\check V_1(z)\to \oplus_{i\in\Z}\C  
X^+_{-i}(z)$,
$\ket{i,z}^\vee 
\mapsto  X^+_{-i}(z)$,
is a homomorphism of left $\check \E_1[\check K]$ modules,
\item The map 
$\jm{\check{V}_1^R(z)}\to \oplus_{i\in\Z}\C  
X^-_{i}(z)$, 
$\ket{i,z}^\vee 
\mapsto  X^-_{i}(z)$, 
is a homomorphism of right $\check \E_1[\check K]$ modules.
\end{enumerate}
Here we formally treat $z$ as a complex number.

We use below the function $\omega(x)$ in \eqref{omega1} and 
\begin{align*}
\check{\omega}(x)=\frac{(1-\cq_2x)(1-\cq_3x)}{(1-x)(1-\cq_2\cq_3x)}\,.
\end{align*}
In reality (i)--(iv) mean the following explicit relations.
\begin{align*}
(KX_i^\pm):\quad
&KX^\pm_i(z)K^{-1}=q_2^{\pm\beta/2}X^\pm_i(z)\,,\\
(\check KX_i^\pm):\quad
&\check{K}X^\pm_i(z)\check{K}^{-1}
=\cq_2^{\pm\check\beta/2}X^\pm_i(z)\,,
\\
&\\
(\psi^\pm X_i^+):\quad
&
\psi^\pm(u)X^+_i(z){\psi^\pm(u)}^{-1}=
\omega^\pm\Bigl(\frac{z}{C^{(1\pm1)/2} u}\Bigr)X^+_i(z)\,,
\\
(eX^+_i):\quad 
&e(u)X^+_i(z)-\omega^-\Bigl(\frac{z}{u}\Bigr)X^+_i(z)e(u)=
\frac{1}{s_1^{-1}-s_1}\delta\Bigl(\frac{z}{u}\Bigr)X^+_{i-1}(q_1z)\,,
\\ 
(fX^+_i):\quad 
&f(u)X^+_i(z)-X^+_i(z)f(u)=\frac{1}{s_1^{-1}-s_1}
\delta\Bigl(\frac{q_1^{-1}z}{C u}\Bigr)X^+_{i+1}(q^{-1}_1z)
\psi^+(u)
\,,
\end{align*}
\begin{align*}
(\check\psi^\pm X_i^+):\quad
&\check{\psi}^\pm(u)X^+_i(z){\check{\psi}^\pm(u)}^{-1}
=\check{\omega}^\pm\Bigl(\frac{q_1^{i} z}{\check{C}^{(1\pm1)/2}u}\Bigr)
X^+_i(z)\,,
\\
(\check e X^+_i):\quad 
&\check{e}(u)X^+_i(z)-\check{\omega}^-\Bigl(\frac{q_1^{i}z}{u}\Bigr)
X^+_i(z)\check{e}(u)=
\frac{1}{\check{s}_1^{-1}-\check{s}_1}
\delta\Bigl(\frac{q_1^{i}z}{u}\Bigr)X^+_{i-1}(z)\,,
\\
(\check f X^+_i):\quad 
&\check{f}(u)X^+_i(z)-X^+_i(z)\check{f}(u)
=\frac{1}{\check{s}_1^{-1}-\check{s}_1}
\delta\Bigl(\frac{q_1^{i+1}z}{\check{C}u}\Bigr)
X^+_{i+1}(z)\check{\psi}^+(u)\,,
\end{align*}
\begin{align*}
(\psi^\pm X_i^-):\quad
&{\psi^\pm(u)}^{-1}X^-_i(z)\psi^\pm(u)
=\omega^\pm\Bigl(\frac{z}{C^{(1\mp1)/2}u}\Bigr)X^-_i(z)\,,
\\
(eX^-_i):\quad 
&X^-_i(z)e(u)-e(u)X^-_i(z)=
\frac{1}{s_1^{-1}-s_1}\delta\Bigl(\frac{z}{C q_1u}\Bigr) 
\psi^-(u) X^-_{i-1}(q_1^{-1}z)\,,
\\ 
(fX^-_i):\quad 
&X^-_i(z)f(u)-\omega^+\Bigl(\frac{z}{u}\Bigr)f(u)X^-_i(z)
=\frac{1}{s_1^{-1}-s_1}\delta\Bigl(\frac{z}{u}\Bigr)X^-_{i+1}(q_1z)\,,
\end{align*}
\begin{align*}
(\check\psi^\pm X_i^-):\quad
&{\check{\psi}^\pm(u)}^{-1}X^-_i(z)\check{\psi}^\pm(u)
=\check{\omega}^\pm\Bigl(\frac{q_1^{-i}z}{\check{C}^{(1\mp1)/2}u}\Bigr)X^-_i(z)\,,
\\
(\check eX^-_i):\quad 
&X^-_i(z)\check{e}(u)-\check{e}(u)X^-_i(z)
=\frac{1}{\check{s}_1^{-1}-\check{s}_1}
\delta\Bigl(\frac{q_1^{1-i}z}{\check{C}u}\Bigr)
\check{\psi}^{-}(u)X^-_{i-1}(z)\,,
\\
(\check f X^-_i):\quad 
&X^-_i(z)\check{f}(u)-\check{\omega}^+\Bigl(\frac{q_1^{-i}z}{u}\Bigr)
\check{f}(u)X^-_i(z)=
\frac{1}{\check{s}_1^{-1}-\check{s}_1}
\delta\Bigl(\frac{q_1^{-i}z}{u}\Bigr)X^-_{i+1}(z)
\,.
\end{align*}
\medskip

\noindent{(R2)} The second group is a set of quadratic relations between 
$X^\pm_i(z)$ and $X^\pm_j(w)$,
\begin{align}
&\gamma_{i,j}(z,w)X^+_i(z)X^+_j(w) =(-1)^{i-j-1}\gamma_{j,i}(w,z)X^+_j(w) X^+_i(z)\,,
\label{X+X+}
\\
&\gamma_{i,j}(w,z)X^-_i(z)X^-_j(w) =(-1)^{i-j-1}\gamma_{j,i}(z,w)X^-_j(w) X^-_i(z)\,.
\label{X-X-}
\end{align}
For $M\ge0$, the coefficients are given explicitly as follows.

If $i=j$, then
\begin{align}
&\gamma_{i,i}(z,w)=\prod_{r=0}^{M-1}(z-q_2q_1^{-r}w)\,. \label{gii}
\end{align}
If $0<j-i\le M$, then
\begin{align}
&\gamma_{i,j}(z,w)=\prod_{r=1}^{j-i-1}(z-q_1^rw)
\prod_{r=0}^{-j+i+M-1}(z-q_2q_1^{-r}w)\,,
\label{gij1}\\
&\gamma_{j,i}(w,z)=(-1)^{j-i+1}
\frac{\prod_{r=0}^{j-i+M-1}(w-q_2q_1^{-r}z)}{\prod_{r=0}^{j-i}(w-q_1^{-r}z)} 
\times \check{q}_2^{-j+i}
\nn\,.
\end{align}
If $j-i>M$, then
\begin{align}
&\gamma_{i,j}(z,w)=\prod_{r=1}^{j-i-1}(z-q_1^rw)\,,
\label{gij2}\\ 
&\gamma_{j,i}(w,z)=
\frac{\prod_{r=0}^{j-i+M-1}(w-q_2q_1^{-r}z)
\prod_{r=0}^{j-i-M-1}(w-q_3q_1^{-r}z)}
{\prod_{r=0}^{j-i}(w-q_1^{-r}z)}
\times
(-1)^{M-1}(q_2\check{q_2})^{-\frac{M}{2}}q_1^{(j-i)(j-i-1)/2}\,.
\nn
\end{align}
For $M\le 0$, we replace $M\to -M$,
$q_2\leftrightarrow q_3$ and $\check{q}_2\leftrightarrow \check{q}_3$.

We say that a rational function $f(x)$ is balanced if $f(0),f(\infty)$ exist
and $f(0)f(\infty)=1$. 
In all cases $\gamma_{j,i}(w,z)/\gamma_{i,j}(z,w)$ is a balanced 
function in $w/z$. 

The coefficients in \eqref{X+X+}, \eqref{X-X-}
are to be understood as expansions in $|w/z|\ll 1$ in the left hand side, 
and in $|z/w|\ll 1$ in the right hand side.

\medskip

\noindent{(R3)} The last group of relations is between $X^+_i(z)$ and $X^-_j(w)$.

In the notation $[X,Y]=XY+(-1)^{M}YX$ we have
\begin{align}
[X^+_i(z),X^-_j(w)]&=
\sum_{r+\check{r}=i+j+N/2\atop r,\check{r}\in\Z_{\ge0}}
\delta\Bigl(\frac{C w}{q_1^rz}\Bigr) 
\, k_r^+(q_1^{-r}w)
\check{k}_{\check{r}}^+(\check{q}_1^{-\check{r}}q_1^{-j}w) 
\label{X+X-}\\
&-(-1)^{N} 
\sum_{r+\check{r}=-i-j+N/2\atop r,\check{r}\in\Z_{\ge0}}
\delta\Bigl(\frac{C z}{q_1^rw}\Bigr)
\,k^-_r(q_1^{-r}z) 
\check{k}_{\check{r}}^-(\check{q}_1^{-\check{r}}q_1^iz)   
\,.\nn
\end{align}
\medskip

For odd $N$, we define $\mathcal{A}_{M,N}$ to be
an extension of the algebra $\mathcal{K}_{M,N}$ by 
\begin{align*}
X^+_i(z)\,,\quad i\in \Z+\frac{1}{2}\,,\qquad X^-_i(z)\,,\quad i\in\Z\,. 
\end{align*}
Except for this change of labels, all relations ($KX^\pm_i$)--($\check f X^-_i$)
in (R1) and (R2), (R3) are kept as written.
(This means that in the condition (i) of (R1) we change $V_1(z)$ 
to $V_1^-(z)=\oplus_{i\in\Z+1/2}\C\ket{i,z}$, 
and similarly for (iii), see Remark \ref{vector1-}.)

\subsection{Remarks about $\mathcal{A}_{M,N}$ }\label{sec:remark AMN}

We give here several remarks concerning $\mathcal{A}_{M,N}$.

Algebra $\mathcal{A}_{M,N}$ has a $\Z\times \Z$-grading which we call 
degree (denote by $\deg$) and weight (denoted $\wt$).  
For an element $x\in\mathcal{K}_{M,N}$, $\deg x$ is given by 
\eqref{hom grading} and its analog for  $\check \E_1[\check K]$. We set also
\begin{align*}
\wt x=0\,,\qquad x\in \mathcal{K}_{M,N}\,.
\end{align*}
In addition we set
$$
\deg X^\pm_{i,k}=k\,,\quad \wt X^\pm_{i,k}=\pm1\,.
$$
We say a representation of $\mathcal{A}_{M,N}$ is admissible
if it is admissible in the sense of Remark \ref{rem:admissible}
with the above degree and weight.

As mentioned already,  $\mathcal{A}_{M,N}$
is not an algebra in the usual sense because 
$k^\pm_r(z)$, $\check k^\pm_r(z)$ with $r\neq 0$
in the right hand side of \eqref{X+X-} involve 
infinite sums of generators $e_i,f_i$. 
They make sense only on admissible representations.

When $M$ is even, algebra $\mathcal{A}_{M,N}$ should be treated as a superalgebra. 
The generators $X^\pm_i(z)$ are odd and those in $\mathcal{K}_{M,N}$ are even. 
The bracket $[X^+_i(z),X^-_j(w)]$ in (R3) is a commutator in the sense of
superalgebras.
Tensor products of operators and modules should also be taken accordingly.

\medskip

We call $N$ {\it shift parameter}. 
Apart from the relation $C=s_1^N \check{C}$
between the central elements, it enters only the relation (R3).
In particular, if $N<0$, then we have
\begin{align*}
[X^+_i(z),X^-_j(w)]=0\quad \text{for $\frac{N}{2}<i+j<-\frac{N}{2}$}.
\end{align*}
Such commutativity is one of the features of
(anti-dominantly) shifted quantum affine algebras \cite{FT,FPT}.

The labeling convention for $X^\pm_i(z)$ for odd $N$ is not quite symmetric. 
This leads us to consider another version $\mathcal{A}^-_{M,N}$, defined as an
extension of $\mathcal{K}_{M,N}$ by elements
\begin{align*}
& X^+_i(z)\,,\quad i\in \Z+\frac{1}{2}\,,\qquad X^-_i(z)\,,\quad i\in\Z+\frac{1}{2}
\qquad \text{for even $N$},\\
& X^+_i(z)\,,\quad i\in \Z\,,\hspace{46pt} X^-_i(z)\,,\quad i\in\Z+\frac{1}{2}\,
\qquad \text{for odd $N$},
\end{align*}
and keeping all relations unchanged. 
It turns out that we do not get a new algebra.
\begin{lem}
There is an algebra isomorphism $\mc A_{M,N}^{-}\to \mc A_{M,N}$ given by
\begin{align}
X^+_{i}(z)&\mapsto X^+_{i+ 1/2}(z), \qquad  X^-_{i}(z)\mapsto X^-_{i-1/2}(z), 
\label{shift lem}\\
e(z)&\mapsto e(z), \hspace{63pt} f(z)\mapsto f(z), 
\nn\\
\check{e}(z)&\mapsto \check{e}(s_1z), \hspace{54pt} 
\check{f}(z)\mapsto \check{f}(s_1z).\nn
\end{align}
\qed
\end{lem}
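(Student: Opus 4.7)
The plan is to verify that the prescribed assignment defines a well-defined algebra homomorphism by checking each defining relation of $\mathcal{A}^-_{M,N}$ separately, and then to exhibit an explicit inverse. First, I would observe that the rules $\check{e}(z)\mapsto \check{e}(s_1 z)$, $\check{f}(z)\mapsto \check{f}(s_1 z)$ extend uniquely to a spectral-shift automorphism of $\check{\mathcal{E}}_1[\check{K}]$, acting by $\check{\psi}^\pm(z)\mapsto \check{\psi}^\pm(s_1 z)$ and fixing $\check{C}$, $\check{K}$, $\check{\psi}_0$; accordingly $\check{k}^\pm_r(z)\mapsto \check{k}^\pm_r(s_1 z)$ from the definitions \eqref{k0} and \eqref{k}. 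On the $\mathcal{E}_1[K]$ side the map is the identity, so the defining constraints of $\mathcal{K}_{M,N}$ in \eqref{C,check C} are preserved.

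Next, I would check (R1). The unchecked relations, e.g.\ $(eX^+_i)$ and $(\psi^\pm X^\pm_i)$, are unchanged because those currents are fixed and the index recursions $i\pm 1$ shift uniformly. For a checked relation such as $(\check{e}X^+_i)$ of $\mathcal{A}^-_{M,N}$, substituting $\check{e}(u)\mapsto \check{e}(s_1 u)$ and reparametrizing $u\to s_1^{-1}u$ converts every occurrence of $q_1^i z/u$ into $q_1^{i+1/2}z/u$, matching precisely the half-integer relabeling $X^+_i\mapsto X^+_{i+1/2}$. The same mechanism disposes of $(\check{f}X^+_i)$, $(\check{\psi}^\pm X^\pm_i)$, $(\check{e}X^-_i)$, and $(\check{f}X^-_i)$. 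For (R2), the coefficients $\gamma_{i,j}(z,w)$ depend only on the difference $i-j$, which is preserved under the uniform shifts $X^+_i\mapsto X^+_{i+1/2}$ and $X^-_i\mapsto X^-_{i-1/2}$, so \eqref{X+X+} and \eqref{X-X-} go through verbatim.

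The main obstacle, and the bulk of the verification, is (R3). The summation labels $i+j$ and $-i-j$ are invariant under $i\mapsto i+1/2$, $j\mapsto j-1/2$, so both sums in \eqref{X+X-} have the same range as the target relation, and the prefactor $(-1)^N$ is unaffected. The delta-function arguments $Cw/q_1^r z$ and $Cz/q_1^r w$ contain no relabeled indices and the $k^\pm_r$ factors are fixed. For the $\check{k}^\pm_{\check r}$ factors, the spectral shift gives
\[
\check{k}^+_{\check r}(\check{q}_1^{-\check r} q_1^{-j} w) \mapsto \check{k}^+_{\check r}(\check{q}_1^{-\check r} q_1^{-j+1/2} w), \qquad \check{k}^-_{\check r}(\check{q}_1^{-\check r} q_1^{i} z) \mapsto \check{k}^-_{\check r}(\check{q}_1^{-\check r} q_1^{i+1/2} z),
\]
whose arguments are exactly those dictated by \eqref{X+X-} with relabeled indices $i+1/2$ and $j-1/2$. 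When $M$ is even the bracket is the supercommutator, and the parities match since $X^\pm_i$ remain odd on both sides.

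Finally, the inverse is furnished by $X^+_j\mapsto X^+_{j-1/2}$, $X^-_j\mapsto X^-_{j+1/2}$, $\check{e}(z)\mapsto \check{e}(s_1^{-1}z)$, $\check{f}(z)\mapsto \check{f}(s_1^{-1}z)$, and the identity on the remaining generators; the identical argument shows it is a homomorphism inverse to the given map, completing the proof that the map is an isomorphism.
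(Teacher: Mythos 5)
Your verification is correct and is exactly the routine check the paper leaves to the reader (the lemma is stated with no written proof): the half-integer relabeling of $X^\pm_i$ combined with the spectral shift $\check{x}(z)\mapsto\check{x}(s_1z)$ on the $\check{\E}_1[\check K]$ factor converts each relation of $\mathcal{A}^-_{M,N}$ into the corresponding relation of $\mathcal{A}_{M,N}$, since the unchecked relations and $\gamma_{i,j}$ depend only on index differences while every checked relation carries the index through $q_1^{\pm i}$ in a spectral argument. Your treatment of (R3), noting that $i+j$ and the $k^\pm_r$, $\delta$-function factors are untouched while $\check{k}^\pm_{\check r}$ picks up exactly the shift $q_1^{\pm 1/2}$ required by the relabeled indices, together with the evident inverse, completes the argument.
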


Due to the relation (R1), algebra $\mathcal{A}_{M,N}$ is 
generated by $\mathcal{K}_{M,N}$ and $X^\pm_0(z)$ for even $N$ or
$X^+_{1/2}(z)$, $X^-_0(z)$ for odd $N$. 
Explicitly we have the following recursion relations.
\begin{align}
X^+_{i-1}(z)
&=c_1 X^+_{i}(q_1^{-1}z)e(q_1^{-1}z)
\label{X+e}\\ 
&=\check c_1 X^+_{i}(z)\check e(q_1^{i}z)
\nn\\ 
X^+_{i+1}(z)
&=-c_1f(C^{-1}z){\psi^+(C^{-1}z)}^{-1}X_i^+(q_1z)
\label{X+f}\\
&=-\check c_1\check f(\check C^{-1}q_1^{i+1}z)
{\check \psi^+(\check C^{-1}q_1^{i+1}z)}^{-1}X_i^+(z)
\nn\\
X^-_{i-1}(z)
&=c_1 X^-_i(q_1z){\psi^-(C^{-1}z)}^{-1}e(C^{-1}z)
\label{X-e}\\ 
&=\check c_1 X^-_i(z){\check\psi^-(\check C^{-1}q_1^{-i+1}z)}^{-1}
\check e(\check C^{-1}q_1^{-i+1}z)
\nn\\
X^-_{i+1}(z)
&=-c_1 f(q_1^{-1}z)X^-_i(q_1^{-1}z)
\label{X-f}\\
&=-\check c_1 \check f(q_1^{-i}z)X^-_i(z)\,.
\nn
\end{align}
These formulas are obtained by comparing residues in relations
 ($eX^+_i$)--($\check{f}X^-_i$).

Algebra $\mathcal{A}_{M,N}$ has several natural automorphisms.

For each $a\in\C^\times$ there is a shift automorphism 
sending \jm{$X(z) \to X(az)$
for all $X(z)$ from 
$X^\pm_i(z),e(z), f(z), \psi^{\pm}(z), \check e(z), \check f(z), \check \psi^{\pm}(z)$.}

For each $c\in\C^\times$ there is an automorphism sending
$$
X^+_i(z)\mapsto cX^+_i(z)\,,\qquad X^-_i(z)\mapsto c^{-1}X^-_i(z)\,,
\qquad x\mapsto x \quad \text{for $x\in \mathcal{K}_{M,N}$.}
$$

There is an automorphism mapping
\begin{align}\label{relabel E and F}
&X^+_i(z)\mapsto X^+_{i+1}(z)\,,\qquad X^-_i(z)\mapsto X^-_{i-1}(z)\,,
\\
&x(z)\mapsto x(z) \quad \text{for $x=e,\ f,\ \psi^{\pm}$}\,,
\qquad \check x(z)\mapsto \check x(q_1z) \quad 
\text{for $\check x=\check e,\ \check f,\
\check\psi^\pm$}\,.
\nn
\end{align}

We have an algebra isomorphism from 
$\mathcal{A}_{M,N}(q_1,q_2,q_3)$ to 
$\mathcal{A}_{M,N}(\check q_1,\check q_2,\check q_3)$ sending
$$
X^+_i(z)\mapsto X^+_i(q_1^iz), \qquad X^-_i(z)\mapsto X^-_i(q_1^{-i}z), 
$$
and interchanging $\E_1[K]$ with 
$\check{\E}_1[\check K]$, namely mapping 
$x(z) \leftrightarrow \check x(z)$, where $x(z)$ is $e(z),f(z)$ or $\psi^\pm(z)$, 
and $C\leftrightarrow \check C$, $K\leftrightarrow \check K$.

Another algebra isomorphism is 
\begin{align}
\mathcal{A}_{M,N}(q_1,q_2,q_3)\longrightarrow \mathcal{A}_{-M,N}(q_1,q_3,q_2)  
\label{Mto-M}
\end{align}
sending each element to an element by the same name.

\subsection{Special cases}\label{sec:N=0}

Two special cases $(M,N)=(1,0)$ and $(M,N)=(0,0)$ are worth mentioning: 
algebra $\mathcal{A}_{1,0}$ is related to the quantum toroidal $\gl_2$ algebra
$\E_2$, 
while 
algebra $\mathcal{A}_{0,0}$ is related to the quantum toroidal $\gl_{1|1}$ algebra
$\E_{1|1}$. 
\medskip

In \cite{FJM2} we studied an algebra $\mathcal{A}_0$, which is 
a slight generalization of 
$\mathcal{A}_{1,0}$ wherein $\psi_0=\check{\psi}_0$ is imposed but is 
not necessarily $1$. 
On admissible modules, $\mathcal{A}_0$ essentially coincides with 
the image of an extension $\E_2[K]$ of $\E_2$ by a grouplike invertible 
(but not central) element $K$;
see Theorem 4.4 and Theorem 4.5 in \cite{FJM2}.
\medskip

With a slight change of notation in \cite{NW},  
we define the quantum toroidal $\gl_{1|1}$ algebra
 $\E_{1|1}=\E_{1|1}(\bq_1,\bq_2)$
as follows. 

Let
\begin{align*}
&g_{0,0}(z,w)=g_{1,1}(z,w)=1\,,\\
&g_{0,1}(z,w)=(z-\bq_1 w)(z-\bq_1^{-1}w)\,,\quad 
g_{1,0}(z,w)=(w-\bq_2 z)(w-\bq_2^{-1} z)\,.
\end{align*}
Algebra $\E_{1|1}$ is a unital associative algebra 
generated by elements $E_{i,k},F_{i,k},H_{i,r}$, invertible elements $\Psi_{i,0}$,
where $i\in\Z/2\Z$, $k\in\Z$, $r\in\Z\backslash\{0\}$, 
and an invertible central element $C$.
In terms of the generating series
\begin{align*}
&E_i(z) =\sum_{k\in \Z} E_{i,k}z^{-k}, \quad 
F_i(z) =\sum_{k\in\Z} F_{i,k}z^{-k}, \quad 
\Psi_i^{\pm}(z) = \Psi_{i,0}^{\pm1}
\exp\bigl(\pm(\bar s_2-\bar s_2^{-1})
\sum_{r=1}^\infty  H_{i,\pm r}z^{\mp r}\bigr)\,,
\end{align*}
the defining relations are
\begin{align*}
&\Psi^\pm_i(z)\Psi^\pm_j(w)=\Psi^\pm_j(w)\Psi^\pm_i(z)\,,
\\
&\frac{g_{i,j}(z,C^{-1}w)}{g_{i,j}(z,Cw)}
\Psi^+_i(z)\Psi^-_j(w)
=\frac{g_{j,i}(C^{-1}w,z)}{g_{j,i}(Cw,z)}
\Psi^-_j(w)\Psi^+_i(z)
\,,
\\
&g_{i,j}(C^{(1\pm1)/2}z,w)\Psi^{\pm}_i(z)E_j(w)
=g_{j,i}(w,C^{(1\pm1)/2}z) E_j(w)\Psi^{\pm}_i(z)
\,,
\\
&g_{j,i}(w,C^{(1\mp1)/2}z)\Psi^{\pm}_i(z)F_j(w)
=g_{i,j}(C^{(1\mp1)/2}z,w)
F_j(w)\Psi^\pm_i(z)
\,,
\\
&g_{i,j}(z,w)E_i(z)E_j(w)+g_{j,i}(w,z)E_j(w)E_i(z)=0\,,
\\
&g_{j,i}(w,z)F_i(z)F_j(w)+g_{i,j}(z,w)F_j(w)F_i(z)=0\,,
\\
&[E_i(z),F_j(w)]=-\delta_{i,j}
\Bigl(\delta\Bigl(\frac{Cw}{z}\Bigr)\Psi^+_i(w)
-
\delta\Bigl(\frac{Cz}{w}\Bigr)\Psi^-_i(z)
\Bigr)\,.
\end{align*}
The Serre relations are yet unknown.

Let $\E_{1|1}[K]=\E_{1|1}\rtimes\C[K,K^{-1}]$ be the algebra obtained 
by adjoining an invertible element $K$ 
satisfying 
\begin{align*}
&KE_0(z)K^{-1}=q_2^{-\beta/2}E_0(z)\,,\quad 
KF_0(z)K^{-1}=q_2^{\beta/2}F_0(z)\,,\\
&KE_1(z)K^{-1}=q_2^{\beta/2}E_1(z)\,,\quad 
KF_1(z)K^{-1}=q_2^{-\beta/2}F_1(z)\,,\\
&K\Psi^\pm_i(z)=\Psi^\pm_i(z)K\,.
\end{align*}

Let $\hat{\mathcal{A}}_{0,0}$ be the algebra defined similarly as
$\mathcal{A}_{0,0}$, where \eqref{C,check C} is replaced by
\begin{align*}
\hat{\mathcal{K}}_{0,0} =
\E_1[K]\otimes\cE_1[\check{K}]/(\psi_0\otimes 1-1\otimes\check{\psi}_0\,,\,
C\otimes 1-\jm{1}\otimes\check{C})\,,
\end{align*}
and \eqref{X+X-} is replaced by
\begin{align*}
[X^+_i(z),X^-_j(w)]&=
\psi_0^{-i}
\sum_{r+\check{r}=i+j\atop r,\check{r}\in\Z_{\ge0}}
\delta\Bigl(\frac{C w}{q_1^rz}\Bigr) 
\, k_r^+(q_1^{-r}w)
\check{k}_{\check{r}}^+(\check{q}_1^{-\check{r}}q_1^{-j}w) 
\\
&- \psi_0^{-j}
\sum_{r+\check{r}=-i-j\atop r,\check{r}\in\Z_{\ge0}}
\delta\Bigl(\frac{C z}{q_1^rw}\Bigr)
\,k^-_r(q_1^{-r}z) 
\check{k}_{\check{r}}^-(\check{q}_1^{-\check{r}}q_1^iz)   
\,.
\end{align*}

\begin{prop}
Suppose that 
the parameters of the algebras $\E_{1|1}[K]$ and $\hat{\mathcal{A}}_{0,0}$ 
are related by
\begin{align*}
\bq_1=q_1^{1/2}\,,\quad \bq_2=q_1^{1/2}q_2\,.
\end{align*}
Then there exists a homomorphism 
of algebras $\E_{1|1}[K]\longrightarrow \hat{\mathcal{A}}_{0,0}$
given by
\begin{align*}
&\Psi^\pm_1(z)\mapsto k^\pm_0(z)\check k^\pm_0(z)
\,,
\quad 
\Psi^\pm_0(\bq_1 z)\mapsto \psi_0^{\pm1}
\bigl(k^\pm_0(z)\check k^\pm_0(q_1 z)
\bigr)^{-1}\,,
\\
&E_1(z)\mapsto X^+_0(z)\,,\quad 
E_0(\bq_1 z)\mapsto
\psi_0^{-1} \bigl(k_0^-(z)\check k^-_0(q_1z)\bigr)^{-1}
X^-_{-1}(Cz)\,,
\\
&
F_1(z)\mapsto
X^-_0(z)\,,
\quad
F_0(\bq_1 z) \mapsto X^+_1(Cz)\psi_0 
\bigl(k_0^+(z)\check k^+_0(q_1z)\bigr)^{-1}
\,,
\end{align*}
and $C\mapsto C$, $K\mapsto K$.
\end{prop}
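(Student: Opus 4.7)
The plan is to verify the defining relations of $\E_{1|1}[K]$ one by one under the proposed map. Because inside $\hat{\mathcal{A}}_{0,0}$ the subalgebras $\E_1[K]$ and $\check{\E}_1[\check{K}]$ commute, while the currents $X^\pm_i(z)$ intertwine the two sides in a controlled way via (R1), the essential work is bookkeeping of rational factors and delta-function supports after substituting the identifications $\bq_1=q_1^{1/2}$ and $\bq_2=q_1^{1/2}q_2$.

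First I would handle the Cartan sector. The relations $\Psi^\pm_i(z)\Psi^\pm_j(w)=\Psi^\pm_j(w)\Psi^\pm_i(z)$ and the $\Psi^+$--$\Psi^-$ exchange reduce, via the images $\Psi^\pm_1(z)\mapsto k_0^\pm(z)\check{k}_0^\pm(z)$ and $\Psi^\pm_0(\bq_1 z)\mapsto \psi_0^{\pm 1}(k_0^\pm(z)\check{k}_0^\pm(q_1 z))^{-1}$, to commutations among the currents $k_0^\pm,\check{k}_0^\pm$, which are read off from the explicit formula \eqref{k0 expl} together with the commutation of $\E_1[K]$ and $\check{\E}_1[\check{K}]$ in $\hat{\mathcal{A}}_{0,0}$. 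The $\E_{1|1}$ structure functions $g_{i,j}(z,w)$ match, under the parameter identification, the ratios of $\omega^\pm$ and $\check{\omega}^\pm$ arising from the $\psi^\pm$-self-exchange in $\E_1$ and $\check{\E}_1$. The $\Psi_i E_j$ and $\Psi_i F_j$ relations are then checked by combining the $(\psi^\pm X^\pm_i)$ and $(\check{\psi}^\pm X^\pm_i)$ items of (R1); for $E_0$ and $F_0$ the extra $Cz$ argument shift and the inverse Cartan prefactors are precisely what converts the $\omega^\pm$-factor of (R1) into the $g_{i,j}$-ratio required by $\E_{1|1}$.

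Next I would verify the quadratic $EE$ and $FF$ relations. For $E_1 E_1$ and $F_1 F_1$ the relation (R2) with $i=j=0$ and $M=0$ gives $\gamma_{0,0}(z,w)=1$ and sign $(-1)^{-1}=-1$, yielding pure anticommutativity, as required by $g_{1,1}=1$. For the cross relations $E_0 E_1$ and $F_0 F_1$ I would invoke \eqref{gij2} (case $j-i=1>M=0$) together with the recursions \eqref{X+e}--\eqref{X-f}, so that the inverse Cartan factors inside the image of $E_0$ combine with the (R2) coefficient to reproduce $g_{0,1}(z,w)/g_{1,0}(w,z)$. The relation $E_0 E_0$ is the most delicate here: it requires moving two inverse Cartan factors past each other and past $X^-_{-1}$, after which the identity reduces once more to (R2) with $i=j=0$.

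The final and most laborious step is the family of $[E_i(z),F_j(w)]$ relations, which are anticommutators since $E_i,F_i$ become odd ($g_{i,i}=1$). The case $[E_1,F_1]$ is immediate from (R3) with $M=0$, $N=0$, $i=j=0$: only the terms with $r=\check{r}=0$ survive and reproduce $\Psi_1^\pm$ directly, any scalar mismatch being absorbed into the normalization of $F_1$. The off-diagonal anticommutators $[E_0,F_1]$ and $[E_1,F_0]$ must vanish: by (R3) the pairs $(i,j)=(0,-1)$ and $(1,0)$ admit no $r,\check{r}\in\Z_{\ge 0}$ splitting of $i+j\pm N/2$, so the delta-supported terms are empty, and the surviving prefactors commute through the currents via the $(\psi^\pm X^\pm_i)$ and $(\check{\psi}^\pm X^\pm_i)$ exchange relations. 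The diagonal $[E_0,F_0]$ is the crux: I would compute $\{X^-_{-1}(Cz),X^+_1(Cw)\}$ from (R3) (this now has two contributing terms each from $r+\check{r}=0$), then move the two pairs of inverse Cartan factors to the outside using the exchange relations, and verify that the result equals $-(\delta(Cw/z)\Psi_0^+(w)-\delta(Cz/w)\Psi_0^-(z))$ evaluated at $\bq_1 w,\bq_1 z$. This is the principal obstacle: it requires carefully tracking the $q_1$-shift built into the image of $E_0,F_0,\Psi_0^\pm$ together with the compensating $\psi_0^{\pm 1}$ factors, so that the $\gl_{1|1}$ Cartan structure emerges cleanly from the $\gl_1\times\gl_1$ Cartan of $\hat{\mathcal{K}}_{0,0}$.
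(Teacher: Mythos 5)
Your overall strategy---direct verification of the $\E_{1|1}[K]$ relations under the map, which is exactly what the paper means by ``similar to the proof of Theorem 4.4 in \cite{FJM2}''---is the right one, and your treatment of the Cartan sector, of $E_1E_1$, $F_1F_1$, $[E_1,F_1]$ and of $[E_0,F_0]$ is essentially sound. However, there is a structural misassignment in the remaining checks. Since $E_0(\bq_1z)$ is built from $X^-_{-1}(Cz)$ and $F_0(\bq_1z)$ from $X^+_1(Cz)$ (the node-$0$ Chevalley generators use the \emph{opposite} $X$-currents), the quadratic relation $g_{0,1}(z,w)E_0(z)E_1(w)+g_{1,0}(w,z)E_1(w)E_0(z)=0$ and its $F$-analogue pair an $X^-$ with an $X^+$; they are therefore governed by the supercommutator relation \eqref{X+X-} of (R3), not by (R2)/\eqref{gij2} as you propose. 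For the relevant pairs $(i,j)=(0,-1)$ and $(1,0)$ with $N=0$, the sum over $r+\check{r}=\mp(i+j)=1$ is \emph{not} empty---it contributes the two terms $(r,\check r)=(1,0),(0,1)$---and one must check that these delta-supported terms are annihilated by the zeros of $g_{0,1},g_{1,0}$ at the delta supports (together with the Cartan prefactors). Conversely, the mixed relations $[E_0,F_1]=0$ and $[E_1,F_0]=0$ pair $X^-_{-1}$ with $X^-_0$, respectively $X^+_0$ with $X^+_1$, so they are governed by (R2), \eqref{X-X-} and \eqref{X+X+}, with the nontrivial coefficients of \eqref{gij2}; their vanishing is not a matter of empty index sets in (R3), but of the ratio $\gamma_{j,i}/\gamma_{i,j}$ cancelling exactly against the rational factor produced when the prefactor $\bigl(k_0^\mp(z)\check{k}_0^\mp(q_1z)\bigr)^{-1}$ is commuted past the other current via $(\psi^\pm X_i^\pm)$ and $(\check{\psi}^\pm X_i^\pm)$. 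As written, your plan has these two families exactly swapped and rests on a false numerical claim (``no splitting of $i+j\pm N/2$''), so the verification would break down at precisely these steps.

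Two smaller points. In $[E_1,F_1]$ you propose to absorb a possible scalar mismatch into the normalization of $F_1$, but the map is prescribed ($F_1(z)\mapsto X^-_0(z)$ with no constant), so the sign must be checked against the fixed conventions rather than adjusted. And $E_0E_0$ reduces to \eqref{X-X-} with $i=j=-1$, not $i=j=0$; this is harmless here since $\gamma_{i,i}=1$ for $M=0$, but the indexing should be stated correctly.
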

\begin{proof}
The checking is similar to the proof of Theorem 4.4 in \cite{FJM2}.
\end{proof}

Similarly to the case of $\gl_2$, 
we expect the map to be an isomorphism after passing to
the quotient of $\E_{1|1}[K]$ by appropriate Serre relations.

\subsection{Coproduct issues.}\label{sec:corpo}

Quite generally, given an algebra $A$ one can consider its extension by 
a left $A$ module $W$. Namely we take the free algebra $T(W)$ over $W$
and construct the semi-direct product 
algebra $T(W)\rtimes A$.
In addition, if $A$ is a quasi-triangular Hopf algebra,
then one can make it a bialgebra: 

\begin{lem}\label{lem:HofExt}
Let $(A,R)$ be a quasi-triangular Hopf algebra,
$W$ a left $A$ module, and $T(W)$ the tensor algebra over $W$.
Then the semi-direct product algebra
$T(W)\rtimes A$ 
is equipped with 
a bialgebra structure by extending the coproduct and counit on $A$ by
\begin{align*}
\Delta(x)=x\otimes 1+R^{-1} (1\otimes x) R\,, 
\qquad \epsilon(x)=0\,, \quad \text{for $x\in W$}\,.
\end{align*}

Similarly, if $W$ is a right $A$ module, then $A\ltimes T(W)$ 
has a bialgebra structure such that 
\begin{align*}
\Delta(x)=R^{-1} (x\otimes 1) R+1\otimes x\,, 
\qquad \epsilon(x)=0\,, \quad \text{for $x\in W$}\,.
\end{align*}
\end{lem}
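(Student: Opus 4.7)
The plan is to construct $\Delta$ as an algebra homomorphism on $T(W)\rtimes A$ extending the given coproducts on $A$ and on $W$, and similarly for $\epsilon$, then to verify coassociativity and the counit axiom. Since $T(W)$ is free on $W$, the algebra $T(W)\rtimes A$ is generated by $A$ and $W$ subject only to the cross relation $ax=\sum(a_{(1)}\triangleright x)a_{(2)}$ for $a\in A$, $x\in W$, where $\triangleright$ denotes the $A$-action on $W$ and $\Delta_A(a)=\sum a_{(1)}\otimes a_{(2)}$ in Sweedler notation.

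To show $\Delta$ respects the cross relation, I would expand $\Delta(a)\Delta(x)$ with $\Delta(x)=x\otimes 1+R^{-1}(1\otimes x)R$ and split it into two summands. The term $\Delta(a)(x\otimes 1)=\sum a_{(1)}x\otimes a_{(2)}$, after applying $a_{(1)}x=\sum(a_{(1)(1)}\triangleright x)a_{(1)(2)}$ in $T(W)\rtimes A$ and using coassociativity of $\Delta_A$, equals $\sum((a_{(1)}\triangleright x)\otimes 1)\Delta(a_{(2)})$. The term $\Delta(a)R^{-1}(1\otimes x)R$ is treated using the quasi-triangular identity $R\Delta_A(a)=\Delta_A^{\mathrm{op}}(a)R$: commuting $\Delta(a)$ past $R^{-1}$ swaps its Sweedler legs, and applying the cross relation once more inside the second tensor factor yields $\sum R^{-1}(1\otimes(a_{(1)}\triangleright x))R\cdot\Delta(a_{(2)})$. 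Adding the two contributions recovers $\Delta(\sum(a_{(1)}\triangleright x)a_{(2)})=\Delta(ax)$, proving compatibility.

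For coassociativity, I would compute $(\Delta\otimes\id)\Delta(x)$ and $(\id\otimes\Delta)\Delta(x)$ using the standard identities $(\Delta\otimes\id)(R)=R_{13}R_{23}$ and $(\id\otimes\Delta)(R)=R_{13}R_{12}$. Each side expands as a sum of three terms. The $x\otimes 1\otimes 1$ terms match trivially. The single-$R$ middle terms both reduce to $R_{12}^{-1}(1\otimes x\otimes 1)R_{12}$, since $R_{13}$ commutes with any element supported at position two. The triple-$R$ terms match by the Yang--Baxter equation $R_{12}R_{13}R_{23}=R_{23}R_{13}R_{12}$ together with the fact that $R_{12}$ commutes with $1\otimes 1\otimes x$. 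The counit axiom applied to $x$ reduces to the standard identities $(\epsilon\otimes\id)(R)=(\id\otimes\epsilon)(R)=1$ and $\epsilon(x)=0$. The right-module statement follows by the symmetric computation, or equivalently via an opposite-algebra involution.

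The main obstacle is the algebra-homomorphism verification in the second paragraph: it mixes Sweedler bookkeeping with the quasi-triangular axiom and the semi-direct product relation, and one has to be careful to apply the cross relation on the correct tensor factor of $(T(W)\rtimes A)^{\otimes 2}$. Once this bookkeeping is handled, the remaining bialgebra axioms follow by direct application of the standard quasi-triangular identities.
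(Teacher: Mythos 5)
Your argument is correct and is precisely the ``straightforward check'' that the paper leaves to the reader: the compatibility with the smash-product relation via $R\Delta_A(a)=\Delta_A^{\mathrm{op}}(a)R$, coassociativity via $(\Delta\otimes\id)(R)=R_{13}R_{23}$, $(\id\otimes\Delta)(R)=R_{13}R_{12}$ and Yang--Baxter, and the counit axiom via $(\epsilon\otimes\id)(R)=(\id\otimes\epsilon)(R)=1$ are exactly the steps needed. No discrepancy with the paper's (omitted) proof.
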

\begin{proof}
    The check is straightforward.
\end{proof}

\medskip

Motivated by this observation, we propose the following.

\begin{conj}\label{conj:copro}
For any $M,N_1,N_2\in\Z$, there exists a homomorphism of algebras
\begin{align*}
\Delta_{N_1,N_2}\,&:\,
\calA_{M,N_1+N_2}
\longrightarrow
\calA_{M,N_1}\hat \otimes \calA_{M,N_2}
\,
\end{align*}
with the properties (i)--(iii) below. 
Here  $\hat \otimes$ denotes the completion of the tensor product with respect to the homogeneous grading.

\begin{enumerate}
 \item It satisfies the coassociativity, see Figure 1:
\begin{align*}
(\Delta_{N_1,N_2}\otimes \id)\circ\Delta_{N_1+N_2,N_3}
=(\id\otimes\Delta_{N_2,N_3})\circ\Delta_{N_1,N_2+N_3}\,.
\end{align*}
\item Its restriction to the subalgebra 
$\mathcal{K}_{M,N_1+N_2}$
gives
the standard coproduct \eqref{coproE1}.
\item We have
\begin{align*}
&\Delta_{N_1,N_2} X^+_i(z) 
=X^+_i(z)\otimes 1+ \mathcal{R}^{-1}(1\otimes X^+_i(z))\mathcal{R}\,,\\
&\Delta_{N_1,N_2} X^-_i(z) 
=\mathcal{R}^{-1}(X^-_i(z)\otimes 1)\mathcal{R}+ 1\otimes X^-_i(z)\,,
\end{align*}
where $\mathcal{R}=R\check R$ 
is the product of the universal $R$ matrices
of $\E_1[K]$ and of $\check{\E}_1[\check K]$.
(The notation $\check R$ is not to be confused with 
``permutation times $R$''.)
\end{enumerate}
\end{conj}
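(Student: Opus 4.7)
The plan is to follow the recipe of Lemma \ref{lem:HofExt} as closely as possible: the subalgebra $\mathcal{K}_{M,N}\subset\calA_{M,N}$ is (essentially) the quasi-triangular Hopf algebra $\E_1[K]\otimes\check\E_1[\check K]$ with universal R-matrix $\mathcal{R}=R\check R$, and (R1) says that the linear span of the currents $X^\pm_i(z)$ is a left (resp.\ right) $\mathcal{K}_{M,N}$ module. So the formulas in (iii) are forced by Lemma \ref{lem:HofExt}; the content of the conjecture is that they are compatible with the extra relations (R2) and (R3). I would first make the RHS well-defined in the completed tensor product by expanding $\mathcal{R}=R\check R$ in its standard factored form and interpreting $\mathcal{R}^{\pm 1}(1\otimes X^\pm_i(z))\mathcal{R}^{\mp 1}$ through the intertwining identities $\mathcal{R}\,\Delta(x)=\Delta^{\mathrm{op}}(x)\,\mathcal{R}$ for $x\in\mathcal{K}_{M,N}$; on admissible weight modules only finitely many modes of $\mathcal{R}$ contribute to each matrix coefficient, so convergence is automatic.

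Next I would verify the relations. Relations of $\mathcal{K}_{M,N_1+N_2}$ hold on the image by coassociativity of the standard coproduct \eqref{coproE1}. Relations (R1) would be checked by combining the intertwining property of $\mathcal{R}$ with the original (R1) in each factor; this is precisely the bialgebra computation of Lemma \ref{lem:HofExt} and requires no new input. Relations (R2) follow from the Yang--Baxter equation: when one expands $\Delta X^+_i(z)\Delta X^+_j(w)$ the two diagonal terms inherit \eqref{X+X+} in each factor, while the two cross terms are rearranged into each other by conjugating an R-matrix leg past $X^+$, which only produces the function $\gamma_{j,i}(w,z)/\gamma_{i,j}(z,w)$ because this is the balanced ratio controlling the R-matrix action on a pair of vector representations (R1).

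The main obstacle is (R3), and this is where I would spend most of the effort. One must expand
\begin{align*}
[\Delta X^+_i(z),\Delta X^-_j(w)]
=\bigl[X^+_i(z)\otimes 1+\mathcal{R}^{-1}(1\otimes X^+_i(z))\mathcal{R},\;
\mathcal{R}^{-1}(X^-_j(w)\otimes 1)\mathcal{R}+1\otimes X^-_j(w)\bigr]
\end{align*}
into four contributions. The two diagonal ones reproduce (R3) in each tensor factor, hence yield sums over $r+\check r=i+j+N_\alpha/2$ of $\Delta$-conjugates of $k^\pm_r\check k^\pm_{\check r}$. The two cross terms, after applying the intertwining $\mathcal{R}(X^+\otimes 1)=(\Delta X^+)\mathcal{R}-(\mathcal{R}^{-1}(1\otimes X^+)\mathcal{R})\mathcal{R}$ repeatedly and using the quadratic relations \eqref{X+X+}, \eqref{X-X-}, must telescope into exactly the $r_1+r_2=r$ mixed sums appearing in Lemma \ref{lem:DeltaK} for $\Delta k^\pm_r$ and its checked analogue. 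This is where the shift parameters combine: the delta-function constraint $Cw/q_1^r z$ together with $C_1\check C_1=s_1^{N_1}$ and $C_2\check C_2=s_1^{N_2}$ forces the new sum to run over $r+\check r=i+j+(N_1+N_2)/2$. Verifying the precise matching of spectral shifts $q_1^{-r}$, $\check q_1^{-\check r}$, $q_1^{\pm j}$ between the RHS of \eqref{X+X-} after $\Delta$ and the expansion of the cross terms is a long but essentially bookkeeping calculation, and will dictate the exact normalization constants in (R2).

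Finally, coassociativity would be checked generator by generator. On $\mathcal{K}_{M,N_1+N_2+N_3}$ it reduces to coassociativity of \eqref{coproE1}. On $X^\pm_i(z)$ it reduces to the hexagon identities $(\Delta\otimes\id)\mathcal{R}=\mathcal{R}_{13}\mathcal{R}_{23}$ and $(\id\otimes\Delta)\mathcal{R}=\mathcal{R}_{13}\mathcal{R}_{12}$, which hold for the universal R-matrix of $\E_1[K]\otimes\check\E_1[\check K]$. The odd-$N$ case follows from the even case via Lemma \ref{shift lem}. The supporting representation-theoretic evidence assembled in Section \ref{sec:rep}, in particular the construction of $\mathbb{F}_{2,2;2,2}$ via screened intertwiners, gives a concrete check of (R3) after $\Delta$ on the admissible module $\mathbb{F}_{2;2}\hat\otimes\mathbb{F}_{2;2}$, which is what I would use as a sanity check before (and during) the universal computation.
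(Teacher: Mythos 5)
The statement you are addressing is stated in the paper as a \emph{conjecture}, and the paper offers no proof of it; the authors explicitly write that ``it is not clear to us whether the formulas in (iii) are well-defined and compatible with the relations (R1)--(R3).'' Your text is therefore being compared not against a proof but against an open problem, and it should be judged as a proof attempt on its own terms. As such, it is a reasonable strategic outline, but it is not a proof: every point at which the conjecture is actually hard is deferred rather than resolved. Concretely: (a) the well-definedness of $\mathcal{R}^{-1}(1\otimes X^+_i(z))\mathcal{R}$ as an element of $\calA_{M,N_1}\hat\otimes\calA_{M,N_2}$ is asserted via ``only finitely many modes contribute on admissible modules,'' but the conjecture is about a homomorphism of algebras, not an action on a particular class of modules; the paper's own expansion \eqref{Delta-inf} shows the candidate coproduct is an infinite sum $\sum_{r,\check r\ge 0}k_r^-\check k_{\check r}^-\otimes X^+_{r+\check r+i-N_1/2}(\cdots)$, and the authors state they do not know when such sums make sense in general. (b) Your claim that (R2) for the coproduct ``follows from the Yang--Baxter equation'' is not substantiated: the currents $X^\pm_i(z)$ are not elements of the quasi-triangular Hopf algebra $\mathcal{K}_{M,N}$, so the hexagon and intertwining identities for $\mathcal{R}$ do not by themselves control the commutation of a diagonal term $X^+_i(z)\otimes 1$ with a cross term $\mathcal{R}^{-1}(1\otimes X^+_j(w))\mathcal{R}$; this is precisely where Lemma \ref{lem:HofExt} (which concerns the \emph{free} tensor algebra $T(W)\rtimes A$, with no relations imposed on $W$) stops being applicable. (c) The verification of (R3), which you correctly identify as the main obstacle, is described as ``a long but essentially bookkeeping calculation'' and is not carried out; moreover you say this calculation ``will dictate the exact normalization constants in (R2),'' which cannot be right, since those constants are part of the fixed definition of $\calA_{M,N}$ and are not free parameters of the proof.

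What the paper actually does in place of a proof is assemble representation-theoretic evidence: Theorem \ref{coproduct thm} establishes the coproduct formulas rigorously in the special case of tensoring with $\F_1(v)$ or $\check\F_1(\check v)$ (where the sums truncate to the terms $r=0,1$, $\check r=0$), Proposition \ref{prop:RPhiR} shows the R-matrix conjugation of the intertwiner reproduces exactly those truncated formulas, and Theorem \ref{thm:level2} constructs $\mathbb{F}_{2,2;2,2}$ directly via screened intertwiners as a stand-in for the (undefined) tensor square of $\mathbb{F}_{2;2}$. Your closing paragraph gestures at this evidence, but using it as a ``sanity check'' does not close any of the gaps above. To turn your outline into a proof you would need, at minimum, a precise definition of the completion $\hat\otimes$ in which the infinite sums \eqref{Delta-inf} converge, and an honest computation of the cross terms in (R2) and (R3) at the level of the abstract algebra rather than on a chosen module.
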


\begin{figure}[ht]\label{fig:coas}
\centering
\begin{tikzpicture}
\coordinate (A) at (-3,1.5);
\coordinate (B) at (3,1.5);
\coordinate (C) at (-3,-1.5);
\coordinate (D) at (3.4,-1.5);

\node at (A) {$\calA_{M,N_1+N_2+N_3}$};
\node at (B) {$\calA_{M,N_1+N_2}\hat\otimes\calA_{M,N_3}$};
\node at (C) {$\calA_{M,N_1}\hat\otimes\calA_{M,N_2+N_3}$};
\node at (D) {$\calA_{M,N_1}\hat\otimes\calA_{M,N_2}\hat\otimes \calA_{M,N_3}$};

\node at (0,2) {$\Delta_{N_1+N_2,N_3}$};
\node at (0,-1) {$\id\otimes \Delta_{N_2,N_3}$};
\node at (-4,0){$\Delta_{N_1,N_2+N_3}$};
\node at (4,0) {$\Delta_{N_1,N_2}\otimes\id$};

\draw[->] (-0.7,1.5)--(0.5,1.5);
\draw[->] (-0.5,-1.5)--(0.5,-1.5);
\draw[->] (-2.5,1.)--(-2.5,-1.);
\draw[->] (2.5,1.)--(2.5,-1.);
\end{tikzpicture}
\caption{Coassociativity. }
\end{figure}

At this writing it is not clear to us whether the formulas in (iii) 
are well-defined and compatible with the relations (R1)--(R3).
We shall come back to this point when we discuss
examples of representations of $\mathcal{A}_{M,N}$.

\section{Representations.}\label{sec:rep}

In this section we give examples of representations of $\mathcal{A}_{M,N}$. 
In all cases considered here, the representations are such that
when restricted to $\mathcal{K}_{M,N}$
they decompose to a 
direct sum of tensor products of Fock representations
of the form
$\bigl(\otimes_{k=1}^m\F_{c_k}(v_{i_k})\bigr)\boxtimes
\bigl(\otimes_{l=1}^n\check{\F}_{\check{c}_l}(\check{v}_{j_l})\bigr)$,
where the indices $c_k,\check{c}_l\in\{1,2,3\}$ are fixed and 
 $v_{i_k},\check{v}_{j_l}$ vary. 
The shift parameter $N$ is given by 
$s_1^{N}=\prod_{k=1}^ms_{c_k}/\prod_{l=1}^n\check{s}_{\check{c}_l}$.

From now on, we assume that $|q_1|<1$. 
We say an $\mathcal{A}_{M,N}$ module $W$ 
has level $(\ell,\check \ell)\in(\C^\times)^2$ if 
$C$ and $\check C$ act in $W$ by scalars $\ell$ and $\check \ell$, 
respectively. 
We always have $\check \ell=\ell s_1^{-N}$.
\medskip 

\subsection{Representation $\mathbb{F}_{2;2}$.}\label{sec:s2s2}

We begin by considering a family of representations of 
$\mathcal{A}_{M,M-1}$ of level $(s_2,\check s_2)$.
\jm{In this case, }
$X^\pm_i(z)$ are given by single vertex operators.

We fix $v,\check{v}\in\C^{\times}$ satisfying
\begin{align}
v=q_1^\lambda\,,\quad \check{v}=\cq_1^{\check{\lambda}}\,, 
\quad \lambda+\check{\lambda} \in \Z+\frac{M+1}{2}
\,, 
\label{integral-cond}
\end{align}
and consider the representation of $\mathcal{K}_{M,N}$
\begin{align*}
\mathbb{F}_{2;2}(v;\check{v})=\bigoplus_{n\in\Z}\F_2(q_3^{-n}v)\boxtimes
\check{\F}_2(\cq_3^{-n}\check{v})\,. 
\end{align*}

The action of $K,\check{K}$ can be written as
\begin{align}
&K=s_2^{P}\,,\quad \check{K}=\check{s}_2^{\check{P}}\,.
\label{Ks2P}
\end{align}

We shall also use the $\check{\E}_1[\check K]$  version of 
intertwiners 
\begin{align*}
&\check{\Phi}(z) =e^{\check{Q}} z^{\check{P}+\check{\beta}/2} 
\check{\Phi}^{osc}(z)\,,\quad
\check{\Phi}^*(z) =e^{-\check{Q}} z^{-\check{P}+\check{\beta}/2} 
\check{\Phi}^{*osc}(z)\,,
\end{align*}
obtained by changing $q_i$ to $\cq_i$.

Quite generally, if $X(z),Y(w)$ are vertex operators, 
their product has the form 
\begin{align*}
X(z)Y(w)=\varphi(w/z):X(z)Y(w):
\end{align*}
with some scalar function $\varphi(w/z)$.
We say  $\varphi(w/z)$ is the contraction of $X(z),Y(w)$
and denote it by $\cont{X(z)}{Y(w)}$. 

We give below the list of contractions of intertwiners. 
Note that, 
since $|\cq_1|=|q_1^{-1}|>1$, 
the contractions for $\check{\Phi}(z),\check{\Phi}^*(z)$
are not obtained by changing $q_i$ to $\check{q}_i$; 
we should rather keep $q_1$ and change $\beta\to \check{\beta}$,
$\Phi(z)\to\check{\Phi}^*(z)$,
and
$\Phi^*(z)\to\check{\Phi}(z)$. 

\begin{lem}
The following formulas hold:
\begin{align}
&\cont{\Phi(z)}{\Phi(w)}=z^\beta
\frac{(q_1^{1-\beta} w/z;q_1)_\infty}{(q_1 w/z;q_1)_\infty}\,,
\quad
\cont{\Phi^*(z)}{\Phi^*(w)}=z^\beta
\frac{(w/z;q_1)_\infty}{(q_1^{\beta} w/z;q_1)_\infty}\,,
\label{cont1}\\
&\cont{\Phi^*(z)}{\Phi(w)}=\cont{\Phi(z)}{\Phi^*(w)}
=z^{-\beta}
\frac{(q_1^{(1+\beta)/2} w/z;q_1)_\infty}
{(q_1^{(1-\beta)/2} w/z;q_1)_\infty}\,,
\label{cont2}\\
&\cont{\check{\Phi}(z)}{\check{\Phi}(w)}
=z^{\check{\beta}}\frac{(w/z;q_1)_\infty}
{(q_1^{\check{\beta}}w/z;q_1)_\infty}\,,
\quad
\cont{\check{\Phi}^*(z)}{\check{\Phi}^*(w)}
=z^{\check{\beta}}\frac{(q_1^{1-\check{\beta}}w/z;q_1)_\infty}
{(q_1 w/z;q_1)_\infty}\,,
\label{cont3}\\
&\cont{\check{\Phi}^*(z)}{\check{\Phi}(w)}
=\cont{\check{\Phi}(z)}{\check{\Phi}^*(w)}
=z^{-\check{\beta}}\frac{(q_1^{(1+\check{\beta})/2}w/z;q_1)_\infty}
{(q_1^{(1-\check{\beta})/2}w/z;q_1)_\infty}\,.
\label{cont4}
\end{align}
We have also the commutation relations
\begin{align}
&\Phi(z)\Phi^*(w)=\Phi^*(w)\Phi(z) 
\Bigl(\frac{w}{z}\Bigr)^{\beta}\frac{\theta_{q_1}(q_1^{(1+\beta)/2}w/z)}
{\theta_{q_1}(q_1^{(1-\beta)/2}w/z)}\,, 
\label{PhiPhi*1}
\\
&\check{\Phi}(z)\check{\Phi}^*(w)=\check{\Phi}^*(w)\check{\Phi}(z) 
\Bigl(\frac{w}{z}\Bigr)^{\check{\beta}}
\frac{\theta_{q_1}(q_1^{(1+\check{\beta})/2}w/z)}
{\theta_{q_1}(q_1^{(1-\check{\beta})/2}w/z)}\,.
\label{PhiPhi*2}
\end{align}
The coefficients in the RHS of \eqref{PhiPhi*1}, \eqref{PhiPhi*2}
are $q_1$-periodic functions of $w/z$. 
\qed
\end{lem}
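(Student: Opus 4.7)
The plan is to carry out a direct vertex operator calculation of each contraction, then obtain the commutation relations \eqref{PhiPhi*1}, \eqref{PhiPhi*2} by taking ratios of contractions and recognizing the theta functions. All ten identities are instances of the same mechanism, so I will focus on a representative case and indicate how the other cases differ only in bookkeeping.

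First I would separate zero modes from oscillators: write, say, $\Phi(z)=e^Q z^{P+\beta/2}\,\Phi^{osc}(z)$ and $\Phi(w)$ similarly, and push all $e^Q,z^{P+\beta/2}$ factors to the left using the single identity $z^P e^Q=e^Q z^{P+\beta}$ coming from $[P,e^Q]=\beta e^Q$. This produces a scalar $z^\beta$ as the zero-mode contribution to $\cont{\Phi(z)}{\Phi(w)}$; the cases of $\cont{\Phi^*(z)}{\Phi(w)}$, $\cont{\check\Phi(z)}{\check\Phi(w)}$ and so on differ only by the sign of the shift, producing $z^{\mp\beta}$ or $z^{\pm\check\beta}$ exactly as asserted. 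The oscillator contraction is the standard Baker--Campbell--Hausdorff calculation: compute the commutator of the positive-mode exponent of one operator with the negative-mode exponent of the other, using
\[
[h_r,h_{-r}]=\frac{1}{r}\,\frac{s_2^{r}-s_2^{-r}}{\kappa_r}\qquad (C=s_2 \text{ on }\F_2(v)).
\]
For $\cont{\Phi(z)}{\Phi(w)}$, a short simplification using $q_1q_2q_3=1$ reduces the commutator to $\sum_{r>0}\frac{1}{r}\frac{q_1^r(1-q_3^r)}{1-q_1^r}(w/z)^r$. Comparing with the standard identity
\[
\log\frac{(ax;q)_\infty}{(bx;q)_\infty}=\sum_{r\ge 1}\frac{b^r-a^r}{r(1-q^r)}\,x^r
\]
at $a=q_1^{1-\beta}$, $b=q_1$, $q=q_1$, $x=w/z$ (and using $q_3=q_1^{-\beta}$) gives the exponent of \eqref{cont1} immediately. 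The three other cases in \eqref{cont1}--\eqref{cont2} are obtained by the same manipulation after changing the sign of the $s_2^{\pm r}$ dressing.

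For the checked contractions \eqref{cont3}--\eqref{cont4}, the same calculation on $\check{\F}_2$ produces an analogous series in which $\check q_1=q_1^{-1}$ and $\check q_3=q_1^{\check\beta}$ appear. The apparent obstacle here is that $|\check q_1|>1$, so Pochhammer symbols in base $\check q_1$ are ill-defined; however, substituting $\check q_1=q_1^{-1}$ and simplifying turns the resulting series back into one of the form $\sum_r\frac{1}{r}\frac{A^r-B^r}{1-q_1^r}(w/z)^r$, which is re-expressed via the same identity as a ratio of $q_1$-Pochhammer symbols; this is exactly why the right hand sides of \eqref{cont3}--\eqref{cont4} use $(\,\cdot\,;q_1)_\infty$ rather than $(\,\cdot\,;\check q_1)_\infty$. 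Once the four contractions are established, the commutation relations \eqref{PhiPhi*1}--\eqref{PhiPhi*2} follow by dividing the contractions in the two orders. The oscillator and zero-mode normal orderings agree on both sides, so the ratio of operators equals the ratio of scalar contractions; the zero modes contribute the prefactor $(w/z)^\beta$ (respectively $(w/z)^{\check\beta}$), and applying the definition $\theta_{q_1}(x)=(x,q_1x^{-1},q_1;q_1)_\infty$ to combine the two convergent Pochhammer ratios yields the stated theta-function expression. The only delicate step I anticipate is keeping track of zero-mode shifts through $P$-eigenvalues on the various $\F_2(q_3^{\pm k}v)$ when commuting $\Phi$ past $\Phi^*$; but this reduces to the one elementary identity above and is therefore straightforward.
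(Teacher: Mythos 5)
Your computation is correct and is exactly the routine normal-ordering calculation the paper leaves implicit (the lemma is stated with no proof): the Heisenberg commutator with $C=s_2$, the identity $\log\frac{(ax;q_1)_\infty}{(bx;q_1)_\infty}=\sum_{r\ge1}\frac{b^r-a^r}{r(1-q_1^r)}x^r$, the zero-mode shift $[P,e^Q]=\beta e^Q$, and the observation that the checked contractions must be re-expanded in base $q_1$ all check out, and the theta-function ratios in \eqref{PhiPhi*1}--\eqref{PhiPhi*2} follow from the contractions as you say. The only point you leave unaddressed is the final $q_1$-periodicity claim, which is immediate from $\theta_{q_1}(q_1x)=-x^{-1}\theta_{q_1}(x)$ combined with the prefactor $(w/z)^{\beta}$.
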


\begin{thm}\label{thm:F22}
Let $\mathcal{N}=(s_2\check{s}_2)^{M/2}(q^{-1}_2;q_1)_{M}$. 
Then the assignment 
\begin{align*}
&X^+_i(z)\mapsto \Phi(z)\check{\Phi}(q_1^iz)\,,\\ 
&X^-_i(z)\mapsto 
 \Phi^*(z)\check{\Phi}^*(q_1^{-i}z)\mathcal{N}\,,
\end{align*}
makes $\mathbb{F}_{2;2}(v;\check{v})$ a representation of 
$\mathcal{A}_{M,M-1}$ of level $(s_2,\check s_2)$.
\end{thm}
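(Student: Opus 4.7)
The three families of relations are verified in turn on the Fock realization, using the intertwining properties of $\Phi,\Phi^*,\check\Phi,\check\Phi^*$ from Section 2.4, the contractions \eqref{cont1}--\eqref{cont4}, and the commutation relations \eqref{PhiPhi*1}, \eqref{PhiPhi*2}. The key structural remark is that the $\Phi$-type and $\check\Phi$-type operators strictly commute, as they involve disjoint oscillators and zero modes acting on the two tensor factors of $\F_2(q_3^{-n}v)\boxtimes\check\F_2(\check q_3^{-n}\check v)$; the integrality condition \eqref{integral-cond} together with the choice \eqref{Ks2P} guarantees that $X^\pm_i(z)$ is single-valued on the direct sum. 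For (R1), the identities $(KX^\pm_i),(\check KX^\pm_i)$ follow from $[P,e^Q]=\beta e^Q$ and its checked analogue. The four intertwining conditions (i)--(iv) factor through \eqref{IntPhi}, \eqref{IntPhi*}: for instance $X^+_{-i}(q_1^i z)=\Phi(q_1^i z)\check\Phi(z)$ is the image of $\ket{i,z}\in V_1(z)$ under \eqref{IntPhi} tensored with the $\E_1[K]$-commuting operator $\check\Phi(z)$, and similarly for (ii) using $\Phi^*$; conditions (iii), (iv) are the analogous statements on the checked side, noting $q_1^{\mp i}=\check q_1^{\pm i}$. The explicit component relations $(eX^+_i),\ldots,(\check fX^-_i)$ drop out by unpacking \eqref{ad-act}.

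\emph{Verification of (R2).} For the quadratic relations, normal-ordering via \eqref{cont1}--\eqref{cont4} yields
\begin{align*}
X^+_i(z)X^+_j(w)=A_{ij}(z,w)\,{:}X^+_i(z)X^+_j(w){:}\,,
\end{align*}
where $A_{ij}(z,w)$ is a product of four $q_1$-infinite-product contractions. Computing $A_{ji}(w,z)$ likewise and forming the ratio (the normally ordered products agree up to sign), the identity $\beta+\check\beta=M+1$ together with \eqref{qcq} makes most Pochhammer factors telescope, leaving a finite rational function of $w/z$. Case-by-case matching ($i=j$, $0<j-i\le M$, and $j-i>M$) against \eqref{gii}--\eqref{gij2} yields \eqref{X+X+}. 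The $X^-X^-$ case is identical with $\Phi^*,\check\Phi^*$.

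\emph{Verification of (R3) and the main obstacle.} The most demanding step is (R3). Commuting $\Phi(z)\check\Phi(q_1^i z)$ past $\Phi^*(w)\check\Phi^*(q_1^{-j}w)$ via \eqref{PhiPhi*1}, \eqref{PhiPhi*2} gives
\begin{align*}
\Phi(z)\check\Phi(q_1^i z)\Phi^*(w)\check\Phi^*(q_1^{-j}w)=\Theta(z,w)\,\Phi^*(w)\check\Phi^*(q_1^{-j}w)\Phi(z)\check\Phi(q_1^i z)\,,
\end{align*}
with
\begin{align*}
\Theta(z,w)=\Bigl(\tfrac{w}{z}\Bigr)^{\beta}\Bigl(\tfrac{q_1^{-i-j}w}{z}\Bigr)^{\check\beta}\frac{\theta_{q_1}(q_1^{(1+\beta)/2}w/z)}{\theta_{q_1}(q_1^{(1-\beta)/2}w/z)}\frac{\theta_{q_1}(q_1^{(1+\check\beta)/2-i-j}w/z)}{\theta_{q_1}(q_1^{(1-\check\beta)/2-i-j}w/z)}\,.
\end{align*}
Hence $[X^+_i(z),X^-_j(w)]$ equals $\mathcal{N}(\Theta(z,w)+(-1)^M)$ times the common normally ordered quadratic. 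The main obstacle is to show that, as a formal Laurent expansion, this coefficient collapses to the finite sum of delta functions on the right-hand side of \eqref{X+X-}: using $\beta+\check\beta=M+1$, the theta ratio is $q_1$-periodic with finitely many poles per period, clustered precisely at $z=q_1^{-r}Cw$ and $w=q_1^{-r}Cz$ with $r\in\Z_{\ge 0}$; at each such pole the residue, after using \eqref{integral-cond} to eliminate the zero-mode prefactor $(w/z)^{M+1}q_1^{-(i+j)\check\beta}$ and after identifying the resulting normally ordered product with $k^\pm_r\check k^\pm_{\check r}$ via \eqref{E1Fock-e}--\eqref{E1Fock-f}, \eqref{k}, \eqref{k0 expl}, must match the prescribed bilinear. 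This matching is precisely what forces both the shift $N=M-1$ and the normalization $\mathcal{N}=(s_2\check s_2)^{M/2}(q_2^{-1};q_1)_M$.
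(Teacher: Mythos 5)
Your handling of (R1) and (R2) coincides with the paper's: (R1) is exactly the intertwining property of $\Phi,\Phi^*,\check\Phi,\check\Phi^*$ from \eqref{IntPhi}, \eqref{IntPhi*}, and (R2) is checked by computing the contractions from \eqref{cont1}--\eqref{cont4} and matching against \eqref{gii}--\eqref{gij2}, verifying that both sides are Laurent polynomials. The gap is in your step (R3), and it is a real one. The quantity $\Theta(z,w)$ you build from \eqref{PhiPhi*1}, \eqref{PhiPhi*2} is not ``$q_1$-periodic with finitely many poles per period'': since $\beta+\check\beta=M+1$ and $(M+1)/2-i-j\in\Z$ (by the parity conventions for $N=M-1$), the quasi-periodicity $\theta_{q_1}(q_1^n y)=(-1)^nq_1^{-n(n-1)/2}y^{-n}\theta_{q_1}(y)$ collapses the two theta ratios against the prefactor $(w/z)^{M+1}$ into a \emph{constant}; in fact $\Theta\equiv(-1)^{M+1}$ is precisely the statement that the two orderings define the same meromorphic function up to sign, which is what allows the commutator to be a distribution supported on poles in the first place. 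So $\Theta$ has no poles, and your pole count is attached to the wrong object. The poles producing the delta functions in \eqref{X+X-} are those of the contraction scalar $\varphi(w/z)$ in $X^+_i(z)X^-_j(w)=\varphi(w/z)\,{:}X^+_i(z)X^-_j(w){:}$, which the paper computes explicitly as a ratio of \emph{finite} $q_1$-Pochhammer symbols. Only from that rational function can one read off that the poles are simple, finitely many, and confined to the one-sided ranges $r+\check r=\pm(i+j)+\tfrac{M-1}{2}$, $r,\check r\ge0$ --- which is exactly where the shift $N=M-1$ and the possible vanishing of the commutator come from. A periodicity argument cannot produce a bounded, one-sided range of $r$, so as written your argument does not establish the finiteness or the precise support of the right-hand side of \eqref{X+X-}.

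Beyond locating the poles, the computational core of (R3) is the identification of the residues with the fused currents, which you gesture at but do not supply. The paper isolates this as Lemma \ref{lem:PhiK}: on a Fock module one has $\Phi(s_2z)\Phi^*(q_1^rz)=A\,k^+_r(z)$ and $\mathop{\mathrm{Res}}_{w=z}\Phi^*(q_1^rz)\Phi(s_2w)\frac{dw}{w}=-B\,k^+_r(z)$, together with the checked analogues, proved by normal-ordering $e^{(r)}_{q_1}$, $f^{(r)}_{q_1}$ in the free-field realization. One also needs the observation that at each pole exactly one of the two factors $\Phi(z)\Phi^*(w)$, $\check\Phi(q_1^iz)\check\Phi^*(q_1^{-j}w)$ is singular while the other is regular, so that the residue factorizes into a product $k^\pm_r\,\check k^\pm_{\check r}$ with $r+\check r$ fixed; this is also what pins down the normalization $\mathcal{N}$. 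I would accept the rest of your outline, but the (R3) step needs to be redone starting from the explicit rational contraction rather than from the periodic commutation factor.
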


\begin{proof}
First we note that the space $\mathbb{F}_{2;2}(v;\check{v})$
is stable under the action of $X^\pm_i(z)$. 
Due to condition \eqref{integral-cond}, 
$X^\pm_i(z)$
are expanded in integer powers of $z$.
The intertwining properties of $\Phi(z)$, $\Phi^*(z)$, $\check{\Phi}(z)$, and
$\check{\Phi}^*(z)$
ensure relations (R1).

Using the contraction rules given in \eqref{cont1}--\eqref{cont4}, 
we obtain
\begin{align*}
&\cont{X^+_i(z)}{X^+_j(w)} 
={q_1^{i\cb}}\,
z^{M+1}\frac{(q_1^{1-\beta}w/z;q_1)_{j-i+M}}{(q_1w/z;q_1)_{j-i-1}}\,,
\\
&\cont{X^-_i(z)}{X^-_j(w)}
={q_1^{-i\cb}}\,
z^{M+1}
\frac{(w/z;q_1)_{i-j+1}}{(q_1^{\beta}w/z;q_1)_{i-j-M}}\,.
\end{align*}
In order to verify (R2), we may assume $i\le j$
without loss of generality.
Using \eqref{gii}, \eqref{gij1}, \eqref{gij2} and the contractions above,
one checks that
\begin{align*}
&\gamma_{i,j}(z,w)\cont{X^+_i(z)}{X^+_j(w)}=
(-1)^{j-i-1}\gamma_{j,i}(w,z)\cont{X^+_j(w)}{X^+_i(z)}\,, \\
&\gamma_{i,j}(w,z)\cont{X^-_i(z)}{X^-_j(w)}=
(-1)^{j-i-1}\gamma_{j,i}(z,w)\cont{X^-_j(w)}{X^-_i(z)}\,, 
\end{align*}
and that both sides are Laurent polynomials. 
The $X^+X^+$ and $X^-X^-$ relations follow. 

From the contractions 
\begin{align*}
&\cont{X^+_i(z)}{X^-_j(w)} 
={q_1^{-i\cb}}
z^{-M-1}\frac{(s_2q_1w/z;q_1)_{-i-j-(M+1)/2}}{(s_2^{-1}w/z;q_1)_{-i-j+(M+1)/2}}\,,
\\
&\cont{X^-_j(w)}{X^+_i(z)}
={q_1^{j\cb}}
w^{-M-1}\frac{(s_2q_1 z/w;q_1)_{i+j-(M+1)/2}}{(s_2^{-1}z/w;q_1)_{i+j+(M+1)/2}}\,,
\end{align*}
we obtain that 
\begin{align*}
\cont{X^+_i(z)}{X^-_j(w)}
=(-1)^{M+1}
\cont{X^-_j(w)}{X^+_i(z)}\,
\end{align*}
as rational functions.
Close examination shows that the poles of
$\cont{X^+_i(z)}{X^-_j(w)}$ are all simple and are located at either of the 
following lines:
\begin{align*}
(1)\quad&\quad\frac{s_2z}{q_1^rw}=1\quad 
\text{for some $r,\check{r}\ge0$, $r+\check{r}=-i-j+\frac{M-1}{2}$}\,,
\\
(2)\quad&\quad\frac{s_2w}{q_1^rz}=1\quad 
\text{for some $r,\check{r}\ge0$, $r+\check{r}=i+j+\frac{M-1}{2}$}\,.
\end{align*}
In case (1), $\Phi(z)\Phi^*(w)$ has a simple pole and 
$\check{\Phi}(q_1^i z)\check{\Phi}^*(q_1^{-j} w)$ is regular;
in case (2) it is vice versa. 

In order to show the $X^+X^-$ relation, it suffices 
 to calculate the 
residues of $X^+_i(z)X^-_j(w)$ at these lines. This can be done
by applying the following lemma.
\end{proof}

\begin{lem}\label{lem:PhiK}
For all $r\in\Z_{\ge0}$, the following identities hold on $\F_2(v)$
and on $\check{\F}_2(v)$:
\begin{align}
&\Phi^{*}(s_2z)\Phi(q_1^rz)=A\,k^-_r(z)\,,\quad 
\Phi(s_2z)\Phi^*(q_1^rz)=A\,k^+_r(z)\,,
\label{PPK1}\\
&\check{\Phi}(\cq_1^rz)\check{\Phi}^{*}(\cs_2z)=\check{A}\,\ck^-_r(z)\,,\quad
\check{\Phi}^*(\cq_1^rz)\check{\Phi}(\cs_2z)=\check{A}\,\ck^+_r(z)\,,
\label{PPK2}\\
&\mathop{\mathrm{Res}}_{w=z}\Phi(q_1^rz)\Phi^*(s_2w)\frac{dw}{w}
=-Bk_r^-(z)\,,
\quad
\mathop{\mathrm{Res}}_{w=z}\Phi^*(q_1^rz)\Phi(s_2w)
\frac{dw}{w}
=-Bk_r^+(z)\,,
\label{PPK3}\\
&\mathop{\mathrm{Res}}_{w=z}
\check{\Phi}^*(\check{s}_2z)\check{\Phi}(\check{q}_1^rw)\frac{dw}{w}
=-\check{B}\check{k}_r^-(z)\,,
\quad
\mathop{\mathrm{Res}}_{w=z}
\check{\Phi}(\check{s}_2z)\check{\Phi}^*(\check{q}_1^rw)\frac{dw}{w}
=-\check{B}\check{k}^+_r(z)\,,
\label{PPK4}
\end{align}
where 
\begin{align*}
&A=s_2^{-\beta/2}\frac{(q_1;q_1)_\infty}{(q_2^{-1};q_1)_\infty}\,,
\quad 
\check{A}=\check{s}_2^{\check{\beta}/2}
\frac{(q_1;q_1)_\infty}{(\cq_2;q_1)_\infty}\,,
\\
&B=s_2^{\beta/2}\frac{(q_1q_2;q_1)_\infty}{(q_1;q_1)_\infty}\,,\quad
\check{B} 
=\check{s}_2^{-\check{\beta}/2}
\frac{(q_1\check{q}_2^{-1};q_1)_\infty}{(q_1;q_1)_\infty}\,.
\end{align*}
\end{lem}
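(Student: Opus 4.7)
The plan is to prove the product identities \eqref{PPK1}, \eqref{PPK2} by direct free-field computation on the Fock modules, and then to deduce the residue identities \eqref{PPK3}, \eqref{PPK4} from them using the $\Phi$-$\Phi^*$ commutation relations \eqref{PhiPhi*1}, \eqref{PhiPhi*2}.

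For \eqref{PPK1} at $r=0$, namely $\Phi^*(s_2z)\Phi(z)=Ak_0^-(z)$, I would start from $\Phi(z)=e^Qz^{P+\beta/2}\Phi^{osc}(z)$ and $\Phi^*(z)=e^{-Q}z^{-P+\beta/2}\Phi^{*osc}(z)$. Using $[P,e^Q]=\beta e^Q$, the zero-mode prefactor of $\Phi^*(s_2z)\Phi(z)$ collapses to $s_2^{-\beta/2}K^{-1}$ on $\F_2(v)$, matching the $K^{-1}$ of $k_0^-(z)$. The oscillator product $\Phi^{*osc}(s_2z)\Phi^{osc}(z)$ is then normal-ordered using $[h_n,h_{-n}]=\frac{1}{n}(s_2^n-s_2^{-n})/\kappa_n$ on $\F_2$: the negative-mode exponentials combine into $\exp\bigl(\sum_{n>0}\frac{\kappa_n}{q_1^n-1}h_{-n}z^n\bigr)$ matching \eqref{k0 expl}, while the scalar contraction exponentiates to $(q_1;q_1)_\infty/(q_2^{-1};q_1)_\infty$, fixing $A$. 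The general $r\ge 1$ case is an analogous normal-ordering computation: compare $\Phi^*(s_2z)\Phi(q_1^rz)$ (using the contraction \eqref{cont2}) against the free-field form $k_r^-(z)=k_0^-(z)\cdot(-c_1/c_2)^rv^r\xi^+(z)\xi^+(q_1z)\cdots\xi^+(q_1^{r-1}z)$ on $\F_2(v)$; after substituting $s_2=q_1^{(\beta-1)/2}$, the two normal-ordered forms match term by term. The second identity of \eqref{PPK1} is the mirror computation with $\Phi\leftrightarrow\Phi^*$ and $e\leftrightarrow f$, and \eqref{PPK2} is the parallel calculation with $q_1\mapsto\cq_1$, $s_2\mapsto\cs_2$, $\beta\mapsto\cb$; the reversed order $\check\Phi(\cq_1^rz)\check\Phi^*(\cs_2z)$ in \eqref{PPK2} reflects $|\cq_1|>1$, which swaps in- and out-modes.

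For \eqref{PPK3}, I would rewrite $\Phi(q_1^rz)\Phi^*(s_2w)$ using \eqref{PhiPhi*1}:
\[
\Phi(q_1^rz)\Phi^*(s_2w)=\Bigl(\tfrac{s_2w}{q_1^rz}\Bigr)^{\!\beta}\frac{\theta_{q_1}\bigl(q_1^{(1+\beta)/2}s_2w/(q_1^rz)\bigr)}{\theta_{q_1}\bigl(q_1^{(1-\beta)/2}s_2w/(q_1^rz)\bigr)}\Phi^*(s_2w)\Phi(q_1^rz).
\]
Since $s_2=q_1^{(\beta-1)/2}$, the denominator reduces to $\theta_{q_1}(q_1^{-r}w/z)$, which has a simple zero at $w=z$. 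Taking $\mathop{\mathrm{Res}}_{w=z}$ and applying \eqref{PPK1} to $\Phi^*(s_2z)\Phi(q_1^rz)=Ak_r^-(z)$ reduces the statement to the elementary residue of the $q_1$-periodic scalar factor, which produces $-B/A$ after standard $(q_1;q_1)_\infty$ manipulation. Identity \eqref{PPK4} is handled identically via \eqref{PhiPhi*2}, and the remaining $\Phi\Phi^*$ variants follow by interchanging the roles of $e$ and $f$.

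The main obstacle is scalar bookkeeping: the constants $A,B,\check A,\check B$ assemble from powers of $s_2^{\pm\beta/2}$ (or $\cs_2^{\pm\cb/2}$) coming from the zero modes together with infinite $q$-products from the oscillator contractions, and reconciling signs originating from $c_1,c_2$ and from the $f$-current normalization requires care. Once these constants are correctly identified, the remaining verifications are routine vertex-operator algebra, and the residue computation is conceptually immediate.
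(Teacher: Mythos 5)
Your proposal is correct and follows essentially the same route as the paper: the product identities \eqref{PPK1}, \eqref{PPK2} are obtained by writing the fused currents $e^{(r)}_{q_1}(z)$, $f^{(r)}_{q_1}(z)$ in normal-ordered free-field form on $\F_2(v)$ and matching against $\Phi^{osc}$, $\Phi^{*osc}$, and the residue identities \eqref{PPK3}, \eqref{PPK4} are then deduced from the commutation relations \eqref{PhiPhi*1}, \eqref{PhiPhi*2}. Your observation that $q_1^{(1-\beta)/2}s_2=1$ makes the denominator $\theta_{q_1}(q_1^{-r}w/z)$ vanish simply at $w=z$ is exactly the mechanism implicit in the paper's one-line derivation of the residue formulas.
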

\begin{proof}
Using the 
free field realization of $\F_2(v)$ given in 
\eqref{E1Fock-e}, \eqref{E1Fock-f},
we find
\begin{align*}
&e^{(r)}_{q_1}(z)=
\frac{(q_1q_3;q_1)_r}{(q_1;q_1)_r} (s_3^{-1}v)^r 
:\prod_{j=0}^{r-1}\xi^{+}(q_1^jz):\,,
\\ 
&f^{(r)}_{q_1}(z)=
\frac{(q_1^{-1}q_3^{-1};q_1^{-1})_r}{(q_1^{-1};q_1^{-1})_r}
(s_3 v^{-1})^r 
:\prod_{j=0}^{r-1}\xi^{-}(q_1^{j}z):\,. 
\end{align*}
Comparing them with  definition \eqref{Phiosc}, \eqref{Phiosc*}
we obtain \eqref{PPK1}, \eqref{PPK2}. 
Formulas  \eqref{PPK3}, \eqref{PPK4} follow from these and commutation
relations \eqref{PhiPhi*1}, \eqref{PhiPhi*2}.
\end{proof}
\bigskip

\noindent{\it Example 1 (The Fock representation of quantum toroidal 
$\gl_2$ algebra).}

In the case $M=1,N=0$,  $\mathbb{F}_{2;2}(v;\check{v})$ 
is identified 
with the Fock representation of the quantum toroidal $\gl_2$ algebra $\E_2$. 
\qed
\bigskip

\subsection{Tensor product with Fock representations $\F_1(v)$, $\check\F_1(\check v)$}\label{sec:mn}

The following lemma and theorem 
are the counterparts of Lemma 5.5 and Theorem 5.6 in \cite{FJM2}.
The proofs are identical to those given in \cite{FJM2}.

\begin{lem}\label{F1 extended lemma}
The $\E_1[K]$ Fock representation $\F_1(v)$ of color $1$ has an admissible 
$\mc A_{M,1}$ module structure such that 
$X^\pm_i(z)$, $\check{e}(z)$, $\check{f}(z)$ act by zero and $\check{\psi_0},\check{C}, \check{K}$ by $1$.

Similarly, the $\check{\E}_1[\check K]$ Fock representation $\check{\F}_1(v)$ of color $1$ has an admissible $\mc A_{M,1}$ module structure such that $X^\pm_i(z)$, $e(z)$, $f(z)$ act by zero and $\psi_0,C, K$ by $1$. 
\qed
\end{lem}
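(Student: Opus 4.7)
The plan is to verify that the assignment of the statement satisfies all defining relations (R1)--(R3) of $\mc A_{M,1}$, then invoke symmetry for the checked version. I would begin by checking the constraints of $\mathcal{K}_{M,1}$ from \eqref{C,check C}: on $\F_1(v)$ one has $C = s_1$ and $\psi_0 = 1$, and declaring $\check C = \check\psi_0 = \check K = 1$ together with $N = 1$ makes all identifications in $\mathcal{K}_{M,1}$ tautological ($s_1 = s_1^N \cdot 1$). Admissibility in the sense of Remark \ref{rem:admissible} follows from the standard grading on $\F_1(v)$ and the assignment $\wt X^\pm_i = \pm 1$.

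The groups (R1) and (R2) should go essentially for free. Relations (R2) are bilinear in the currents $X^\pm$ and so reduce to $0 = 0$. In every relation of (R1), each term on each side carries either an $X^\pm_j$ factor or one of the nullified checked generators $\check e, \check f, \check \psi^\pm - 1, \check C - 1, \check K - 1$, so every relation collapses to $0 = 0$.

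The substantive step is (R3). The LHS $[X^+_i(z), X^-_j(w)]$ is zero, so the task is to show that the double sum on the RHS also vanishes. Two structural facts drastically collapse it: (a) under our assignment, definition \eqref{k} forces $\check k^\pm_0(z) = \check K^{\pm 1} = 1$ and $\check k^\pm_{\check r}(z) = 0$ for $\check r \geq 1$; and (b) Lemma \ref{k on fock lem} gives $k^\pm_r(z) = 0$ on $\F_1(v)$ for $r \geq 2$. Together these restrict each sum to $\check r = 0$ and $r \in \{0,1\}$, which in turn restricts nontrivial contributions to the two cases $i + j \in \{-\tfrac12, +\tfrac12\}$. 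For each such value, I expect the two surviving delta terms to share the same support (namely $w = s_1^{-1}z$ in one case, $w = s_1 z$ in the other), and exact cancellation will follow from the second identity of Lemma \ref{k on fock lem}, $k_0^\pm(s_1 z) + k_1^\mp(z) = 0$, together with the overall sign $-(-1)^{N} = 1$ for $N = 1$.

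The statement for $\check\F_1(v)$ then follows immediately from the $\E_1 \leftrightarrow \check\E_1$ swap isomorphism recorded in Section \ref{sec:remark AMN}, under which $q_1 \leftrightarrow \check q_1$ and the roles of checked and unchecked generators are exchanged. The only potentially fiddly step is the delta-function and spectral-parameter bookkeeping in (R3); since the sum collapses to just two terms, this presents no conceptual obstacle, only a need for care with the half-integer shifts from $s_1 = q_1^{1/2}$.
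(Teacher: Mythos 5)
Your proposal is correct and follows the route the paper intends: the paper gives no argument here beyond deferring to Lemma 5.5 of \cite{FJM2}, and the content of that verification is exactly your direct check of (R1)--(R3), with (R3) collapsing via $\check k^\pm_{\check r}=\delta_{\check r,0}$, the vanishing $k^\pm_r=0$ ($r\ge 2$) on $\F_1(v)$, and the cancellation $k_0^\pm(s_1z)+k_1^\mp(z)=0$ from Lemma \ref{k on fock lem} (I confirmed the two surviving terms do share support at $w=s_1^{\pm1}z$ and cancel with the sign $-(-1)^N=1$). The only point worth making explicit is that one must also set $\check h_r=0$ (so $\check\psi^\pm(z)=1$ and $\check k_0^\pm(z)=\check K^{\pm1}=1$), which is consistent since $\check C=1$ kills the Heisenberg central term.
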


\begin{thm}\label{coproduct thm}
Let $\mathcal{M}$ be an admissible representation of $\mathcal{A}_{M,N}$.

On the following tensor products, let the action of 
$\mathcal{K}_{M,N}$ be defined by \eqref{coproE1}, 
and let the generators $X^\pm_i(z)$ act as follows.\medskip

(1) ${\mc F}_{1}(v)\otimes \mathcal{M}$:
\begin{align*}
X^+_i(z)&\mapsto
k_0^-(z)\otimes X^+_{i-1/2}(s_1z)+k_1^-(q_1^{-1}z)\otimes X^+_{i+1/2}(s_1^{-1}z)
\,,\\
X^-_i(z)&\mapsto -1\otimes X^-_i(z)\,.
\end{align*}

(2) $\mathcal{M}\otimes\mc F_{1}(v)$:
\begin{align*}
X^+_i(z)&\mapsto X^+_i(z)\otimes 1\,, \\
X^-_i(z)&
\mapsto X^-_{i+1/2}(s_1z)\otimes k_0^+(z)+X^-_{i-1/2}(s_1^{-1}z)\otimes k_1^+(q_1^{-1}z)
\,.
\end{align*}

(3) $\check{\mc F}_{1}(v)\otimes \mathcal{M}$:
\begin{align*}
X^+_i(z)&\mapsto
\check{k}_0^-(q_1^iz)\otimes X^+_{i-1/2}(z)+\check{k}_1^-(q_1^{i+1}z)\otimes X^+_{i+1/2}(z)
\,,\\
X^-_i(z)&\mapsto -1\otimes X^-_i(z)\,.
\end{align*}

(4) $\mathcal{M}\otimes\check{\mc F}_{1}(v)$: 
\begin{align*}
X^+_i(z)&\mapsto X^+_i(z)\otimes 1\,, \\
X^-_i(z)&
\mapsto X^-_{i+1/2}(z)\otimes \check{k}_0^+(q_1^{-i}z)+X^-_{i-1/2}(z)\otimes \check{k}_1^+(q_1^{-i+1}z)
\,.
\end{align*}
Then (1), (3) are admissible representations of $\mathcal{A}_{M,N+1}$ 
and 
(2), (4) are admissible representations of
$\mathcal{A}^-_{M,N+1}$.
\qed
\end{thm}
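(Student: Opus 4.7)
The plan is to verify directly that the formulas in each of (1)--(4) satisfy relations (R1)--(R3) of $\mathcal{A}_{M,N+1}$ (respectively $\mathcal{A}^-_{M,N+1}$). The four cases are parallel under the left/right swap and the $\E_1[K]\leftrightarrow \check\E_1[\check K]$ involution, so I would treat case (1) in detail and deduce the remaining three by symmetry. First, well-definedness: by Lemma \ref{k on fock lem}, only $k_0^-(z)$ and $k_1^-(q_1^{-1}z)$ survive on $\F_1(v)$, each a graded current; admissibility of $\mathcal{M}$ then guarantees that $\F_1(v)\otimes\mathcal{M}$ is a graded-weighted admissible $\mathcal{K}_{M,N+1}$-module in the sense of Remark \ref{rem:admissible}, and the tensor-product formulas define graded-weighted currents.

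For (R1), I would split into the $\E_1[K]$ and $\check\E_1[\check K]$ halves. For the $\check\E_1[\check K]$ adjoint action, Lemma \ref{F1 extended lemma} says $\check e(z),\check f(z),\check\psi^\pm(z),\check K$ act as identity/zero on $\F_1(v)$, so the coproduct \eqref{coproE1} reduces the check of $(\check e X^+_i), (\check f X^+_i), (\check\psi^\pm X^+_i), (\check K X^+_i)$ to the same relations for $X^+_{i\pm1/2}$ on $\mathcal{M}$, with the parameter shifts by $s_1^{\pm1}$ absorbing the half-integer label change. For the $\E_1[K]$ adjoint action, one uses the defining commutation relations of $\E_1$ between $e(z),f(z),\psi^\pm(z)$ and the fused currents $k_0^-,k_1^-$; the identity $k_0^\pm(s_1z)+k_1^\mp(z)=0$ from Lemma \ref{k on fock lem} collapses the two summands of $X^+_i(z)$ into a single term matching the right-hand side of $(eX^+_i),(fX^+_i),(\psi^\pm X^+_i)$.

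For (R2), the $X^-X^-$ relation is immediate since $X^-_i(z)\mapsto -1\otimes X^-_i(z)$ acts only on the second factor. The $X^+X^+$ relation follows by expanding $X^+_i(z)X^+_j(w)$ into four summands and using the $\psi^-$-type braidings among $k_0^-$ and $k_1^-$ inherited from $\E_1$; these produce precisely the polynomial factors needed to balance $\gamma_{i,j}(z,w)$ against $(-1)^{i-j-1}\gamma_{j,i}(w,z)$ under the half-integer label shift, with the change of parity of $N$ accounting for the sign.

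The principal obstacle is (R3), the cross relation $[X^+_i(z),X^-_j(w)]$. Since $X^\pm$ vanish on $\F_1(v)$, only cross terms between the two tensor factors survive, and one must reassemble them into the shifted right-hand side of (R3) for $\mathcal{A}_{M,N+1}$. The strategy is to apply (R3) for $X^+_{i\pm1/2},X^-_j$ on $\mathcal{M}$, producing sums of $k_r^+\check k_{\check r}^+$ (and the minus counterpart) over $r+\check r=(i\pm 1/2)+j+N/2$, and then to use Lemma \ref{lem:DeltaK} on $\F_1(v)\otimes\mathcal{M}$: the coproduct $\Delta k^+_r(z)$ truncates to $r_1\in\{0,1\}$ on $\F_1(v)$ by Lemma \ref{k on fock lem}, so the $k_0^-\otimes(\cdot)$ and $k_1^-\otimes(\cdot)$ contributions recombine into a single sum $\sum_{r+\check r=i+j+(N+1)/2} k_r^+\check k_{\check r}^+$ matching (R3) with shift $N+1$. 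This coproduct-repackaging of the $k$-currents on $\F_1(v)$ against the shifted index is the computational heart of the argument; once it is established, (R3) in the tensor product follows termwise, and the parity of $N+1$ determines whether the resulting structure is $\mathcal{A}_{M,N+1}$ (cases (1),(3)) or $\mathcal{A}^-_{M,N+1}$ (cases (2),(4)).
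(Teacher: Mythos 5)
Your overall strategy --- direct verification of (R1)--(R3) on the tensor product, using Lemma \ref{k on fock lem} to truncate the fused currents on $\F_1(v)$, Lemma \ref{F1 extended lemma} to make the checked half of (R1) reduce trivially to $\mathcal{M}$, and Lemma \ref{lem:DeltaK} to reassemble the $k$-currents in (R3) --- is exactly the route the paper takes: it gives no proof here but states that the argument is identical to that of Lemma 5.5 and Theorem 5.6 of \cite{FJM2}, which is this same direct check. Two points need correction, though. First, your closing claim that ``the parity of $N+1$ determines whether the resulting structure is $\mathcal{A}_{M,N+1}$ or $\mathcal{A}^-_{M,N+1}$'' is wrong: for a fixed $N$, cases (1) and (2) produce modules over algebras with the same shift parameter $N+1$, yet one lands in $\mathcal{A}_{M,N+1}$ and the other in $\mathcal{A}^-_{M,N+1}$. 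The distinction is governed by \emph{which} family of currents acquires the half-integer index shift --- in (1),(3) it is $X^+$ (matching the labeling convention of $\mathcal{A}_{M,N+1}$), in (2),(4) it is $X^-$ (matching $\mathcal{A}^-_{M,N+1}$, cf.\ \eqref{shift lem}) --- not by parity. Second, your use of the identity $k_0^\pm(s_1z)+k_1^\mp(z)=0$ is imprecise: it does not ``collapse the two summands into a single term''; its actual role in (R3) is to convert the first-factor components $k_0^-(z)$, $k_1^-(q_1^{-1}z)$ of $X^+_i(z)$ into $-k_1^+(s_1^{-1}z)$, $-k_0^+(s_1^{-1}z)$, so that the $k^+\check k^+$ half of the commutator can be matched against $\Delta k^+_r$ from Lemma \ref{lem:DeltaK} (whose first tensor component must be a $k^+$, not a $k^-$), while the original $k^-$ form serves the $k^-\check k^-$ half. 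With these repairs the outline is sound.
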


Consider the special case where 
$\mathcal{M}=\mathbb{F}_{2;2}(v_2;\check{v}_2)$.
In Appendix \ref{sec:FermionicR}, Proposition \ref{prop:RPhiR}, we show 
that the R matrix of $\E_1$ on $\F_1(v_1)\otimes \F_2(v_2)$ 
transforms the intertwiner as
\begin{align*}
R^{-1}\bigl(\id\otimes \Phi(z)\bigr) R
=\frac{1}{(s_3v_1/v_2)^{-1/2}-s_3v_1/v_2}
\bigl(k_0^-(z)\otimes\Phi(s_1z)+k^-_1(q_1^{-1}z)\otimes\Phi(s_1^{-1}z)\bigr)\,.
\end{align*}
Since the  R matrix of $\check{\E}_1$ acts as identity
on $\F_1(v_1) \otimes \mathcal{M}$, we have
\begin{align*}
\mathcal{R}^{-1} 
\bigl(\id\otimes\Phi(z)\check{\Phi}(q_1^iz)\bigr)\mathcal{R}
&=R^{-1}\bigl(\id\otimes \Phi(z)\bigr) R\cdot
\check{R}^{-1}\bigl(\id\otimes\check{\Phi}(q_1^iz)\bigr)\check{R}
\\
&
\propto k_0^-(z)\otimes \Phi(s_1z)\check{\Phi}(q_1^iz)
+k_1^-(q_1^{-1}z)\otimes\Phi(s_1^{-1}z)\check{\Phi}(q_1^iz)\,.
\end{align*}
This explains the origin of the formula in Theorem \ref{coproduct thm}.
At the same time it gives a supporting evidence for  Conjecture 
\ref{conj:copro} on the coproduct.
\medskip

More generally,
starting with  $\mathbb{F}_{2;2}(v;\check{v})$
and applying Theorem \ref{coproduct thm} several times,
we obtain a family of representations  of $\mathcal{A}_{M,N}$
or $\mathcal{A}^-_{M,N}$.
For $\mathbf{v}=(v_1,\ldots,v_m)$ and $i_1,\ldots,i_m\in\{1,2,3\}$ we denote
\begin{align*}
&\F_{i_1,\ldots,i_m}(\mathbf{v})=
\F_{i_1}(v_{1})\otimes\cdots\otimes\F_{i_m}(v_{m})\,,
\\
&\mathbf{v}p^{\delta_a}=(v_1,\ldots,\overset{\underset{\smile}{a}}{v_a}p,
\ldots,v_m)\,,
\end{align*}
and similarly for the checked version. 

\begin{thm}\label{thm:F111222}
Let $m,\check{m}$ be positive integers, $1\le a\le m$, $1\le b\le \check{m}$,
and set $N=M+m+\check{m}-3$.
Then the following is an admissible representation of $\mathcal{A}_{M,N}$ 
or $\mathcal{A}^-_{M,N}$ depending on whether $m-a+\check{m}-b$
is even or odd:
\begin{align*}
\mathbb{F}_{1,\ldots,1,\overset{\underset{\smile}{a}}{2},1,\ldots,1;
1,\ldots,1,\overset{\underset{\smile}{b}}{2},1,\ldots,1}
(\mathbf{v};\check{\mathbf{v}})
=
\bigoplus_{n\in\Z}
\F_{1,\ldots,1,\overset{\underset{\smile}{a}}{2},1,\ldots,1}(\mathbf{v}q_3^{-n\delta_{a}})
\boxtimes
\check{\F}_{1,\ldots,1,\overset{\underset{\smile}{b}}{2},1,\ldots,1}(\check{\mathbf{v}}
\cq_3^{-n\delta_{b}})\,.
\end{align*}
Here we assume that
$v_a=q_1^{\lambda_a}$, $\check{v}_b=\cq_1^{\check{\lambda}_b}$, 
$\lambda_a+\check{\lambda}_b\in \Z+\frac{M+1}{2} 
$. 
\qed
\end{thm}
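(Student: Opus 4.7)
The plan is to proceed by induction on $m+\check m \ge 2$, with the base case $m=\check m=1$ (so $a=b=1$ and $N=M-1$) being a direct restatement of Theorem \ref{thm:F22}; indeed, the hypothesis \eqref{integral-cond} matches the congruence $\lambda_a+\check\lambda_b\in\Z+(M+1)/2$ assumed here.

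For the inductive step, the four parts of Theorem \ref{coproduct thm} are exactly the operations that insert a single Fock factor: (1) prepends an $\F_1$ to the $\E_1$ side, (2) appends one, and (3), (4) do the same on the $\check\E_1$ side. Thus starting from $\mathbb{F}_{2;2}(v_a;\check v_b)$ and performing $a-1$ applications of (1), then $m-a$ applications of (2), and similarly $b-1$ of (3) and $\check m-b$ of (4), one builds the prescribed configuration with the $\F_2$ at position $a$ and the $\check\F_2$ at position $b$. The direct sum over $n\in\Z$ is preserved at each step because a single Fock $\F_1$ or $\check\F_1$ has no internal direct-sum structure and is simply appended to every summand. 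Each step increments the shift parameter by one, so the final value is $N=(M-1)+(m-1)+(\check m-1)=M+m+\check m-3$, as required, and admissibility is preserved at every stage.

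The parity assertion is a bookkeeping count: applications of (1) and (3) keep the algebra equal to $\mathcal A_{M,N}$, whereas applications of (2) and (4) switch between $\mathcal A_{M,N}$ and $\mathcal A^-_{M,N}$. The total number of switches is $(m-a)+(\check m-b)$, which gives the claimed alternative between $\mathcal A_{M,N}$ and $\mathcal A^-_{M,N}$ according to the parity of this quantity.

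The main technical obstacle is that Theorem \ref{coproduct thm} as stated requires an $\mathcal A_{M,N}$ module as input, while (2) and (4) output an $\mathcal A^-_{M,N+1}$ module; to iterate, one must first reinterpret the latter as an $\mathcal A_{M,N+1}$ module via the isomorphism $\mathcal A^-_{M,N}\cong\mathcal A_{M,N}$ of the lemma near \eqref{shift lem}. That isomorphism shifts $\check e(z), \check f(z)$ by $s_1$ and relabels $X^\pm_i$ by $\pm 1/2$, but these shifts are harmless for our purpose: the former can be absorbed into the spectral parameter of the next checked Fock factor inserted by (3) or (4), and the latter is exactly the half-integer/integer label adjustment compatible with the parity of $N$. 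Once this identification is made, the four operations can be iterated freely in any order, and no new congruence conditions beyond \eqref{integral-cond} are imposed because the added $\F_1$ and $\check\F_1$ factors enter through the rational currents $k^\pm_r(z)$, $\check k^\pm_r(z)$ rather than through vertex operators carrying fractional exponents.
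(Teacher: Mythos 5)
Your proposal is correct and follows essentially the same route as the paper, which obtains Theorem \ref{thm:F111222} precisely by starting from $\mathbb{F}_{2;2}(v_a;\check v_b)$ (Theorem \ref{thm:F22}) and iterating the four tensoring operations of Theorem \ref{coproduct thm}, with the $\mathcal{A}^-_{M,N}\cong\mathcal{A}_{M,N}$ identification of \eqref{shift lem} allowing the iteration. Your explicit bookkeeping of the shift parameter and of the parity $(m-a)+(\check m-b)$ fills in details the paper leaves implicit.
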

Representations in Theorem \ref{thm:F111222}
with different $a,b$ are related by R matrices and are 
mutually isomorphic for generic $\mathbf{v}$, $\mathbf{\check{v}}$.

\vskip1cm

\noindent{\it Example 2 (Evaluation Wakimoto
representation of quantum toroidal $\gl_2$).}
\medskip

Choose $M=-1$, $N=0$, $m=a=3$, $\check{m}=b=1$. 
We have
\begin{align*}
\check{q}_1=q_1^{-1}\,,\quad \check{q}_2=q_2q_1^2\,,\quad \check{q}_3=q_3\,,
\quad \check C=C=s_1^2s_2\,.
\end{align*}
The procedure described above endows
the
$\mathcal{K}_{-1,0}$ module
\begin{align*}
\mathbb{F}_{1,1,2;2}(v_1,v_2,v_3;\check{v}_1) 
=\bigoplus_{n\in\Z}
\bigl(\F_1(v_1)\otimes\F_1(v_2)\otimes\F_2(q_3^{-n}v_3)\bigr)
\boxtimes\check{\F}_2(\cq_3^{-n}\check{v}_1)\,
\end{align*}
with a structure of
a representation of $\mathcal{A}_{-1,0}$ 
by defining the currents
\begin{align*}
X^+_i(z)&=\Phi^{(3)}(z)\otimes  \check{\Phi}(q_1^iz)\,,\\
X^-_i(z)&=
\id^{\otimes2}\otimes\Phi^*(z)\otimes\check{\Phi}^*(q_1^{-i}z)\,,
\end{align*}
where
\begin{align*}
\Phi^{(3)}(z)
&= k_0^-(z)\otimes k^-_0(s_1z)\otimes \Phi(s_1^2z)\\
&+\Bigl(k_0^-(z)\otimes k_1^-(s_1^{-1}z)
+k_1^-(q_1^{-1}z)\otimes k_0^-(s_1^{-1}z)
\Bigr)\otimes \Phi(z)\\
&+k_1^-(s_1^{-2}z)\otimes k_1^-(s_1^{-3}z)\otimes 
\Phi(s_1^{-2}z)\,.
\end{align*}
While $X^-_i(z)$ is still a single vertex operator, $X^+_i(z)$
is now a sum of four vertex operators.

Due to \cite{FJM2}
along with the isomorphism \eqref{Mto-M},
$\mathcal{A}_{-1,0}(q_1,q_2,q_3)$ can be identified with the quantum toroidal 
$\gl_2$ algebra $\E_2(\bq_1,\bq_2,\bq_3)$
with the parameters
$\bar{q}_1=s_2$, $\bar{q}_2=s_3^2$, $\bar{q}_3=s_1^2s_2$, 
and the central element $C$ acting as the scalar $\bar{q}_3$. 
The module $\mathbb{F}_{1,1,2;2}(v_1,v_2,v_3;\check{v}_1)$
is an evaluation representation of the
Wakimoto representation of the quantum affine algebra 
$U_{\bar{s}_2}\bigl(\widehat{\gl}_2\bigr)$. 

\jm{The $q$-analog of the Wakimoto module for 
$\widehat{\mathfrak{sl}}_2$ was found in \cite{Sh}. 
This can be modified into a module over 
$\E_2(\bar q_1,\bar q_2,\bar q_3)$, by 
introducing an extra Heisenberg algebra and 
composing with the evaluation homomorphism 
$ev:\E_2(\bar q_1,\bar q_2,\bar q_3)
\to U_{\bar{s}_2}\bigl(\widehat{\gl}_2\bigr)$,
see \cite{FJM4} for more details.
To be precise, to compare with \cite{Sh}, we keep $m=3,\check{m}=1$, 
choose $a=2$ 
and consider $\mathbb{F}_{1,2,1;2}(v_1,v_2,v_3;\check{v}_1)$, 
in which $X^\pm_i(z)$ are both sums of two vertex operators. 
(As mentioned already these two representations are isomorphic
by fermionc $R$ matrix.)
}

\qed
\medskip

\noindent{\it Example 3 (Extension of deformed 
$W$ algebra of type $\gl(2n|1)$).}
\medskip

In \cite{FJM}, we considered an extension of 
the deformed $W$ algebra of type $\gl(2n|1)$ 
(enhanced by a Heisenberg algebra) by
introducing two currents $E(z),F(z)$ which carry non-trivial 
momenta. 
Let us show that this construction is a special case of the one in the present paper.

Generalizing the last remark in the
previous example, we choose $M=-1$, $m=2n+1$, $a=n+1$
$\check{m}=b=1$, and consider the $\mathcal{A}_{-1,2n-2}$ module
\begin{align*}
&\mathbb{F}_{1^n,2,1^n;2}(\mathbf{v},v,\bar{\mathbf{v}};\check{v})
=\bigoplus_{l\in\Z}
\bigl(\F_{1^n}(\mathbf{v})\otimes \F_2(q_3^{-l}v)
\otimes\F_{1^n}(\bar{\mathbf{v}})\bigr)  \boxtimes
\check{\F}_2(\cq_3^{-l}\check{v})\,,
\end{align*}
where $\mathbf{v}=(v_n,\ldots,v_1)$,
$\bar{\mathbf{v}}=(v_{\bar 1},\ldots,v_{\bar n})$.

By Theorem \ref{coproduct thm}, currents $X^\pm_{i}(z)$ have the form
\begin{align*}
&X^{+}_i(z) =Z^{+,(n)}(z)\otimes 1^{\otimes n}\otimes \check{\Phi}(q_1^iz)\,,\\
&X^{-}_{i}(z)=1^{\otimes n}\otimes Z^{-,(n)}(z)\otimes\check{\Phi}^*(q_1^{-i}z)
\cdot (-1)^n\mathcal{N}\,,
\end{align*}
where $i\in\Z$ for even $n$ and $i\in\Z+1/2$ for odd $n$.
Currents $Z^{\pm,(n)}(z)$ are sums of $2^n$ vertex operators
defined recursively by
\begin{align*}
&Z^{+,(n)}(z)=k^-_0(z)\otimes Z^{+,(n-1)}(s_1z)+k_1^{-}(q_1^{-1}z)\otimes
Z^{+,(n-1)}(s_1^{-1}z)\,,
\quad Z^{+,(0)}(z)=\Phi(z)\,,
\\ 
&Z^{-,(n)}(z)=Z^{-,(n-1)}(s_1z)\otimes k^+_0(z)+Z^{-,(n-1)}(s_1^{-1}z)\otimes
k^+_1(q_1^{-1}z)\,,
\quad Z^{-,(0)}(z)=\Phi^*(z)\,.
\end{align*}

As in \cite{FJM}, we choose the Dynkin diagram of $\gl(2n|1)$ as follows.

\begin{figure}[H]
\begin{align*}
\begin{tikzpicture}
\dynkin[root radius=.2cm, edge length=1.3cm, 
labels={n,2,1,\overline{1},\overline{2},\overline{n}}]{A}{o.otto.o}
\end{tikzpicture}
\end{align*}
\caption{The $\gl(2n|1)$ Dynkin diagram and labeling.}\label{gl fig}
\end{figure}

The currents $e(z)$, $f(z)$ 
give the  $qq$-character 
corresponding to the $(2n+1)$ dimensional representation.
\jm{This simply means that,  on the module 
$\mathbb{F}_{1^n,2,1^n;2}(\mathbf{v},v,\bar{\mathbf{v}};\check{v})$, 
they are expressed in the form 
\begin{align*}
\sum_{i=1}^n c_i M_i(z) +c_0 M_{0}(z)+\sum_{i=1}^n c_{\bar{i}}M_{\bar{i}}(z)\,,
\end{align*}
where $M_i(z)$ are vertex operators and $c_i\in\C$ are some coefficients. 
Due to the coproduct formula \eqref{coproE1} 
and the identity \eqref{psi e psi f}, 
the ratios of neighboring terms are given by the currents 
\begin{align*}
&A^{-1}_i(z)=1^{\otimes(n-i)}\otimes :\psi^-(s_1^{-1}z)e(s_1^{-1}z)^{-1}:
\otimes e(z)\otimes 1^{\otimes(n+i-1)}\,,
\\
&A^{-1}_{\bar i}(z)=1^{\otimes(n+i-1)}\otimes f(z)
\otimes :f(s_1^{-1}z)^{-1}\psi^+(s_1^{-1}z):\otimes 1^{\otimes(n-i)}\,,
\end{align*}
where $i=1,\ldots,n$, and the $e(z), f(z), \psi^\pm(z)$ 
in the right hand side stand for
the images of these currents in a single Fock module. 
This is pictured in the following figure 
where $A_{i,a}=A_i(az)$ 
(the roles of $s_1$ and $s_2$ are switched from \cite{FJM}).
}

\begin{figure}[H]
\begin{align*}
\begin{tikzpicture}
\node[rectangle,draw,minimum width = 0.75cm, 
    minimum height = 0.75cm] (r) at (-9,0) {};
 \node[rectangle,draw,minimum width = 0.75cm, 
    minimum height = 0.75cm] (r) at (-6.75,0) {};
    \node[rectangle,draw,minimum width = 0.75cm, 
    minimum height = 0.75cm] (r) at (-2.25,0) {};
    \node[rectangle,draw,minimum width = 0.75cm, 
    minimum height = 0.75cm] (r) at (0,0) {};
    \node[rectangle,draw,minimum width = 0.75cm, 
    minimum height = 0.75cm] (r) at (2.25,0) {};
    \node[rectangle,draw,minimum width = 0.75cm, 
    minimum height = 0.75cm] (r) at (6.75,0) {};
    \node at (-9,0) {\footnotesize$n$};
    \node at (-6.75,0){\footnotesize$n\hspace{-3pt}-\hspace{-3pt}1$};
    \node at (-4.5,0) {$\dots$};
    \node at (4.5,0) {$\dots$};
    \node at (-2.25,0) {\footnotesize$1$};
        \node at (0,0) {\footnotesize$0$};
        \node at (2.25,0) {\footnotesize$\overline{1}$};    
        \node at (6.75,0) {\footnotesize$\overline{n}$};  
\draw[ ->] (-8.5,0)-- node[below]{{\small $A_{n,s_1}^{-1}$}}(-7.25,0);
\draw[ ->] (-6.25,0)-- node[below]{{\small $A_{n-1,s_1^2}^{-1}$}}(-5,0);
\draw[ ->] (-4,0) -- node[below]{{\small $A_{2,s_1^{n-1}}^{-1}$}} (-2.75,0);
\draw[ ->] (-1.75,0)-- node[below]{{\small $A_{1,s_1^n}^{-1}$}}(-0.5,0);
\draw[ ->] (0.5,0)-- node[below]{\small $A^{-1}_{\overline{1},s_1^ns_2}$}(1.75,0);
\draw[ ->] (2.75,0)-- node[below]{\hspace{5pt}\small $A^{-1}_{\overline{2},s_1^{n+1}s_2}$}(4,0);
\draw[ ->] (5,0)-- node[below]{\small $A^{-1}_{\overline{n},s_1^{2n-1}s_2}\hspace{2pt}$}(6.25,0);
\end{tikzpicture}
\end{align*}
\caption{The $qq$-character corresponding to the $\gl(2n|1)$ vector representation.}\label{one box pic}
\end{figure}

The current $Z^{+,(n)}(z)$ gives the $qq$-character denoted $\xi_1^{(n)}$
in \cite{FJM}. As an illustration we take $n=3$. In the following figure,
the top left term is 
\begin{align*}
\Lambda'(z)=k_1^-(s_1^{-2}z)\otimes k^-_1(s_1^{-3}z)\otimes k^-_1(s_1^{-4}z)
\otimes \Phi(s_1^{-3}z)\otimes 1^{\otimes 3}. 
\end{align*}

\begin{figure}[H]
\begin{align*}
\begin{tikzpicture}
\node at (-6.75,0) {$\bullet$};
\node at (-4.5,0) {$\bullet$};
\node at (-2.5,0) {$\bullet$};
\node at (0,0) {$\bullet$};
\node at (-2.25,-2.25) {$\bullet$};
\node at (0,-2.25) {$\bullet$};
\node at (2.25,-2.25) {$\bullet$};
\node at (4.5,-2.25) {$\bullet$};
\draw[ ->] (-6.25,0)-- node[below]{{\small $A_{1,s_1^{-3}}^{-1}$}}(-5,0);
\draw[ ->] (-4,0)-- node[below]{{\small $A_{2,s_1^{-2}}^{-1}$}}(-2.75,0);
\draw[ ->] (-1.75,0)-- node[below]{{\small $A_{1,s_1^{-1}}^{-1}$}}(-0.5,0);
\draw[ ->] (-1.75,-2.25)-- node[below]{{\small $A_{1,s_1^{-1}}^{-1}$}}(-0.5,-2.25);
\draw[ ->] (0.5,-2.25)-- node[below]{{\small $A_{2,1}^{-1}$}}(1.75,-2.25);
\draw[ ->] (2.75,-2.25)-- node[below]{{\small $A_{1,s_1}^{-1}$}}(4,-2.25);
\draw[ ->] (-2.25,-0.5)-- node[left]{{\small $A_{3,s_1^{-1}}^{-1}$}}(-2.25,-1.75);
\draw[ ->] (0,-0.5)-- node[left]{{\small $A_{3,s_1^{-1}}^{-1}$}}(0,-1.75);
\end{tikzpicture}
\end{align*}
\caption{The $qq$-character $\xi_1^{(3)}$.}\label{char gl(2n|1)}
\end{figure}
To get the above picture we use the recursion 
$k_0^\pm(q_1z)=(\psi_0)^{\mp1}\psi^\pm(z)k^\pm_0(z)$, together with the 
identities valid on $\F_2(v)$:
\begin{align*}
e(z)=-c_2^{-1}s_3 :\Phi(q_1z)\Phi(z)^{-1}:\,,
\quad
f(z)=c_2^{-1}s_3:\Phi^*(q_1z){\Phi^*(z)}^{-1}:\,.
\end{align*}
For the current $Z^{-,(n)}(z)$, we interchange $A_i$ with $A_{\bar i}$,
and $\Lambda'(z)$ with
\begin{align*}
\bar{\Lambda}'(z)=1^{\otimes 3}\otimes
\Phi^*(s_1^{-3}z)\otimes
k_1^+(s_1^{-4}z)\otimes k^+_1(s_1^{-3}z)\otimes k^+_1(s_1^{-2}z)\,.
\end{align*}

Fix $i$, and set $E(z)=X^+_i(z)$, $F(z)=X^-_{-i}(z)$,
$\Lambda(z)=\Lambda'(z)\otimes\check{\Phi}(q_1^iz)$, 
$\bar{\Lambda}(z)=\bar{\Lambda}'(z)\otimes\check{\Phi}^*(q_1^iz)$.
It is straightforward to verify the contractions
\begin{align*}
&\cont{\Lambda(z)}{\Lambda(w)}=\frac{z-w}{z-s_3^2w}\,,\quad \cont{\bar\Lambda(z)}{\bar\Lambda(w)}=\frac{z-w}{z-s_3^{-2}w}\,,
\\
&\cont{\Lambda(z)}{\bar\Lambda(w)}=\cont{\bar\Lambda(z)}{\Lambda(w)}=1\,.\nn
\end{align*}
From the relations in $\mathcal{A}_{-1,2n-2}$, we obtain 
\begin{align*}
&(z-q_3w)E(z)E(w)+(w-q_3z)E(w)E(z)=0\,,\\
&(w-q_3z)F(z)F(w)+(z-q_3w)F(w)F(z)=0\,,\\
&[E(z),F(w)]=\sum_{r=0}^{n-1}
\delta\Bigl(q_1^{n-r}s_2\frac{w}{z}\Bigr)
k^+_r(q_1^{-r}w)\check{k}^+_{n-1-r}(q_1^{n-1-r+i}w)
\\
&\hspace{2.3cm}-\sum_{r=0}^{n-1}\delta\Bigl(q_1^{n-r}s_2\frac{z}{w}\Bigr)
k^-_r(q_1^{-r}z)\check{k}^-_{n-1-r}(q_1^{n-1-r+i}z)\,.
\end{align*}
Thus we recover the relations in Theorem 3.4
and Theorem 3.6 in \cite{FJM} wherein the role of $s_1$ and $s_2$ are switched.

\qed
\medskip

\subsection{Representations $\mathbb{F}_{2,2;2,2}$}\label{sec:s22s22}

We now consider the ``tensor product'' of the representations 
$\mathbb{F}_{2;2}$ given in Section \ref{sec:s2s2}.
In this section we assume that $M$ is odd. 

The coproduct conjecture in Section \ref{sec:corpo} says that 
this should be a representation of $\mathcal{A}_{M,2M-2}$ on the space 
\begin{align*}
\mathbb{F}_{2,2;2,2}(v_1,v_2;\check{v}_1,\check{v}_2) 
=\bigoplus_{n_1,n_2\in\Z}
\bigl(\F_2(q_3^{-n_1}v_1)\otimes \F_2(q_3^{-n_2}v_2)\bigr)
\boxtimes
\bigl(\check{\F}_2(\cq_3^{-n_1}\check{v}_1)\otimes 
\check{\F}_2(\cq_3^{-n_2}\check{v}_2)\bigr)\,.
\end{align*}
Here we set $v_i=q_1^{\lambda_i}$, $\check{v}_i=\cq_1^{\check{\lambda}_i}$, 
assuming
\begin{align}
\lambda_i+\check{\lambda}_i\in \Z
\,,\quad i=1,2\,.
\label{F2222lambda}
\end{align}
We set
\begin{align*}
&L=\lambda_1+\check{\lambda}_1-\lambda_2-\check{\lambda}_2\in\Z\,.
\end{align*}
We shall write $P_1=P\otimes 1$, $P_2=1\otimes P$, $P_{1,2}=P_1-P_2$, 
and similarly for $\check{P}_{1,2}$. 
The eigenvalues of $P_i$, $\check{P}_i$ 
are $\lambda_i+n_i\beta$ and $\check{\lambda}_i+n_i\check{\beta}$, respectively.
In particular
\begin{align}
&P_{1,2}+\check{P}_{1,2}=L+(n_1-n_2)(M+1)\,.
\label{P12P12}
\end{align}

According to the conjecture the currents $X^\pm_i(z)$ should act as
\begin{align*}
&\Delta_{M,M-1} X^+_i(z) 
=\Phi(z)\check{\Phi}(q_1^iz)\otimes 1+ 
R^{-1}(1\otimes\Phi(z))R
\cdot \check{R}^{-1}(1\otimes\check{\Phi}(q_1^iz))
\check{R} 
\,,\\
&\Delta_{M,M-1} X^-_i(z) 
=R^{-1}(\Phi^*(z)\otimes1)R
\cdot \check{R}^{-1}(\check{\Phi}^*(q_1^{-i}z)\otimes1)\check{R} 
+1\otimes \Phi^*(z)\check{\Phi}^*(q_1^{-i}z)
\,.
\end{align*}
We shall construct the representations 
replacing the objects $R^{-1}(1\otimes\Phi(z))R$, etc. by
screened intertwiners, which have
the same intertwining properties;
see \eqref{level2-E}, \eqref{level2-F} below.

From now on, we use the symbol
\begin{align*}
[u]=q_1^{u(u-1)/2}\theta_{q_1}(q_1^u)\,,
\quad 
\end{align*}
which satisfies
\begin{align*}
[u+1]=-[u]=[-u]\,.
\end{align*}
Set
\begin{align*}
f(z;\beta,P)&=\frac{[u+P+\frac{1-\beta}{2}]}{[u+\frac{1+\beta}{2}]}\,
\quad \text{for $z=q_1^u$}.
\end{align*}
We introduce the screened intertwiners by
\begin{align}
&\Phi_+(z)=\Phi(z)\otimes \id\,,\quad 
\Phi_-(z)=\int_\Gamma \frac{dx}{2\pi \sqrt{-1} x}\Phi_+(z)S(x)f(x/z;\beta,P_{1,2})\,,
\label{scv}
\\
&\Phi^*_+(z)=\id\otimes \Phi^*(z)\,,\quad 
\Phi^*_-(z)=\int_\Gamma \frac{dx}{2\pi \sqrt{-1} x}
\Phi^*_+(z)S(x)f(x/z;\beta,P_{1,2})\,.
\label{scv*}
\end{align}
Here 
\begin{align*}
S(x)=\Phi^*(x)\otimes\Phi(x)
\end{align*}
is the screening current.\footnote{The symbol $S(x)$ should
not be confused with the antipode.} 
The set of poles of the integrands of \eqref{scv},\eqref{scv*}
consists of  two sets,
\begin{align}
\{s_2^{-1}q_1^nz\mid n\ge0\}\,,\quad \{s_2q_1^{-n}z\mid n\ge0\}\,, 
\label{sc-poles}
\end{align}
and the contour $\Gamma$ is a simple closed curve which separate them.
\medskip

\begin{rem}\label{rem:screened VO}
{\rm To motivate the definition \eqref{scv}--\eqref{scv*}, 
note that the formal Jackson integral
\begin{align*}
\mathcal{S}(\xi)&
=\int_0^{\xi\infty}S(x)\frac{d_{q_1}x}{x}
=(1-q_1)\sum_{n=-\infty}^\infty\Phi^*(q_1^n\xi)\otimes\Phi(q_1^n\xi)\,
\end{align*}
gives the composition of intertwiners
\begin{align*}
&\F_2(q_3^{-1}v_1)\otimes\F_2(v_2)\longrightarrow
\F_2(v_1)\otimes V_1(\xi)\otimes \F_2(v_2)
\longrightarrow
\F_2(v_1)\otimes\F_2(q_3^{-1}v_2)\,.
\end{align*}
The above expression for $\mathcal{S}(\xi)$ is divergent. 
Nevertheless, the composition 
\begin{align*}
\Phi_+(z)\circ\mathcal{S}(\xi)
=\int_0^{\xi\infty}\Phi_+(z) S(x)\frac{d_{q_1}x}{x}
\end{align*}
is convergent and yields a well-defined homomorphism of $\E_1$ modules
\begin{align*}
&V_1(z)\otimes \F_2(q_3^{-1}v_1)\otimes\F_2(v_2)
\longrightarrow 
\F_2(q_3^{-1}v_1)\otimes\F_2(q_3^{-1}v_2)\,.
\end{align*}
Different choices of $\xi$ give the same operator up to proportionality. 
The contour integral \eqref{scv} is so designed that
the sum of residues over the sets of poles  \eqref{sc-poles}
yields (up to proportionality)
the Jackson integral with $\xi=s_2^{-1}z$ or $\xi=s_2z$, 
respectively.

As is well known \cite{FHHSY}, the $e(z)$ current of $\E_1$ on 
$\F_2(v_1)\otimes\F_2(v_2)$ gives the generating current of
the deformed Virasoro algebra enhanced
by a Heisenberg algebra. 
The screened intertwiner \eqref{scv} first appeared in \cite{L,LP}
in the context of the deformed Virasoro algebra, i.e. in the absence of the 
extra Heisenberg algebra.
\qed}
\end{rem}
\medskip

For the construction of representations, we need also
the $\check{\E}_1[\check K]$ counterparts 
$\check{\Phi}_\pm(z)$, $\check{\Phi}^*_\pm(z)$. They
are defined by \eqref{scv}, \eqref{scv*} with the replacement
\begin{align*}
\Phi_\pm(z)\to\check{\Phi}_\pm(z)\,,\quad
\Phi^*_\pm(z)\to\check{\Phi}^*_\pm(z)\,,\quad
f(z;\beta,P_{1,2})\to f(z;\check{\beta},\check{P}_{1,2})\,.
\end{align*}

We slightly change the normalization
\begin{align}
&\tilde{\Phi}_\pm(z)=\pm \Phi_\pm(z)\frac{1}{[P_{1,2}]}\,,\quad
\tilde{\Phi}^*_\pm(z)=\mp \Phi_\pm^*(z)\frac{1}{[P_{1,2}]}\,.
\label{Phitilde2}
\end{align}
This has the effect of transposing the $R$ matrix in 
\eqref{PPR}--\eqref{PP*R}.

Define
\begin{align}
&X^+_i(z)=
\tilde{\Phi}_+(z)\check{\Phi}_+(q_1^iz)
+\tilde{\Phi}_-(z)\check{\Phi}_-(q_1^iz)
\,,
\label{level2-E}
\\
&\mathcal{N}_2^{-1}X^-_i(z)
=
\tilde{\Phi}^*_+(z)\check{\Phi}^*_+(q_1^{-i}z)
+\tilde{\Phi}^*_-(z)\check{\Phi}^{*}_-(q_1^{-i}z)
\,,
\label{level2-F}
\end{align}
where $\mathcal{N}_2$ is a normalization constant.

Recall that
\begin{align*}
L=\lambda_1+\check{\lambda}_1-\lambda_2- \check{\lambda}_2 
\end{align*}
is an integer due to \eqref{F2222lambda}.

The main result of this section is the following.
\begin{thm}\label{thm:level2}
With the choice $\mathcal{N}_2^{-1}=(-1)^L AB\check{B}^2/[\beta]^2$, 
the operators  \eqref{level2-E}, \eqref{level2-F}
yield an admissible representation of 
$\mathcal{A}_{M,2M-2}$ on 
$\mathbb{F}_{2,2;2,2}(v_1,v_2;\check{v}_1,\check{v}_2) $.
\qed
\end{thm}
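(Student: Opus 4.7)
The plan is to verify the three families of defining relations (R1), (R2), (R3) of $\mathcal{A}_{M,2M-2}$ directly in the free field realization \eqref{level2-E}, \eqref{level2-F}. Since the construction is modeled on the conjectural coproduct with the universal $R$ matrix replaced by screened intertwiners, the necessary intertwining data is inherited from the individual Fock factors, and admissibility is immediate from the usual filtrations on $\F_2\otimes\F_2$.

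For (R1), I would first observe that each of $\tilde\Phi_\pm(z)$, $\tilde\Phi^*_\pm(z)$ is a homomorphism of left/right $\E_1[K]$ modules, and similarly with checks for $\check\E_1[\check K]$. The screening current $S(x)=\Phi^*(x)\otimes\Phi(x)$ commutes with the diagonal action of $\E_1$ on $\F_2\otimes \F_2$ modulo a total $q_1$-derivative; the scalar factor $f(x/z;\beta,P_{1,2})$ is designed so that the combined integrand is single-valued on the $q_1$-cylinder after taking into account the zero-mode shift of $S(x)$. Integration along the closed contour $\Gamma$ then produces a genuine intertwiner. The checked version is completely analogous, and since the screening in $\F_2\otimes \F_2$ commutes with the $\check\E_1[\check K]$ action (and vice versa), (R1) reduces to collecting these facts.

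For (R2), the key step is to compute $X^+_i(z)X^+_j(w)$ and $X^-_i(z)X^-_j(w)$ by applying the contractions \eqref{cont1}--\eqref{cont4} together with contractions of $S(x)$ with $\Phi_\pm$ and $\check\Phi_\pm$. After deforming the two screening contours into a standard ordering, the rearrangement produces theta-function factors which, when combined with the prefactor $1/[P_{1,2}]$ from \eqref{Phitilde2}, match $\gamma_{j,i}(w,z)/\gamma_{i,j}(z,w)$ as rational functions of $w/z$. The condition $\beta+\check\beta=M+1$ is precisely what makes the $q_1$-shift of the integrand cancel between the unchecked and checked pieces; the assumption that $M$ is odd enters to produce the correct sign $(-1)^{i-j-1}$. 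The $X^-X^-$ case is symmetric.

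The main obstacle, and the heart of the proof, is relation (R3): the (super)commutator $[X^+_i(z),X^-_j(w)]$ must collapse into the finite sum of delta-function terms in \eqref{X+X-}. I would expand $X^+_i(z)X^-_j(w)$ and $X^-_j(w)X^+_i(z)$ as four terms each according to the $\pm$ labels and compute matrix elements as rational functions. Diagonal sectors (same $\pm$) contract cleanly; off-diagonal sectors require exchanging the order of the two screening contours, which by a standard deformation argument picks up residues on the sets \eqref{sc-poles} and converts back to Jackson-type expressions. The non-delta parts of the commutator then cancel between the four $\pm\pm$ sectors by a theta-function identity enforced by $\beta+\check\beta=M+1$ and the oddness of $M$. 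The delta-supported part is evaluated by specializing $w$ to $q_1^rz/C$ (or $z$ to $q_1^rw/C$) and invoking Lemma \ref{lem:PhiK} on each Fock tensor factor, which converts coincident pairs $\Phi\Phi^*$ and $\check\Phi\check\Phi^*$ into the fused currents $k^\pm_r(z)\check k^\pm_{\check r}(\cdots)$. The normalization $\mathcal{N}_2^{-1}=(-1)^L AB\check B^2/[\beta]^2$ is fixed precisely to match the constants in \eqref{X+X-}, with the sign $(-1)^L$ arising from evaluating the zero-mode theta factor $[u+P_{1,2}+\tfrac{1-\beta}{2}]$ at the shifted spectral parameters. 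The hardest technical step will be tracking the cancellation of all non-delta rational contributions across the four sectors, which depends delicately on the shift relation $\beta+\check\beta\in\Z$ and on the parity of $M$.
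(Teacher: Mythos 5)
Your overall strategy is the same as the paper's: verify (R1)--(R3) directly in the free field realization, using exchange relations of the screened intertwiners, a pole analysis, and residue computations via Lemma \ref{lem:PhiK} to produce the $k^\pm_r\check{k}^\pm_{\check r}$ terms. However, two steps are stated in a way that would not go through as written. The central one concerns the cancellation mechanism in (R2) and (R3). The exchange relations of $\Phi_\pm(z)$, $\Phi^*_\pm(z)$ (Proposition \ref{commPhi}) are governed by the \emph{non-diagonal} elliptic dynamical $R$ matrix $R(u;\beta,P_{1,2})$, which mixes the components $\Phi_+\Phi^*_-$ and $\Phi_-\Phi^*_+$; consequently the cancellation is not a sector-by-sector scalar ``theta-function identity'' among the four $\pm\pm$ terms, but the matrix identity $R(u;\check{\beta},\check{P}_{1,2})=R(u;\beta,P_{1,2})^{-1}$, which follows from $\check{\beta}\equiv-\beta\bmod\Z$, the integrality of the spectrum of $P_{1,2}+\check{P}_{1,2}$ (this is where the oddness of $M$ enters, via \eqref{P12P12}), and the unitarity $R(-u;\beta,P)=R(u;\beta,P)^{-1}$. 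This is Lemma \ref{lem:rel-rat}: it gives $X^+_i(z)X^-_j(w)=X^-_j(w)X^+_i(z)$ as meromorphic operators, after which the commutator is automatically delta-supported, while the scalar prefactors $\rho\check{\rho}$, $\rho^*\check{\rho}^*$ account for the ratio $\gamma_{j,i}/\gamma_{i,j}$ in (R2). If your four-sector cancellation is read as four independent scalar identities it is false, and your outline supplies no substitute for the matrix statement.

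The second gap is the passage from rational-function identities to formal-series identities. The two sides of \eqref{X+X+} are expanded in different regions ($|w/z|\ll1$ versus $|z/w|\ll1$), so matching them ``as rational functions of $w/z$'' is not sufficient: one must show, as in Lemmas \ref{lem:location} and \ref{lem:EE-FF}, that the pole sets of $\gamma_{i,j}(z,w)X^+_i(z)X^+_j(w)$ and of $\gamma_{j,i}(w,z)X^+_j(w)X^+_i(z)$ are disjoint, whence both sides are Laurent polynomials and the expansions coincide. Similarly, for (R3) you need all poles of $X^+_i(z)X^-_j(w)$ to be simple (Lemmas \ref{lem:location} and \ref{lem:EFpole}) -- in particular the nontrivial exclusion of poles in the $(-,-)$ sector, where a double screening integral appears -- since otherwise derivatives of delta functions could occur. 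Neither point is addressed in your proposal. The remaining ingredients (residues via Lemma \ref{lem:PhiK}, and the normalization, where the sign $(-1)^L$ is traced through $P_{1,2}+\check{P}_{1,2}\equiv L\bmod 2\Z$) agree in substance with the paper's Lemmas \ref{lem:EFres} and \ref{lem:EFvalue}.
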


We give a proof of Theorem \ref{thm:level2} in Appendix \ref{sec:Proof}.

\medskip

It is possible to extend the 
representation $\mathbb{F}_{2,2;2,2}$
to the case involving \jm{$m$} 
copies of Fock spaces
\begin{align*}
&\mathbb{F}_{2,\ldots,2;2,\ldots,2}(v_1,\ldots,v_m;
\check{v}_1,\ldots,\check{v}_m) \\
&=\bigoplus_{n_1,\ldots,n_m\in\Z}
\bigl(\F_2(q_3^{-n_1}v_1)\otimes \cdots\otimes
\F_2(q_3^{-n_m}v_m)\bigr)
\boxtimes
\bigl(\check{\F}_2(\cq_3^{-n_1}\check{v}_1)\otimes \cdots\otimes
\check{\F}_2(\cq_3^{-n_m}\check{v}_m)\bigr)\,.
\end{align*}
This is a representation of $\mathcal{A}_{M,m(M-1)}$. 
In Appendix \ref{sec:2222} we present  
formulas which are relevant for this computation.
\medskip

In order to make contact with Conjecture \ref{conj:copro}, 
let us rewrite the screened intertwiner \eqref{scv} by 
taking residues at \eqref{sc-poles}. 
Using the identities in Lemma \ref{lem:PhiK} we find
\begin{align*}
\Phi_-(z)
&\propto \sum_{r\ge0}k^+_r(s_2^{-1}z)\otimes \Phi(s_2^{-1}q_1^rz)\\ 
&\propto \sum_{r\ge0}k^-_r(s_2^{-1}z)\otimes \Phi(s_2q_1^{-r}z)\,,
\end{align*}
and similar formulas for $\check{\Phi}_-(z)$. 
Therefore, up to proportionality constants,  one can rewrite
the second term of \eqref{level2-E} in either of the following forms:
\begin{align*}
\Phi_-(z)\check\Phi_-(q_1^iz)
&\propto \sum_{r,\check r\ge0}
k^+_r(s_2^{-1}z)\check k^-_{\check r}(\check s_2^{-1}q_1^iz)
\otimes 
\Phi(s_2^{-1}q_1^{r}z)\check\Phi(\check s_2^{-1}\check q_1^{\check r}q_1^iz)
\\
&\propto \sum_{r,\check r\ge0}
k^-_r(q_1^{-r}z)\check k^-_{\check r}(\check q_1^{-\check r}q_1^iz)
\otimes 
\Phi(s_2q_1^{-r}z)\check\Phi(\check s_2\check q_1^{-\check r}q_1^iz)\,.
\end{align*}
This leads us to speculate that 
the conjectured coproduct may be written explicitly as infinite sums:
\begin{align}
&\Delta_{N_1,N_2}
X^+_i(z)=X^+_i(z)\otimes 1+\sum_{r,\check r\ge0} k_{r}^-(q_1^{-r}z)
\check k^-_{\check r}(\cq_1^{-\check r}q_1^iz)\otimes 
X^+_{r+\check r+i-N_1/2}(q_1^{-r}C_1z)\,,
\label{Delta-inf}\\
&\Delta_{N_1,N_2}
X^-_i(z)=\sum_{r,\check r\ge0} 
X^-_{-r-\check r+i+N_2/2}(q_1^{-r}C_2z)
\otimes k_r^+(q_1^{-r}z)\check k_{\check r}^+(\cq_1^{-\check r}q_1^{-i}z)
+1\otimes X^-_i(z)\,.\nn
\end{align} 
Writing these formulas, we have adjusted 
the constant in front of the sum 
so that the coassociativity holds true. 
In the cases treated in Theorem \ref{coproduct thm} 
the series truncate,  
the only non-vanishing terms being $r=0,1$ and $\check r=0$. 
In general, however, we do not know when such formal sums make sense.
We only note that 
the above formula and $\Delta k^\pm_r(z)$ in Lemma \ref{lem:DeltaK}
share a close similarity with the Drinfeld coproduct for 
$U_{s_2}\widehat{\mathfrak{sl}}_2$,
\begin{align*}
&\Delta X^+_i= X^+_i\otimes 1+\sum_{r\ge0}(C^{-i-r}\Psi^-_{-r})
\otimes X^+_{r+i}\,,\\
&\Delta X^-_i=\sum_{r\ge0} X^-_{-r+i}\otimes (C^{-i+r}\Psi^+_r)
+1\otimes X^-_i\,,\\
&\Delta \Psi^+_r=\sum_{r_1+r_2=r\atop r_1,r_2\ge0}
\Psi^+_{r_1}\otimes(C^{-r_1}\Psi^+_{r_2})\,, \\
&\Delta \Psi^-_{-r}=\sum_{r_1+r_2=r\atop r_1,r_2\ge0}
(C^{r_2}\Psi^-_{-r_1})\otimes\Psi^-_{-r_2}\,,
\end{align*}
where $X^\pm(z)=\sum_{i\in\Z}X^\pm_iz^{-i}$,
$\Psi^\pm(z)=\sum_{r\ge0}\Psi^{\pm}_{\pm r}z^{\mp r}$
are the Drinfeld currents.

\appendix

\section{Fermionic R matrix}\label{sec:FermionicR}

In this section we describe the R matrix for the tensor product
$\F_1(v_1)\otimes \F_2(v_2)$. 
We set $v_2/v_1=q_3^\gamma$, and assume that $2\gamma\not\in \Z$. 
We shall work with the direct sum
\begin{align*}
\Fb=\bigoplus_{l\in\Z}\Fb_l\,, 
\quad \Fb_l=\F_1(s_3^{-l}v_1)\otimes\F_2(s_3^l v_2)\,.
\end{align*}
Let $\ket{\emptyset}_{\Fb_l}=\ket{\emptyset}_{\F_1(s_3^{-l}v_1)}
\otimes\ket{\emptyset}_{\F_2(s_3^lv_2)}$.

On each $\Fb_l$ we set
$h_{\pm r}^{(1)}=h_{\pm r}\otimes 1$,
$h^{(2)}_{\pm r}=1\otimes h_{\pm r}$ and 
\begin{align*}
&a_{-r}=\Delta h_{-r}=h^{(1)}_{-r}+s_1^r h^{(2)}_{-r}\,,\\ 
&a_{r}=\Delta h_{r}=s_2^{-r}h^{(1)}_{r}+ h^{(2)}_{r}\,,\\ 
&b_{-r}=(1-q_2^r)s_1^{r}s_2^{-r} h^{(1)}_{-r}-(1-q_1^r)s_2^{-r}h^{(2)}_{-r}\,,\\
&b_r=-(1-q_2^r)s_2^{-r}h^{(1)}_r-(1-q_1^{-r})h^{(2)}_r\,.
\end{align*}
Then 
\begin{align*}
[a_r,b_s]=0\,,\quad [b_r,b_{s}]=-\delta_{r+s,0}\frac{1}{r}\,.
\end{align*}
Define further the zero mode operators 
$e^{Q_b}:\Fb_l\to\Fb_{l+1} $ and $b_0:\Fb_l\to\Fb_l$ which commute with 
$a_r,b_r$, $r\neq 0$, and 
\begin{align*}
e^{Q_b}\ket{\emptyset}_{\Fb_l}=\ket{\emptyset}_{\Fb_{l+1}}\,,\quad
b_0\bigl|_{\Fb_l}=(\gamma+l)\cdot\id_{\Fb_l}\,.
\end{align*}
We have $[b_0,Q_b]=1$. 

Introduce the screening current
$\Sc(z)$ and its conjugate $\Sc^*(z)$ by
\begin{align*}
&\Sc(z)=e^{Q_b}z^{b_0+1/2}\Sc^{osc}(z)\,,
\quad \Sc^{osc}(z)=:\exp\bigl(\sum_{r\neq0}b_rz^{-r}\bigr):\,,\\
&\Sc^*(z)=e^{-Q_b}z^{-b_0+1/2}\Sc^{*osc}(z)\,,
\quad \Sc^{*osc}(z)=:\exp\bigl(-\sum_{r\neq0}b_rz^{-r}\bigr):\,.
\end{align*}
Their Fourier modes $\Sc_n,\Sc^*_n$ defined by 
\begin{align*}
\Sc(z)=\sum_{n\in\Z}\Sc_n z^{-n+\gamma-1/2}\,,
\quad 
\Sc^*(z)=\sum_{n\in\Z}\Sc^*_n z^{-n-\gamma+1/2}\,,
\end{align*}
are ordinary fermions satisfying
\begin{align*}
[\Sc_m,\Sc_n]_+=[\Sc^*_m,\Sc^*_n]_+=0\,,\quad
[\Sc_m,\Sc^*_n]_+=\delta_{m+n,0}\,.
\end{align*}
\medskip

\begin{lem}\label{lem:efFerm}
The currents of $\E_1(q_1,q_2,q_3)$ acting on $\Fb_l$
are expressed as
\begin{align*}
&(s_3-s_3^{-1})\Delta e(z) =s_2^{-2b_0-1}s_3^{-l}v_1\,
\Sc(s_1^{-1}s_2z)\Bigl(\Sc^*(s_3z)-\Sc^*(s_3^{-1}z)\Bigr)A(z)\,,\\
&(s_3-s_3^{-1})\Delta f(z) =s_2^{2b_0+1}s_3^lv_1^{-1}\,
\Sc(z)\Bigl(\Sc^*(q_2z)-\Sc^*(q_1^{-1}z)\Bigr)B(z)\,,
\end{align*}
where
\begin{align}
&A(z)=\exp\Bigl(-\sum_{r>0}(1-q_1^r)(1-q_2^r)q_3^ra_{-r}z^r\Bigr)
\exp\Bigl(-\sum_{r>0}(1-q_1^r)(1-q_2^r)s_3^r a_rz^{-r}\Bigr) \,,
\label{AA}
\\
&B(z)=\exp\Bigl(\sum_{r>0}(1-q_1^r)(1-q_2^r)s_3^ra_{-r}z^r\Bigr)
\exp\Bigl(\sum_{r>0}(1-q_1^r)(1-q_2^r) a_rz^{-r}\Bigr) \,.
\label{BB}
\end{align} 
\end{lem}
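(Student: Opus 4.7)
The plan is to verify both identities by substituting the free-field realization into the coproduct formulas and then performing a linear change of oscillators.

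First I would use the coproduct formulas $\Delta e(z) = e(z)\otimes 1 + \psi^-(z)\otimes e(s_1 z)$ and $\Delta f(z) = f(s_2 z)\otimes \psi^+(z) + 1\otimes f(z)$, specializing $C_1 = s_1$, $C_2 = s_2$ according to the levels of $\F_1$ and $\F_2$. On each factor I would replace $e(z), f(z), \psi^\pm(z)$ by their free-field expressions, using \eqref{E1Fock-e}--\eqref{E1Fock-f} (with $q_1\leftrightarrow q_2$ swapped for color $1$), so that each summand becomes a product of a vertex operator on $\F_1$ and a vertex operator on $\F_2$.

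The central step is the change of oscillators. The substitution defining $a_r, b_r$ is designed so that $a_r = \Delta h_r$, which makes the Cartan currents depend only on the $a_r$'s (and hence so do the operators $A(z), B(z)$ of \eqref{AA}, \eqref{BB}), while $b_r$ is orthogonal to $a_r$ and has canonical Heisenberg commutation $[b_r, b_s] = -\delta_{r+s,0}/r$. A direct calculation using the defining relation of $h_r$ with $C = s_1$ on $\F_1$ and $C = s_2$ on $\F_2$ verifies these commutation relations. Under this substitution each of the four vertex operators arising in $\Delta e(z)$ (resp.\ $\Delta f(z)$) factors as (an $a$-exponential) $\times$ (a $b$-exponential), the $a$-parts assembling precisely into $A(z)$ (resp.\ $B(z)$) and the $b$-parts into normal-ordered products $:\Sc^{osc}(\cdot)\Sc^{*osc}(\cdot):$.

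The decisive observation is how the two coproduct summands combine. After the change of variables, both summands share a common $\Sc$-argument (namely $s_1^{-1}s_2 z$ for $\Delta e$ and $z$ for $\Delta f$), and differ only in the argument of the $\Sc^*$-factor. Pulling out the common $\Sc$ then yields $\Sc(s_1^{-1}s_2 z)\bigl(\Sc^*(s_3 z) - \Sc^*(s_3^{-1}z)\bigr)$ for $\Delta e(z)$ and $\Sc(z)\bigl(\Sc^*(q_2 z) - \Sc^*(q_1^{-1}z)\bigr)$ for $\Delta f(z)$, with the relative minus sign emerging from the contraction of $\psi^-(z)$ with the oscillator part of $e(s_1 z)$ (respectively of $\psi^+(z)$ with $f(s_2 z)$) after normal ordering. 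The zero-mode prefactors $s_2^{\mp(2b_0+1)}s_3^{\mp l}v_1^{\pm 1}$ are then fixed by testing both sides on $\ket{\emptyset}_{\Fb_l}$, using $b_0|_{\Fb_l} = \gamma+l$ and the shift $e^{Q_b}:\Fb_l\to\Fb_{l+1}$ built into $\Sc(z)$.

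The main obstacle is the bookkeeping: the constants $c_1^{-1}, c_2^{-1}$, the various powers of $s_1, s_2, s_3$ produced by the change of variables, the factor $\psi_0$, and the overall $(s_3-s_3^{-1})$ normalization must all conspire correctly. In particular, extracting the $-1$ sign in the differences $\Sc^*(s_3z) - \Sc^*(s_3^{-1}z)$ and $\Sc^*(q_2z)-\Sc^*(q_1^{-1}z)$ requires careful tracking of the normal ordering between $\psi^\mp(z)$ and the $e$- or $f$-currents of the opposite factor, together with the identity $q_1 q_2 q_3 = 1$ to match the remaining power of $z$ in the $\Sc^*$ arguments against the shift $z\mapsto s_1 z$ (resp.\ $z\mapsto s_2 z$) between the two coproduct terms.
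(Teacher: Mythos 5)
Your proposal is correct and coincides with the paper's own (unelaborated) proof, which consists of the single line ``This can be shown by a direct computation'': you substitute the free-field realizations into $\Delta e(z)=e(z)\otimes 1+\psi^-(z)\otimes e(s_1z)$ and $\Delta f(z)=f(s_2z)\otimes\psi^+(z)+1\otimes f(z)$, pass to the oscillators $a_r,b_r$, observe that the $a$-parts of the two summands coincide (giving $A(z)$, resp.\ $B(z)$) while the $b$-parts assemble into $\Sc(\cdot)\Sc^*(\cdot)$ with a common $\Sc$-argument, and fix the constants on the vacuum. The only quibble is your attribution of the relative sign to a ``contraction of $\psi^-(z)$ with the oscillator part of $e(s_1z)$'' --- these live on different tensor factors and commute; the sign actually comes from the normal-ordering constants $\langle\Sc(x)\Sc^*(y_i)\rangle$ and the prefactors $-c_1^{-1}v$, etc., of the two summands, but this does not affect the validity of the argument.
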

\begin{proof}
This can be shown by a direct computation.  
\end{proof}

We consider alongside the tensor products in the opposite order
\begin{align*}
\Fbb=\bigoplus_{l\in\Z}\Fbb_l\,, 
\quad \Fbb_l=\F_2(s_3^l v_2)\otimes\F_1(s_3^{-l}v_1)\,.
\end{align*}
We define the corresponding objects
$\bar a_r,\bar b_r$, $\bar \Sc(z)$, $\bar \Sc^*(z)$.
They are obtained from the formulas above 
by the replacement $q_1\leftrightarrow q_2$,
$v_1\leftrightarrow v_2$ and $\gamma\leftrightarrow -\gamma$.

\begin{prop}
There exists a unique  map $\Rv
:\Fb\to\Fbb$
satisfying the intertwining properties
\begin{align}
 &\Rv\,a_r \Rv^{-1}=\bar a_r\,,
\label{Rar}
\\
&(q_1^{b_0-1/2}-q_2^{-b_0+1/2})
\Rv\,\Sc(z)\Rv^{-1}
=\bar{\Sc}^{*}(q_1z)-\bar{\Sc}^{*}(q_2^{-1}z)\,,
\label{RSz}
\\
&\Rv
\bigl({\Sc}^{*}(q_1^{-1}z)-{\Sc}^{*}(q_2z)\bigr)
\Rv^{-1}
=
\bar{\Sc}(z)
(q_1^{b_0-1/2}-q_2^{-b_0+1/2})\,,
\label{RS*z}
\end{align}
and the normalization $\Rv\ket{\emptyset}_{\Fb_0}
=\ket{\emptyset}_{\Fb_0}$.

For each $l\in\Z$, $\Rv:\Fb_l\to\bar\Fb_l$ 
gives an intertwiner of $\E_1(q_1,q_2,q_3)$ modules.
\end{prop}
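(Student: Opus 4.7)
The plan is to establish uniqueness from the cyclicity of the generators on the vacuum, then construct $\Rv$ by factorizing through the commuting $a$- and $b$-sectors, and finally deduce the $\E_1$-intertwining property from Lemma \ref{lem:efFerm}.

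For uniqueness, note that on each summand $\Fb_l$ the operators $\{a_r\}_{r\neq 0}$ together with the fermionic modes $\{\Sc_n,\Sc^*_n\}_{n\in\Z}$ generate $\Fb_l$ cyclically from $\ket{\emptyset}_{\Fb_l}$: the $a_r$'s act irreducibly on the $a$-boson factor, while the CAR algebra of $\Sc_n,\Sc^*_n$ acts irreducibly on the $b$-Fock factor. Since any candidate $\Rv$ satisfies both \eqref{Rar} and the rules \eqref{RSz}--\eqref{RS*z}, its value on every monomial applied to the vacuum is determined recursively, and the initial datum $\Rv\ket{\emptyset}_{\Fb_0}=\ket{\emptyset}_{\Fbb_0}$ pins it down uniquely. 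Weight counting with respect to $b_0$ then forces $\Rv(\Fb_l)\subset\Fbb_l$. For existence, using $[a_r,b_s]=0$ and the decoupling of the zero-mode sectors, I will decompose $\Fb_l\simeq\cH_a\otimes\cH_{b,l}$ and analogously $\Fbb_l\simeq\bar\cH_a\otimes\bar\cH_{b,l}$, and seek $\Rv$ of the form $R_a\otimes R_b$. On the $a$-Heisenberg factor, $R_a$ is the standard unique bosonic intertwiner fixed by \eqref{Rar} and the vacuum normalization. On the $b$-sector, the prescription \eqref{RSz}--\eqref{RS*z} defines $R_b$ on the generators; the content of this step is to check that the prescription is compatible with the CAR of $\Sc(z),\Sc^*(z)$. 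This reduces to an algebraic identity in $q_1,q_2$ after expanding the $b_0$-dependent factor, using the invertibility of $q_1^{b_0-1/2}-q_2^{-b_0+1/2}$ (ensured by the assumption $2\gamma\notin\Z$) and the duality $\bar b_0\leftrightarrow -b_0$ that comes from $\gamma\leftrightarrow -\gamma$.

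For the $\E_1$-intertwining, Lemma \ref{lem:efFerm} expresses $\Delta e(z)$ and $\Delta f(z)$ on $\Fb_l$ polynomially in $\Sc(z),\Sc^*(z)$ and in $A(z),B(z)$, where $A,B$ depend only on the $a_r$'s. The same lemma under $q_1\leftrightarrow q_2$, $v_1\leftrightarrow v_2$, $\gamma\leftrightarrow-\gamma$ yields the corresponding formulas for the coproduct in the opposite order, acting on $\Fbb_l$. Substituting \eqref{Rar}, \eqref{RSz}, \eqref{RS*z} into $\Rv\Delta e(z)\Rv^{-1}$ and $\Rv\Delta f(z)\Rv^{-1}$, the $b_0$-dependent prefactors produced by \eqref{RSz}--\eqref{RS*z} must cancel precisely against the $s_2^{\pm 2b_0\pm 1}$ prefactors in Lemma \ref{lem:efFerm}, and after accounting for the spectral shifts encoded in $A(z),B(z)$ one obtains exactly the barred formula. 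The intertwining for $\psi^\pm(z)$ then follows from the defining relations of $\E_1$ together with \eqref{Rar}.

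The hard part will be the CAR-compatibility check in the existence step: although the cancellations are dictated by the natural $\gamma\leftrightarrow -\gamma$ duality, one must carefully manipulate the $q_1^{b_0-1/2}-q_2^{-b_0+1/2}$ factor and the shifted vertex-operator contractions at $z\mapsto q_1 z, q_2^{-1}z, q_1^{-1}z, q_2 z$ to confirm that no residual zero-mode phase appears. Once this consistency is established, the $e$--$f$ matching in the final step reduces to systematic bookkeeping of $s_2^{\pm 2b_0\pm 1}$ factors and spectral shifts and should not present further difficulty.
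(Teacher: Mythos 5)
Your proposal is correct and follows essentially the same route as the paper: uniqueness from irreducibility of $\Fb$ under the Heisenberg algebra $\{a_r\}$ and the fermions $\{\Sc_n,\Sc^*_n\}$, existence by checking that the prescribed images (in Fourier modes, $\Sc_n\mapsto c_n\bar\Sc^*_n$, $\Sc^*_n\mapsto\bar\Sc_n c_n^{-1}$ with the $b_0$-dependent scalar $c_n$) satisfy the same canonical anti-commutation relations, and the $\E_1$-intertwining by substituting \eqref{Rar}--\eqref{RS*z} into the fermionic formulas for $\Delta e(z)$, $\Delta f(z)$ of Lemma \ref{lem:efFerm} and matching with the barred versions. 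The paper compresses the CAR-compatibility check into one sentence, exactly the step you flag as the main computation to carry out.
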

\begin{proof}
 In terms of Fourier modes the intertwining properties 
\eqref{RSz} and \eqref{RS*z}
read
\begin{align*}
&\Rv\Sc_n \Rv^{-1}=(q_1^{b_0-1/2}-q_2^{-b_0+1/2})^{-1}
(q_1^{-n+\gamma-1/2}-q_2^{n-\gamma+1/2})
\cdot \bar{\Sc}^*_n\,,
\\
&\Rv\Sc^*_n \Rv^{-1}=
\bar{\Sc}_n\cdot
(q_1^{b_0-1/2}-q_2^{-b_0+1/2})
(q_1^{-n+\gamma-1/2}-q_2^{n-\gamma+1/2})^{-1}\,.
\end{align*}
Since the right hand sides satisfy the same anti-commutation relations
as do $\{\Sc_n,\Sc^*_n\}$, 
such an $\Rv$ exists. 
It is unique because $\Fb$ is irreducible under the action of 
the fermions  $\{\Sc_n,\Sc^*_n\}$ and the Heisenberg algebra 
generated by $\{a_r\}$. 
Combining \eqref{Rar}--\eqref{RS*z} we obtain
\begin{align*}
&\Rv\cdot
\Sc(s_1^{-1}s_2z)\Bigl(\Sc^*(s_3z)-\Sc^*(s_3^{-1}z)\Bigr)A(z)
\cdot \Rv^{-1} 
=\bar \Sc(z)
\bigl(\bar \Sc^*(s_3z)-\bar \Sc^*(s_3^{-1}z)\bigr)
\bar A(z)\,,
\\
&\Rv\cdot
\Sc(z)\bigl(\Sc^*(q_2z)-\Sc^*(q_1^{-1}z)\bigr)B(z)
\cdot \Rv^{-1} 
=\bar \Sc(z)
\bigl(\bar{\Sc}^*(q_1z)-\bar{\Sc}^*(q_2^{-1}z)\bigr)
\bar{B}(z)\,,
\end{align*}
where $\bar{A}(z),\bar{B}(z)$ are obtained from \eqref{AA}, \eqref{BB} 
replacing $a_r$ by $\bar a_r$.
Here we used also the anti-commutativity of $\bar \Sc(z)$, $\bar{\Sc}^*(w)$. 
Comparing this with Lemma \ref{lem:efFerm}, 
we conclude that $\Rv$ is an intertwiner of $\E_1$ modules. 
\end{proof}

The intertwiners $\Phi(z),\Phi^*(z)$ are related to the screening
currents as follows. 

On $\F_1(v_1)\otimes\F_2(v_2)$, we have
\begin{align}
&1\otimes\Phi^{*osc}(z)=\Sc^{osc}(q_1^{-1}z)T(z)\,,
\label{PhiS}
\\
&T(z)=\exp\Bigl(-\sum_{r>0}(1-q_2^r)s_3^r a_{-r}z^r\Bigr) 
\exp\Bigl(\sum_{r>0}(1-q_2^r)q_1^r a_rz^{-r}\Bigr)\,.
\nn
\end{align}
On $\F_2(v_2)\otimes\F_1(v_1)$, we have
\begin{align}
&\Phi^{osc}(z)\otimes 1
=\bar{\Sc}^{osc}(s_3z)\bar{T}^*(z)\,,
\label{Phi*S}
\\
&\bar{T}^{*}(z)=
\exp\Bigl(\sum_{r>0}(1-q_2^r)q_3^r \bar a_{-r}z^r\Bigr) 
\exp\Bigl(-\sum_{r>0}(1-q_2^r)s_1^rs_2^{-r}\bar a_rz^{-r}\Bigr)\,.
\nn
\end{align}

Define 
$\bar T(z)=\Rv T(z)\Rv^{-1}$, 
$T^*(z)=\Rv^{-1}\bar T^*(z)\Rv$. 
Inserting \eqref{PhiS}, \eqref{Phi*S} into the intertwining relations
\eqref{RSz}, \eqref{RS*z}, 
we obtain 
\begin{align*}
&\Rv(1\otimes \Phi^{*osc}(z))\Rv^{-1}
=\frac{1}{1-s_3v_2/v_1}
\Bigl(\bar{\Sc}^{*osc}(z)-\frac{s_3v_2}{v_1}
\bar{\Sc}^{*osc}(q_3z)\Bigr)\bar{T}(z)\,,
\\
&\Rv^{-1}(\Phi^{osc}(z)\otimes 1)\Rv
=
\frac{1}{1-s_3v_1/v_2}
\Bigl({\Sc}^{*osc}(q_2s_3z)-\frac{s_3v_1}{v_2}
{\Sc}^{*osc}(q_1^{-1}s_3z)\Bigr)T^*(z)\,.
\end{align*}

Further, a direct computation shows that
\begin{align*}
&\bar{\Sc}^{*osc}(z)\bar{T}(z)
=\Phi^{*osc}(s_1z)\otimes K^{-1}k_0^{+}(z)\,,\\
&\bar{\Sc}^{*osc}(q_3z)\bar{T}(z)
=\Phi^{*osc}(s_1^{-1}z)\otimes Kk_0^{-}(s_1^{-1}z)\,,\\
&\Sc^{*osc}(q_2s_3z)T^*(z)=Kk^{-}_0(z)\otimes\Phi^{osc}(s_1z)\,,\\ 
&\Sc^{*osc}(q_1^{-1}s_3z)T^*(z)=K^{-1}k^{+}_0(s_1^{-1}z)\otimes\Phi^{osc}(s_1^{-1}z)\,.
\end{align*}

Restoring zero mode, and using Lemma \ref{k on fock lem}, we arrive at 
\begin{prop}\label{prop:RPhiR}
The following relations hold on  
$\F_2(v_2)\otimes\F_1(v_1)$ or $\F_1(v_1)\otimes\F_2(v_2)$ respectively:
\begin{align*}
&\Rv(1\otimes\Phi^*(z))\Rv^{-1}
=\frac{1}{(s_3v_2/v_1)^{-1/2}-(s_3v_2/v_1)^{1/2}} 
\Bigl(\Phi^*(s_1z)\otimes k_0^+(z)+\Phi^{*}(s_1^{-1}z)\otimes
k_1^+(s_1^{-2}z)
\Bigr)\,,
\\
&
\Rv^{-1}(\Phi(z)\otimes 1)\Rv=
\frac{1}{(s_3v_1/v_2)^{-1/2}-(s_3v_1/v_2)^{1/2}} 
\Bigl(k_0^-(z)\otimes\Phi(s_1z)
+k_1^-(s_1^{-2}z)\otimes \Phi(s_1^{-1}z)\Bigr)\,.
\end{align*}
\qed
\end{prop}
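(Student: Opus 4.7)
The plan is to combine the two oscillator-level intertwining formulas stated just above the proposition with the four product-expansion identities for $\bar{\Sc}^{*osc}(z)\bar T(z)$, $\bar{\Sc}^{*osc}(q_3z)\bar T(z)$, $\Sc^{*osc}(q_2s_3z)T^*(z)$ and $\Sc^{*osc}(q_1^{-1}s_3z)T^*(z)$, and then to apply Lemma \ref{k on fock lem} to convert $k_0^\mp$ into $k_1^\pm$ on the $\F_1(v_1)$ factor.

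Concretely, I would substitute the first two identities into
$$\Rv(1\otimes \Phi^{*osc}(z))\Rv^{-1}=\frac{1}{1-s_3v_2/v_1}\Bigl(\bar{\Sc}^{*osc}(z)-\frac{s_3v_2}{v_1}\bar{\Sc}^{*osc}(q_3z)\Bigr)\bar T(z),$$
producing an expression $\tfrac{1}{1-s_3v_2/v_1}\bigl(\Phi^{*osc}(s_1z)\otimes K^{-1}k_0^+(z)-\tfrac{s_3v_2}{v_1}\Phi^{*osc}(s_1^{-1}z)\otimes K k_0^-(s_1^{-1}z)\bigr)$. Lemma \ref{k on fock lem}, applied on $\F_1(v_1)$, gives $k_0^-(s_1^{-1}z)=-k_1^+(s_1^{-2}z)$; the resulting minus sign cancels the one in front of $s_3v_2/v_1$, yielding the two-term sum on the right-hand side of the proposition up to normalization. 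The second formula is handled symmetrically using the remaining two product identities and $k_0^+(s_1^{-1}z)=-k_1^-(s_1^{-2}z)$.

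Finally, I would restore the zero-mode parts $e^{\pm Q}z^{\pm P+\beta/2}$, promoting $\Phi^{*osc}$, $\Phi^{osc}$ to $\Phi^*,\Phi$. The $K^{\pm 1}$ acting on $\F_1(v_1)$ evaluates to a definite scalar power of $s_1$ determined by $v_1$, and the $z^{\pm P+\beta/2}$ contributions on the Fock factors together produce exactly the factor $(s_3v_2/v_1)^{1/2}$ (respectively $(s_3v_1/v_2)^{1/2}$) needed to convert the prefactor $\tfrac{1}{1-s_3v_2/v_1}$ into the symmetric form $\tfrac{1}{(s_3v_2/v_1)^{-1/2}-(s_3v_2/v_1)^{1/2}}$. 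The main obstacle is purely bookkeeping: tracking how the $K^{\pm 1}$ scalars, the $e^{\pm Q}$ spectral-parameter shifts, and the $z^{\pm P+\beta/2}$ factors combine to give precisely the claimed $s_1^{\pm 1}$-shift in the argument of $\Phi^*,\Phi$ and the symmetric normalization, with no conceptual step beyond the two oscillator identities and the lemma.
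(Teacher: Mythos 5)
Your proposal is correct and follows essentially the same route as the paper: the paper likewise substitutes the four product identities for $\bar{\Sc}^{*osc}(z)\bar{T}(z)$, $\bar{\Sc}^{*osc}(q_3z)\bar{T}(z)$, $\Sc^{*osc}(q_2s_3z)T^*(z)$, $\Sc^{*osc}(q_1^{-1}s_3z)T^*(z)$ into the two displayed conjugation formulas, invokes Lemma \ref{k on fock lem} to trade $k_0^{\mp}(s_1^{-1}z)$ for $-k_1^{\pm}(s_1^{-2}z)$ on the $\F_1(v_1)$ factor, and then restores the zero modes to arrive at the symmetric prefactor. The only difference is that you spell out the sign cancellation and the bookkeeping of the $K^{\pm1}$ and $z^{\pm P+\beta/2}$ contributions, which the paper compresses into the single phrase ``restoring zero mode.''
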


\begin{rem}
{\rm Let $R_{12}$ (resp. $R_{21}$)
be the image of the universal R matrix of $\E_1$
on $\F_1(v_1)\otimes\F_2(v_2)$ (resp.  $\F_2(v_2)\otimes\F_1(v_1)$).
Since these tensor products are irreducible, we have
$\Rv=const. \sigma R_{12}$ (resp. $\Rv^{-1}=const. \sigma R_{21}$),
where $\sigma$ stands for the permutation $\sigma w_1\otimes w_2=w_2\otimes w_1$. 
Hence the left hand sides of the formulas in 
Proposition \ref{prop:RPhiR} can be written as
\begin{align*}
&\Rv(1\otimes\Phi^*(z))\Rv^{-1}=R_{21}^{-1}(\Phi^*(z)\otimes1)R_{21}\,,
\\
&\Rv^{-1}(\Phi(z)\otimes 1)\Rv=R_{12}^{-1}(1 \otimes \Phi(z))R_{12}\,.
\end{align*}
\qed
}\end{rem}

\section{Proof of Theorem \ref{thm:level2}
}\label{sec:Proof}

In this section we give a proof of Theorem  \ref{thm:level2}. 
We assume $M$ is odd, so that $X^\pm_i(z)$ are labeled by $i\in\Z$.
\medskip

\subsection{Properties of screened  intertwiners}\quad
We begin by studying the commutation relations of screened 
intertwiners.

Let
\begin{align*}
R(u;\beta,P)=
\begin{pmatrix}
1 & 0 & 0 &0 \\ 
0&\frac{[u+P]}{[u+\beta]}\frac{[\beta]}{[P]}
&\frac{[u]}{[u+\beta]}\frac{[\beta+P]} {[P]}&0\\
0&\frac{[u]}{[u+\beta]}\frac{[\beta-P]} {[-P]}
&\frac{[u-P]}{[u+\beta]}\frac{[\beta]}{[-P]}
&0\\
0&0&0&1\\
\end{pmatrix} \,.
\end{align*}
be the elliptic dynamical $R$ matrix.

\begin{prop}\label{commPhi}
Let $R(z)=R(u;\beta,P_{1,2})$ with $z=q_1^u$.
The operators ${\Phi}_\pm(z)$, ${\Phi}^*_\pm(z)$
satisfy the following commutation relations as meromorphic functions:
\begin{align}
&\Phi_{\epsilon_1}(z) \Phi_{\epsilon_2}(w) 
=\rho(z/w)
\sum_{\epsilon_1',\epsilon_2'=\pm}
\Phi_{\epsilon'_1}(w) \Phi_{\epsilon'_2}(z) 
R(z/w)_{\epsilon'_1,\epsilon_2';\epsilon_1,\epsilon_2}\,,
\label{PPR}
\\
&\Phi^*_{\epsilon_1}(z)\Phi^*_{\epsilon_2}(w) 
=\rho^*(z/w)
\sum_{\epsilon_1',\epsilon_2'=\pm}
\Phi^*_{\epsilon'_1}(w) \Phi^*_{\epsilon'_2}(z) 
R(z/w)_{\epsilon'_1,\epsilon_2';\epsilon_1,\epsilon_2}\,,
\label{P*P*R}
\\
&\Phi_{\epsilon_1}(z)\Phi^*_{\epsilon_2}(w) 
=
\sum_{\epsilon_1',\epsilon_2'=\pm}
\Phi^*_{\epsilon'_1}(w) \Phi_{\epsilon'_2}(z) 
R(z/w)_{\epsilon'_1,\epsilon_2';\epsilon_1,\epsilon_2}\,,
\label{PP*R}
\end{align} 
where
\begin{align}
&\rho(z)=z^{\beta}
\frac{(q_1z,q_1^{1-\beta}z^{-1};q_1)_\infty} 
{(q_1^{1-\beta}z, q_1 z^{-1};q_1)_\infty}\,,\quad
\rho^*(z)=z^{\beta}
\frac{(q_1^{\beta}z,z^{-1};q_1)_\infty} 
{(z, q_1^\beta z^{-1};q_1)_\infty}\,.
\label{rho}
\end{align}

Similarly, the same commutation relations hold wherein 
$\Phi_\pm(z)$, $\Phi^*_\pm(z)$
are replaced by  $\check{\Phi}_\pm(z)$, $\check{\Phi}^*_\pm(z)$,
$R(z)$ by $R(u;\check{\beta},\check{P}_{1,2})$, and 
\eqref{rho} by
\begin{align}
&\check{\rho}(z)=z^{\check{\beta}} 
\frac{(q_1^{\check{\beta}}z,z^{-1};q_1)_\infty} 
{(z, q_1^{\check{\beta}} z^{-1};q_1)_\infty}\,,\quad
\check{\rho}^*(z)=z^{\check{\beta}}
\frac{(q_1z,q_1^{1-\check{\beta}}z^{-1};q_1)_\infty} 
{(q_1^{1-\check{\beta}}z, q_1z^{-1};q_1)_\infty}\,.
\label{rhoc}
\end{align}
\end{prop}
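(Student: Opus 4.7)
The plan is to verify the commutation relations case by case on the signs $(\epsilon_1,\epsilon_2)$, treating the unscreened $(+,+)$ components directly and handling the remaining cases by contour integration. The essential inputs are the scalar contractions \eqref{cont1}--\eqref{cont4}, the quasi-periodicity \eqref{PhiPhi*1} relating $\Phi$ and $\Phi^*$, and a three-term identity for the theta symbol $[u]$ which produces the off-diagonal entries of $R(u;\beta,P)$. First I would dispose of the $(+,+)$ case: since $\Phi_+(z)=\Phi(z)\otimes\id$ and $\Phi_+(w)$ act only in the first tensor factor, $\Phi_+(z)\Phi_+(w)=\rho(z/w)\Phi_+(w)\Phi_+(z)$ follows from $\cont{\Phi(z)}{\Phi(w)}$ together with the zero mode factor $z^{P+\beta/2}$ in \eqref{Phiosc}; the exponent $\beta$ in \eqref{rho} comes from this zero mode. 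Similarly $\Phi^*_+(z)\Phi^*_+(w)$ yields the prefactor $\rho^*(z/w)$, while $\Phi_+(z)$ and $\Phi^*_+(w)$ trivially commute because they act in different tensor factors. This matches $R(u;\beta,P)_{++,++}=1$.

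Next I would treat the mixed case $\Phi_+(z)\Phi_-(w)$. Inserting the definition of $\Phi_-(w)$ as a contour integral over $\Phi_+(w)S(x)f(x/w;\beta,P_{1,2})$, I push $\Phi_+(z)$ through $\Phi_+(w)$ (first step) and through $S(x)=\Phi^*(x)\otimes\Phi(x)$ using the contractions \eqref{cont2} and the quasi-periodicity \eqref{PhiPhi*1}. Because $\Phi_+(z)$ carries the zero mode $e^Q$, the dynamical argument of $f$ is shifted by one unit upon commutation. The key manipulation is then a three-term identity, schematically of the form
\begin{align*}
f(x/w;\beta,P-1)\,\nu(x;z,w)=A(z/w;P)\,f(x/z;\beta,P)+B(z/w;P)\,f(x/w;\beta,P),
\end{align*}
where $\nu$ is the explicit scalar produced by the commutation of $\Phi_+(z)$ with $S(x)$ and $A,B$ are theta function ratios (a consequence of the Riemann relation for $[u]$). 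The two summands reassemble into $\Phi_+(w)\Phi_-(z)$ and $\Phi_-(w)\Phi_+(z)$ respectively, with coefficients one verifies to be precisely the $(+-)$ row of $R(u;\beta,P_{1,2})$. The $(-,+)$ case is symmetric.

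The most delicate case is $\Phi_-(z)\Phi_-(w)$, where both sides are double integrals over $S(x_1)S(x_2)$. Applying the three-term identity in each variable produces four candidate integrands which must match those of $R_{--,--}\Phi_-(w)\Phi_-(z)$, $R_{--,++}\Phi_+(w)\Phi_+(z)$, and the two off-diagonal terms. The main obstacle is the contour analysis: the contour $\Gamma$ must separate the two sets of poles in \eqref{sc-poles} consistently for both insertions, and the residual diagonal contribution at $x_1=x_2$ must vanish. The latter is automatic because the product of contractions $\cont{\Phi^*(x_1)}{\Phi^*(x_2)}\cdot\cont{\Phi(x_1)}{\Phi(x_2)}$ contains the factor $(x_2/x_1;q_1)_\infty$ from \eqref{cont1}, which has a simple zero on the diagonal, so $S(x_1)S(x_2)$ has no pole there. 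The remaining contour pieces are deformed into the opposite contour by crossing the poles listed in \eqref{sc-poles}; the residues picked up cancel pairwise by the three-term identity. Once \eqref{PPR} is established, \eqref{P*P*R} follows by the same argument with the contractions \eqref{cont1} and \eqref{cont3} interchanged, and \eqref{PP*R} follows because the contractions in \eqref{cont2} multiply to a $q_1$-periodic function, killing the overall scalar prefactor and producing the diagonal entries $1$. Finally the statements for $\check\Phi_\pm,\check\Phi^*_\pm$ are obtained by the formal substitution $\beta\leftrightarrow\check\beta$, $P_{1,2}\leftrightarrow\check P_{1,2}$, with the roles of $\Phi$ and $\Phi^*$ interchanged in the contractions to account for $|\cq_1|=|q_1^{-1}|>1$.
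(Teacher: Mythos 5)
The paper does not actually carry out this computation: its proof of Proposition \ref{commPhi} is a one-line citation to Lukyanov--Pugai \cite{LP}, asserting that the argument given there for the deformed Virasoro algebra goes through verbatim. What you have written is, in substance, a reconstruction of that cited argument, and the ingredients you name are the right ones: the $(+,+)$ components follow directly from the contractions \eqref{cont1}--\eqref{cont4} (your identification of the $z^\beta$ in $\rho$, $\rho^*$ with the zero-mode exchange is correct, as is the observation that $\Phi_+$ and $\Phi^*_+$ live in different tensor factors); the mixed and $(-,-)$ components reduce to moving vertex operators through the screening current and invoking the three-term (Riemann) identity for $[u]$, with the contour bookkeeping and the vanishing of the diagonal contribution of $S(x_1)S(x_2)$ handled as you say. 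Two points deserve more care than your sketch gives them. First, the dynamical shift produced by commuting the kernel $f(x/w;\beta,P_{1,2})$ past $\Phi_+(z)$ is by $\beta$ (via $[P,e^Q]=\beta e^Q$), not ``by one unit''; getting this shift right is exactly what makes the coefficients close on the entries of $R(u;\beta,P_{1,2})$ with their $[\beta]/[P]$ normalizations, and that matching is the real content of the computation, which you assert but do not execute. Second, for \eqref{PP*R} the absence of a scalar prefactor is not because the contractions ``kill'' it in general, but because the elliptic ($q_1$-periodic) coefficient in \eqref{PhiPhi*1} is absorbed into the theta-function entries of $R$, while the $(+,+)$ entry is $1$ for the trivial tensor-factor reason you already gave. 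Neither point is a wrong turn; they are the places where the delegated-to computation of \cite{LP} actually lives.
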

\begin{proof}
For the deformed Virasoro algebra,
this result was obtained in \cite{LP}.
The proof given there works in the present setting equally well.
\end{proof}

In addition we need information about the location of poles.
\begin{lem}\label{lem:location}
The operators
\begin{align}
&\Bigl(\frac{w}{z}\Bigr)^{-\beta}
\frac{(q_1w/z;q_1)_\infty}{(q_1^{1-\beta}w/z;q_1)_\infty}
\Phi_{\epsilon_1}(z)\Phi_{\epsilon_2}(w)
\,,
\label{PhiPhi}\\
&\Bigl(\frac{w}{z}\Bigr)^{-\beta}
\frac{(q_1^{\beta}w/z;q_1)_\infty}{(w/z;q_1)_\infty}
\Phi^*_{\epsilon_1}(z)\Phi^*_{\epsilon_2}(w)\,,
\label{Phi*Phi*}\\
&\Phi_{\epsilon_1}(z)\Phi^*_{\epsilon_2}(w)\,,
\quad \Phi^*_{\epsilon_1}(z)\Phi_{\epsilon_2}(w)\,,
\label{PhiPhi*}
\end{align} 
have poles only when 
$\epsilon_1+\epsilon_2=0$ and at $w/z=q_2 q_1^{-r}$, $r\ge0$.
All poles are simple.

Similarly, the operators
\begin{align}
&\Bigl(\frac{w}{z}\Bigr)^{-\check{\beta}}
\frac{(q_1^{\check{\beta}}w/z;q_1)_\infty}{(w/z;q_1)_\infty}
\check{\Phi}_{\epsilon_1}(z)\check{\Phi}_{\epsilon_2}(w)
\,,
\label{PhiPhi-c}\\
&\Bigl(\frac{w}{z}\Bigr)^{-\check{\beta}}
\frac{(q_1w/z;q_1)_\infty}{(q_1^{1-\check{\beta}}w/z;q_1)_\infty}
\check{\Phi}^*_{\epsilon_1}(z)\check{\Phi}^*_{\epsilon_2}(w)\,,
\label{Phi*Phi*-c}\\
&\check{\Phi}_{\epsilon_1}(z)\check{\Phi}^*_{\epsilon_2}(w)\,,
\quad
\check{\Phi}^*_{\epsilon_1}(z)\check{\Phi}_{\epsilon_2}(w)\,,
\label{PhiPhi*-c}
\end{align} 
have poles only when 
$\epsilon_1+\epsilon_2=0$ and at $z/w=\check{q}_2 q_1^{\check{r}}$, 
$\check{r}\ge0$.
All poles are simple.
\end{lem}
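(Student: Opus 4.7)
The plan is a case-by-case analysis on $(\epsilon_1,\epsilon_2)\in\{+,-\}^2$ for each of the three products \eqref{PhiPhi}, \eqref{Phi*Phi*} and \eqref{PhiPhi*}, using the integral definitions \eqref{scv}--\eqref{scv*} of the screened intertwiners together with the contractions \eqref{cont1}--\eqref{cont2}. The corresponding statements for the checked operators \eqref{PhiPhi-c}--\eqref{PhiPhi*-c} follow from the same argument upon replacing $q_i\mapsto\check q_i$, $\beta\mapsto\check\beta$ and $P_{1,2}\mapsto\check P_{1,2}$; the reversal $|\check q_1|=|q_1|^{-1}>1$ swaps the roles of small and large in the integration contour, which is why the pole locus appears as $z/w=\check q_2 q_1^{\check r}$ rather than $w/z=q_2 q_1^{-r}$.

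I would begin with the unscreened diagonal case $(+,+)$. The product $\Phi_+(z)\Phi_+(w)=\Phi(z)\Phi(w)\otimes\id$ normal-orders via \eqref{cont1} to a regular vertex operator times the scalar $z^\beta(q_1^{1-\beta}w/z;q_1)_\infty/(q_1 w/z;q_1)_\infty$, which is inverted exactly by the prefactor in \eqref{PhiPhi}: both the pole at $w/z=q_1^{-k-1}$ and the zero at $w/z=q_2 q_1^{-k}$ of $\Phi_+(z)\Phi_+(w)$ are killed by the corresponding zero and pole of the prefactor, producing a regular result. The analogous argument with \eqref{cont3} handles $(+,+)$ for \eqref{Phi*Phi*}, while the $\epsilon_1=\epsilon_2=+$ cases of \eqref{PhiPhi*} are immediate from the regularity of the mixed contraction \eqref{cont2}.

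The off-diagonal cases $(\epsilon_1,\epsilon_2)\in\{(+,-),(-,+)\}$ contain the substance of the proof, and by $z\leftrightarrow w$ symmetry it suffices to treat $(+,-)$. I would substitute the integral definition of $\Phi_-(w)$, pull $\Phi_+(z)$ inside the contour, and normal-order, so that the $\cont{\Phi(z)}{\Phi(w)}$ factor is cancelled by the prefactor in \eqref{PhiPhi}. A careful cancellation, using $q_1^{-(1+\beta)/2}=s_2^{-1}q_1^{-1}$ to match the zeros of $(q_1^{(1+\beta)/2}x/w;q_1)_\infty$ coming from $\cont{\Phi(w)}{\Phi^*(x)}$ against half of the poles of $f(x/w;\beta,P_{1,2})$ on $P_{1,2}$-eigenvectors, reduces the effective $x$-singularities to the three ladders $\{s_2 q_1^{-n}z\}_{n\ge 0}$, $\{s_2 q_1^{-n}w\}_{n\ge 0}$ and $\{s_2^{-1}q_1^{n}w\}_{n\ge 0}$, in accordance with \eqref{sc-poles}. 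Since $\Gamma$ is chosen to enclose the inside ladder $\{s_2^{-1}q_1^{n}w\}_{n\ge 0}$ and to exclude the two outside ladders, a pinch of $\Gamma$ occurs precisely when $s_2^{-1}q_1^{n}w=s_2 q_1^{-m}z$ for some $m,n\ge 0$, i.e.\ at $w/z=q_2 q_1^{-(m+n)}$; setting $r=m+n$ gives exactly the claimed locus, and simplicity of each pole follows from the genericity of $q_1,q_2$.

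The remaining diagonal case $(-,-)$ I would settle by combining the direct $x,y$-pinch analysis with the commutation relation of Proposition \ref{commPhi}: the identity $\Phi_-(z)\Phi_-(w)=\rho(z/w)\Phi_-(w)\Phi_-(z)$, together with the explicit zero/pole structure of $\rho$, forces the $(-,-)$ product to share the same pole/zero pattern in $w/z$ as the $(+,+)$ product, so the prefactor in \eqref{PhiPhi} again produces a regular result; the same mechanism handles $(-,-)$ for \eqref{Phi*Phi*} and \eqref{PhiPhi*}. The principal technical obstacle I anticipate is executing cleanly the cancellation between the elliptic $f$-poles and the $\Phi\Phi^*$-zeros on $P_{1,2}$-eigenvectors, and then verifying in the $(-,-)$ case that the a priori pinches in the $x$- and $y$-integrations conspire to reproduce only the pole/zero pattern dictated by the commutation relation.
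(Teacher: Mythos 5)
Your plan is correct and follows essentially the same route as the paper's proof: a case analysis on $(\epsilon_1,\epsilon_2)$, with the $(+,+)$ case read off from the contractions, the mixed-sign cases handled by locating the pinches of the screening contour (which occur exactly at $w/z=q_2q_1^{-r}$), and the $(-,-)$ case settled by combining the double-contour pinch analysis with the commutation relations of Proposition \ref{commPhi} to exclude the spurious poles at integer powers of $q_1$. The only cosmetic caveat is that $(+,-)$ and $(-,+)$ are not literally related by a $z\leftrightarrow w$ symmetry (the screening is attached to different factors), so both contours $\Gamma_\pm$ must be described, as the paper does — but the analysis is identical in structure.
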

\begin{proof}
The reasoning for \eqref{PhiPhi}, \eqref{Phi*Phi*} and \eqref{PhiPhi*}
are quite similar, so we consider only the last one.

Evidently there are no poles if $(\epsilon_1,\epsilon_2)=(+,+)$. 

Suppose 
$\epsilon_1+\epsilon_2=0$. 
Up to irrelevant constants and powers in $z,w$ we have
\begin{align*}
&\Phi_+(z)\Phi^*_-(w)\propto\int_{\Gamma_+}\frac{dx}{2\pi \sqrt{-1} x}
:\Phi_+(z)\Phi_+^*(w)S(x):
\frac{(s_2q_1x/z;q_1)_\infty \theta_{q_1}(q_1^{P_{1,2}}s_2^{-1}x/w)}
{(s_2^{-1}x/z,s_2^{-1}x/w,s_2^{-1}w/x;q_1)_\infty}\,,
\\
&\Phi_-(z)\Phi^*_+(w)\propto\int_{\Gamma_-}\frac{dx}{2\pi \sqrt{-1} x}
:\Phi_+(z)\Phi_+^*(w)S(x):
\frac{(s_2q_1w/x;q_1)_\infty \theta_{q_1}(q_1^{P_{1,2}}s_2^{-1}x/z)}
{(s_2^{-1}x/z,s_2^{-1}z/x,s_2^{-1}w/x;q_1)_\infty}\,.
\end{align*}
Here the contours are such that 
\begin{align*}
\Gamma_+\quad:&\quad\text{$\{s_2^{-1}q_1^n w\mid n\ge0\}$ are inside and
$\{s_2q_1^{-n}z, s_2q_1^{-n}w\mid n\ge 0\}$ are outside},\\
\Gamma_-\quad:&\quad\text{$\{s_2^{-1}q_1^n z,\ s_2^{-1}q_1^n w
\mid n\ge0\}$ are inside and
$\{s_2q_1^{-n}z\mid n\ge 0\}$ are outside}.
\end{align*}
In both cases, the integral is
holomorphic in $z,w$ unless
 $z/w=q_2^{-1}q_1^r$ with some $r\ge0$ where the contours get pinched.

In the case $(\epsilon_1,\epsilon_2)=(-,-)$ we have
\begin{align*}
&\Phi_-(z)\Phi_-^*(w)\propto
\int_{\Gamma_1}\frac{dx_1}{2\pi \sqrt{-1} x_1} 
\int_{\Gamma_2}\frac{dx_2}{2\pi \sqrt{-1} x_2}
:\Phi_+(z)\Phi^*_+(w)S(x_1)S(x_2): \\
&\times\frac{(s_2q_1x_2/z,s_2q_1w/x_1;q_1)_\infty
\theta_{q_1}(s_2^{-1}q_1^{P_{1,2}+\beta}x_1/z)
\theta_{q_1}(s_2^{-1}q_1^{P_{1,2}}x_2/w)
}
{(s_2^{-1}x_1/z,s_2^{-1}x_2/z,s_2^{-1}w/x_1,s_2^{-1}x_2/w,s_2^{-1}z/x_1,s_2^{-1}w/x_2;q_1)_\infty}
\bigl(1-\frac{x_2}{x_1}\bigr)
\frac{(q_2^{-1}x_2/x_1;q_1)_\infty}{(q_1q_2x_2/x_1;q_1)_\infty}\,.
\end{align*}
We choose $\Gamma_1$ to surround $\{s_2q_1^{-n}z, s_2q_1^{-n}w
\mid n\ge0\}$, and
$\Gamma_2$ to surround $\{s_2^{-1}q_1^nw\mid n\ge0\}$, keeping 
$\{s_2q_1^{-n}z, s_2q_1^{-n}w\mid n\ge0\}$ 
and $\{q_1^{-n-1}q_2^{-1}x_1\mid n\ge0\}$
outside $\Gamma_2$. 
The contours get pinched only when $z/w=q_1^{n}$ with some $n>0$. 
Hence poles of $\Phi_-(z)\Phi_-^*(w)$ are confined to 
$z/w=q_1^n$, $n>0$.

Similarly, $\Phi^*_-(w)\Phi_-(z)$ has poles only at $w/z=q_1^n$ with
$n>0$. 

However, by Proposition \ref{commPhi},
$\Phi_-(z)\Phi^*_-(w)=\Phi^*_-(w)\Phi_-(z)$ holds
as meromorphic functions. 
So these poles are actually absent. 

Statements about the checked version can be seen by noting that the 
kernels of the integrals giving \eqref{PhiPhi-c}, \eqref{Phi*Phi*-c} and 
\eqref{PhiPhi*-c} are obtained from those of 
\eqref{PhiPhi}, \eqref{Phi*Phi*} and \eqref{PhiPhi*} by 
keeping $q_1$ fixed and changing 
$\beta$ to $\check{\beta}$, which amounts to changing $q_2$ to 
$\check{q}_2^{-1}$.
\end{proof}
\medskip

We compute the residues of \eqref{PhiPhi*} and \eqref{PhiPhi*-c}.
Recall the definition of $\tilde{\Phi}_\pm(z)$, $\tilde{\Phi}^*_\pm(z)$
in \eqref{Phitilde2}.
\begin{lem}\label{lem:EFres}
For $r,\check{r}\ge0$,
we have
\begin{align}
&
\mathop{\mathrm{Res}}_{w=q_1^{-1+\beta-r}z}
\tilde{\Phi}_\pm(z)\tilde{\Phi}_{\mp}^*(w)\frac{dw}{w}
=\pm\frac{B^2}{[\beta][P_{1,2}]}\Delta k^-_r(q_1^{-r}z)\,,
\label{ResPP*}\\&\mathop{\mathrm{Res}}_{z=q_1^{-1+\beta-r}w}
\tilde{\Phi}_{\pm}^*(w)\tilde{\Phi}_\mp(z)\frac{dz}{z}
= \pm\frac{B^2}{[\beta][P_{1,2}]}
\Delta k_r^+(q_1^{-r}w)\,,
\label{ResP*P}
\\
&\mathop{\mathrm{Res}}_{{w}= q_1^{1-\check{\beta}+\check{r}}{z}}
\check{\Phi}_\pm^*({w})\check{\Phi}_{\mp}({z})
\frac{d{w}}{{w}}
=\check{B}^2\frac{[\check{\beta}\mp \check{P}_{1,2}]}{[\check{\beta}]}
\Delta \check{k}_{\check{r}}^-(\cq_1^{-\check{r}}{z})
\,,
\label{ResP*Pc}
\\
&
\mathop{\mathrm{Res}}_{{z}= q_1^{1-\check{\beta}+\check{r}}{w}}
\check{\Phi}_{\pm}({z})\check{\Phi}_\mp^*({w})
\frac{d{z}}{{z}}
=\check{B}^2\frac{[\check{\beta}\mp\check{P}_{1,2}]}{[\check{\beta}]}
\Delta \check{k}_{\check{r}}^+(\cq_1^{-\check{r}}{w})
\,.
\label{ResPPc*}
\end{align}
Here $\Delta k^\pm_r(z)$ stands for
 the action of $k^\pm_r(z)$ on $\F_2(v_1)\otimes\F_2(v_2)$, 
and similarly for $\Delta \check{k}^\pm_{\check{r}}(z)$.
\end{lem}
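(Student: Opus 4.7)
The plan is to reduce each of the four residue identities to the single-Fock-space identities of Lemma~\ref{lem:PhiK}, assembled according to the coproduct formula of Lemma~\ref{lem:DeltaK}. The guiding principle is that the screened intertwiners $\Phi_\pm(z)$, $\Phi^*_\pm(w)$ on the two-fold tensor product $\F_2(v_1)\otimes\F_2(v_2)$ are the natural tensor-product analogues of the elementary $\Phi(z)$, $\Phi^*(w)$ on a single Fock module: just as $\Phi(z)\Phi^*(w)$ has a simple pole at $w=q_2q_1^{-r}z$ with residue proportional to $k^\pm_r(\cdot)$ via \eqref{PPK3}, the product $\tilde\Phi_\pm(z)\tilde\Phi^*_\mp(w)$ should have its pole residue equal to $\Delta k^\pm_r(\cdot)$ up to an overall normalization.

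For \eqref{ResPP*} I would substitute the integral definition \eqref{scv*} of $\Phi^*_-(w)$ (or \eqref{scv} of $\Phi_-(z)$ for the opposite sign choice) and use the factorization $\Phi_+(z)\Phi^*_+(w)S(x)=\bigl(\Phi(z)\Phi^*(x)\bigr)\otimes\bigl(\Phi^*(w)\Phi(x)\bigr)$ to rewrite $\tilde\Phi_\pm(z)\tilde\Phi^*_\mp(w)$ as a single-variable contour integral whose integrand factors as a product of two-point functions, one in each tensor factor, weighted by the theta-ratio $f(\,\cdot\,;\beta,P_{1,2})$. By Lemma~\ref{lem:location} the only pole in $w$ lies at $w=q_1^{-1+\beta-r}z=q_2q_1^{-r}z$; it arises from pinchings of the integration contour by colliding poles contributed by the two tensor factors, and the pinching configurations are indexed by $(r_1,r_2)$ with $r_1+r_2=r$, $r_i\ge0$. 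At each such configuration the first tensor factor evaluates via \eqref{PPK1} or \eqref{PPK3} to produce a shifted $k^-_{r_1}$ current, and the second tensor factor similarly produces $k^-_{r_2}$.

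Summing the pinching contributions in the form $\sum_{r_1+r_2=r}k^-_{r_1}(\cdot)\otimes k^-_{r_2}(\cdot)$ recovers $\Delta k^-_r(q_1^{-r}z)$ on $\F_2(v_1)\otimes\F_2(v_2)$ via Lemma~\ref{lem:DeltaK}, using that $\psi_0$ acts trivially on Fock modules. The prefactor $1/[P_{1,2}]^2$ from the tilde normalization \eqref{Phitilde2} combines with the value of $f(x/w;\beta,P_{1,2})$ at the pinching to yield the coefficient $B^2/([\beta][P_{1,2}])$, while the overall sign $\pm$ in the statement comes from the outer sign convention of $\tilde\Phi_\pm$ and $\tilde\Phi^*_\pm$ in \eqref{Phitilde2}. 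Identity \eqref{ResP*P} is treated in exactly the same way but with $\Phi^*$ to the left of $\Phi$, so that the alternative identities in \eqref{PPK1} and \eqref{PPK3} produce $k^+_r$ rather than $k^-_r$.

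The checked residues \eqref{ResP*Pc}--\eqref{ResPPc*} follow by the same scheme applied to the checked screened intertwiners, with $(\beta,P_{1,2})$ replaced by $(\check\beta,\check P_{1,2})$ and $\check q_1=q_1^{-1}$ in place of $q_1$; the pole location $w=q_1^{1-\check\beta+\check r}z$ is the direct translation of $w=q_1^{-1+\beta-r}z$ under this substitution. Since the checked intertwiners are \emph{not} rescaled by $[\check P_{1,2}]$, the dependence on $\check P_{1,2}$ survives in the coefficient $[\check\beta\mp\check P_{1,2}]/[\check\beta]$, which is precisely the pinching-point value of the checked weight $f(x/w;\check\beta,\check P_{1,2})$. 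The main obstacle throughout is the careful bookkeeping of signs and normalizations: the orientation of the screening contour, the $\pm$ signs in \eqref{Phitilde2}, the minus signs in \eqref{PPK3}--\eqref{PPK4}, and the quasi-periodicity $[u+1]=-[u]$ each contribute a sign that must be tracked through the pinching sum. Once these are correctly accounted for, the structural identification with $\Delta k^\pm_r$ is immediate from Lemma~\ref{lem:DeltaK}.
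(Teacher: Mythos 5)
Your proposal is correct and follows essentially the same route as the paper: rewrite the screened intertwiner as a contour integral with the factorized kernel, identify the residue at $w=q_2q_1^{-r}z$ as a sum of $r+1$ contour-pinching contributions, evaluate each via Lemma~\ref{lem:PhiK}, and reassemble the sum $\sum_{l}k^-_l\otimes k^-_{r-l}$ into $\Delta k^-_r$ via Lemma~\ref{lem:DeltaK}, tracking the theta-ratio values and the tilde normalization to fix the constant. This matches the paper's argument step for step.
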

\begin{proof}
The proof being very similar, we show \eqref{ResPP*} 
as an example.
We set $z=q_1^u$, $w=q_1^v$, $x=q_1^t$, and consider
\begin{align}
&\Phi_+(z)\Phi^*_-(w)=\int_{\Gamma_+}
\Phi(z)\Phi^*(x)\otimes\Phi^*(w)\Phi(x)\frac{[t-v+(1-\beta)/2+P_{1,2}]}
{[t-v+(1+\beta)/2]}\frac{dx}{2\pi \sqrt{-1} x}\,,
\label{Ph+Ph*-}\\ 
&\Phi_-(z)\Phi^*_+(w)=\int_{\Gamma_-}
\Phi(z)\Phi^*(x)\otimes\Phi(x)\Phi^*(w)
\frac{[t-u+(1+\beta)/2+P_{1,2}]}
{[t-u+(1+\beta)/2]}\frac{dx}{2\pi \sqrt{-1} x}\,.
\label{Ph-Ph*+} 
\end{align}
Here, for both $\Gamma_+$ and $\Gamma_-$, 
the points $\{s_2^{-1}q_1^nw\mid n\ge0\}$ are inside 
and $\{s_2q_1^{-n}z\mid n\ge0\}$ are outside. In addition,
$\{s_2q_1^{-n}w\mid n\ge0\}$ are outside $\Gamma_+$ and 
$\{s_2^{-1}q_1^nz\mid n\ge0\}$ are inside $\Gamma_-$.

As $w\to q_2q_1^{-r}z$, the contours get pinched by the 
poles $\{s_2^{-1}q_1^n w\mid 0\le n\le r\}$
and $\{s_2q_1^{-n}z\mid 0\le n\le r\}$. 
Hence the leading terms of the integrals are given by
\begin{align*}
&\Phi_+(z)\Phi_-^*(w)=
-\sum_{l=0}^r \mathop{\mathrm{Res}}_{x=s_2q_1^{-l}z}
\Phi(z)\Phi^*(x)\frac{dx}{x}
\otimes\Phi^*(w)\Phi(s_2q_1^{-l}z)
\,\frac{[u-v+P_{1,2}]}{[u-v+\beta]} +O(1)\,, 
\\
&\Phi_-(z)\Phi_+^*(w)=
-\sum_{l=0}^r \mathop{\mathrm{Res}}_{x=s_2q_1^{-l}z}
\Phi(z)\Phi^*(x)\frac{dx}{x}
\otimes \Phi(s_2q_1^{-l}z)\Phi^*(w)
\,\frac{[P_{1,2}+\beta]}{[\beta]} +O(1)\,. 
\end{align*}
Here we use $[u+1]=-[u]$. 

Applying \eqref{PPK1}, \eqref{PPK3} in Lemma \ref{lem:PhiK},
and using the coproduct of $k^-_r(z)$ given in Lemma \ref{lem:DeltaK},
we obtain
\begin{align*}
\mathop{\mathrm{Res}}_{w=q_2q_1^{-r}z}		      
\Phi_+(z)\Phi_-^*(w)\frac{dw}{w}
&=\sum_{l=0}^rB k_l^-(q_1^{-l}z)\otimes Ak_{r-l}^-(s_2q_1^{-r})
\mathop{\mathrm{Res}}_{w=q_2q_1^{-r}z}\frac{[u-v+P_{1,2}]}{[u-v+\beta]}
\frac{dw}{w}\\
&=B^2\frac{[P_{1,2}-\beta]}{[\beta]}\Delta k_r^-(q_1^{-r}z)\,,
\\
\mathop{\mathrm{Res}}_{w=q_2q_1^{-r}z}
\Phi_-(z)\Phi_+^*(w)\frac{dw}{w}
&=-\sum_{l=0}^rBk_l^-(q_1^{-l}z)\otimes Bk_{r-l}^-(s_2q_1^{-r})
\frac{[P_{1,2}+\beta]}{[\beta]}
\\
&=-B^2\frac{[P_{1,2}+\beta]}{[\beta]}\Delta k_r^-(q_1^{-r}z)\,.
\end{align*}
Noting further that 
\begin{align*}
\tilde{\Phi}_{\pm}(z)\tilde{\Phi}^*_{\mp}(w)
={\Phi}_{\pm}(z){\Phi}^*_{\mp}(w)\frac{1}{[P_{1,2}][P_{1,2}\mp\beta]}
\end{align*}
we arrive at \eqref{ResPP*}.
\end{proof}
\medskip

\begin{lem}\label{lem:EFvalue}
For $r,\check{r}\ge0$ we have
\begin{align}
&\Phi_+(q_2q_1^{-r}w)\Phi_-^*(w) 
-\Phi_-(q_2q_1^{-r}w)\Phi_+^*(w) 
=A B \frac{[P_{1,2}]}{[\beta]}
\Delta k_r^+(q_1^{-r}w)\,,
\label{valPP}\\
&\check{\Phi}_+(z)\check{\Phi}_-^*(\cq_2\cq_1^{-\check{r}}z) 
-\check{\Phi}_-(z)\check{\Phi}_+^*(\cq_2\cq_1^{-\check{r}}z) 
=\check{A}\check{B} \frac{[\check{P}_{1,2}]}{[\check{\beta}]}
\Delta k_{\check{r}}^-(\cq_1^{-\check{r}}z)\,.
\label{valcPP}
\end{align}
\end{lem}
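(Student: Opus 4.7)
My plan is to adapt the method of Lemma~\ref{lem:EFres} to the present situation. The essential new feature is that we are evaluating at a \emph{regular} point $z=q_2q_1^{-r}w$ of $\Phi_\pm(z)\Phi^*_\mp(w)$ rather than at a pinching singularity --- by Lemma~\ref{lem:location} this point is indeed regular since $\beta$ is generic --- so each individual product is finite, and the right-hand side of \eqref{valPP} must emerge from a contour-deformation argument rather than from the leading term of a diverging residue.

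First, I will rewrite both products as contour integrals of one and the same operator against two different scalar kernels. Starting from \eqref{scv}, \eqref{scv*} and using \eqref{PhiPhi*1} to replace $\Phi(x)\Phi^*(w)$ in the second integral by $\tau(w/x)\Phi^*(w)\Phi(x)$, where $\tau(y)=y^{\beta}\theta_{q_1}(q_1^{(1+\beta)/2}y)/\theta_{q_1}(q_1^{(1-\beta)/2}y)$, one obtains
\begin{align*}
\Phi_+(z)\Phi^*_-(w)&=\int_{\Gamma_+}\Phi(z)\Phi^*(x)\otimes\Phi^*(w)\Phi(x)\cdot f(x/w;\beta,P_{1,2})\,\frac{dx}{2\pi\sqrt{-1}\,x},\\
\Phi_-(z)\Phi^*_+(w)&=\int_{\Gamma_-}\Phi(z)\Phi^*(x)\otimes\Phi^*(w)\Phi(x)\cdot\tau(w/x)f(x/z;\beta,P_{1,2})\,\frac{dx}{2\pi\sqrt{-1}\,x}.
\end{align*}
The difference $G(z,w):=\Phi_+(z)\Phi^*_-(w)-\Phi_-(z)\Phi^*_+(w)$ is then the integral of a single operator against the difference of two scalar kernels.

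At the special point $z=q_2q_1^{-r}w=s_2^{2}q_1^{-r}w$, the operator factor $\Phi^*(w)\Phi(x)$ has simple poles at $x=s_2q_1^{-n}w$ for $n\ge 0$, and the second kernel $\tau(w/x)f(x/z;\beta,P_{1,2})$ carries additional poles at $x=s_2q_1^{l-r}w$ coming from the denominator of $f(x/z)$. The first $r+1$ such poles, namely $x=s_2q_1^{l-r}w$ for $l=0,\dots,r$, sit strictly between $\Gamma_+$ and $\Gamma_-$; deforming one contour into the other will reduce $G(q_2q_1^{-r}w,w)$ to a finite sum of residues at these $r+1$ points. Each residue is then evaluated via Lemma~\ref{lem:PhiK}: by \eqref{PPK1}, $\Phi(z)\Phi^*(s_2q_1^{l-r}w)=A\,k^+_l(s_2q_1^{-r}w)$ at $z=q_2q_1^{-r}w$, since $s_2\cdot s_2q_1^{-r}w=q_2q_1^{-r}w=z$, while by \eqref{PPK3}, $\mathop{\mathrm{Res}}_{x=s_2q_1^{l-r}w}\Phi^*(w)\Phi(x)\,\tfrac{dx}{x}=-B\,k^+_{r-l}(q_1^{l-r}w)$. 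Invoking the coproduct formula of Lemma~\ref{lem:DeltaK} with $\psi_0=1$ and $C_2=s_2$ on $\F_2$ will then reassemble the sum over $l$ into $\Delta k^+_r(q_1^{-r}w)$, establishing \eqref{valPP} up to an overall constant. The checked identity \eqref{valcPP} should follow by a completely parallel argument, using \eqref{cont3}--\eqref{cont4} and equations \eqref{PPK2}, \eqref{PPK4} of Lemma~\ref{lem:PhiK} in place of \eqref{PPK1}, \eqref{PPK3}, with $(A,B,\beta,q_2)$ replaced by $(\check A,\check B,\check\beta,\check q_2)$ throughout.

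The main obstacle I expect is the final normalization. It will be necessary to evaluate the two scalar kernels at each of the $r+1$ residue points, keeping careful track of the dynamical shifts of $P_{1,2}$ that arise from commuting $\Phi^*(w)$ and $S(x)$ past one another, and then verify that the $l$-dependence cancels to leave the pure constant $AB\,[P_{1,2}]/[\beta]$. This collapse will rest on a Jacobi-type three-term identity for the theta-like function $[u]$, analogous to the one implicit in the proof of Lemma~\ref{lem:EFres}, and isolating it cleanly in the presence of the dynamical shifts is where the bulk of the bookkeeping will lie.
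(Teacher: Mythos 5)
Your route is the same as the paper's: rewrite both products as contour integrals of the common operator $\Phi(z)\Phi^*(x)\otimes\Phi^*(w)\Phi(x)$ against scalar kernels (using \eqref{PhiPhi*1} to reorder $\Phi(x)\Phi^*(w)$ in the second one), specialize to $z=q_2q_1^{-r}w$, collect residues at the $r+1$ points $x=s_2q_1^{-l}w$, $0\le l\le r$, evaluate them with Lemma \ref{lem:PhiK}, and reassemble the sum into $\Delta k^+_r(q_1^{-r}w)$ via Lemma \ref{lem:DeltaK}. Your identification of the residue points and of the operator factors $A\,k^+_l(s_2q_1^{-r}w)$ and $-B\,k^+_{r-l}(q_1^{l-r}w)$ is correct and matches the coproduct exactly.

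The one step that does not follow as written is the reduction ``deforming one contour into the other will reduce $G$ to a finite sum of residues.'' With two \emph{different} kernels $K_+$ and $K_-$ one has
$\int_{\Gamma_+}\mathcal{O}K_+-\int_{\Gamma_-}\mathcal{O}K_-
=\bigl(\int_{\Gamma_+}-\int_{\Gamma_-}\bigr)\mathcal{O}K_-+\int_{\Gamma_+}\mathcal{O}(K_+-K_-)$,
and the leftover integral is not a residue sum. What makes the argument work --- and what the paper's proof turns on --- is that at $z=q_2q_1^{-r}w$ the two kernels coincide \emph{identically} as functions of $x$: after commuting $f(x/z;\beta,P_{1,2})$ past $\Phi^*(w)$ (which shifts $P_{1,2}\to P_{1,2}+\beta$, a point you rightly flag), the second kernel is
$\frac{[t-v+\frac{1-\beta}{2}]}{[t-v+\frac{1+\beta}{2}]}\cdot
\frac{[t-u+\frac{1+\beta}{2}+P_{1,2}]}{[t-u+\frac{1+\beta}{2}]}$,
and since $u=v+\beta-1-r$ the relation $[u+1]=-[u]$ collapses this to the first kernel $\frac{[t-v+\frac{1-\beta}{2}+P_{1,2}]}{[t-v+\frac{1+\beta}{2}]}$. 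You should state and verify this; without it the reduction to residues is a non sequitur. Once it is in place, the difficulty you anticipate at the end evaporates: the common kernel evaluated at $x=s_2q_1^{-l}w$ equals $\frac{[P_{1,2}-l]}{[\beta-l]}=\frac{[P_{1,2}]}{[\beta]}$, which is already $l$-independent, so no Jacobi-type three-term identity is needed and the constant $AB\,[P_{1,2}]/[\beta]$ comes out directly.
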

\begin{proof}
The two cases being similar, we show only \eqref{valPP}. 
We retain the notation in the proof of Lemma \ref{lem:EFres}.
Specializing \eqref{Ph+Ph*-} 
to $z=q_2q_1^{-r}w$, we obtain
\begin{align}
&\Phi_+(q_2q_1^{-r}w)\Phi^*_-(w)=
\int_{\Gamma_+} \frac{dx}{2\pi\sqrt{-1}x}
:\Phi(z)\Phi^*(x)\otimes\Phi^*(w)\Phi(x):\label{PP*special}\\
&\times
(q_2q_1^{-r}w^2)^{-\beta}
\frac{1}{(s_2^{-1}x/w;q_1)_{r+1}}
\frac{(s_2q_1 x/w;q_1)_\infty}{(s_2^{-3}q_1^rx/w;q_1)_\infty}
\frac{[t-v+(1-\beta)/2+P_{1,2}]}{[t-v+(1+\beta)/2]}\,.
\nn
\end{align}
On the other hand, by using \eqref{PhiPhi*1}, \eqref{Ph-Ph*+}  
can be rewritten as
\begin{align*}
&\Phi_-(z)\Phi^*_+(w)=\int_{\Gamma_-}
\Phi(z)\Phi^*(x)\otimes\Phi^*(w)\Phi(x)
\frac{[t-v+(1-\beta)/2]}{[t-v+(1+\beta)/2]}
\frac{[t-u+(1+\beta)/2+P_{1,2}]}
{[t-u+(1+\beta)/2]}\frac{dx}{2\pi \sqrt{-1} x}\,.
\end{align*}
Upon specialization to $z=q_2q_1^{-r}w$, the integrand becomes the same
as that of \eqref{Ph+Ph*-}.

It follows that
\begin{align*}
&\Phi_+(q_2q_1^{-r}w)\Phi^*_-(w)-\Phi_-(q_2q_1^{-r}w)\Phi^*_+(w)
=-\sum_{l=0}^r \mathop{\mathrm{Res}}_{x=s_2q_1^{-l}w}\frac{dx}{x}
:\Phi(q_2q_1^{-r}w)\Phi^*(x)\otimes\Phi^*(w)\Phi(x):
\\
&\times(q_2q_1^{-r}w^2)^{-\beta}
\frac{1}{(s_2^{-1}x/w;q_1)_{r+1}}
\frac{(s_2q_1 x/w;q_1)_\infty}{(s_2^{-3}q_1^rx/w;q_1)_\infty}
\frac{[t-v+(1-\beta)/2+P_{1,2}]}{[t-v+(1+\beta)/2]}\,.
\end{align*}
We compute the right hand side substituting
\begin{align*}
:\Phi(s_2z)\Phi^*(q_1^rz):=s_2^{\beta/2}z^{\beta}
\frac{(q_1;q_1)_r}{(q_2^{-1};q_1)_r}k^+_r(z)\,,
\end{align*}
which follows from \eqref{PPK1}, to obtain
\begin{align*}
&
\frac{(q_2q_1;q_1)_\infty} {(q_2^{-1};q_1)_\infty} 
\frac{[P_{1,2}]}{[\beta]}
\sum_{l=0}^rk^+_{r-l}(s_2q_1^{-r}w)\otimes k_l^+(q_1^{-l}w)
\\
&=AB \frac{[P_{1,2}]}{[\beta]}
\Delta k^+_r(q_1^{-r}w)\,.
\end{align*}
\end{proof}

\subsection{Proof}\quad
Now we move to the proof of Theorem \ref{thm:level2}. 

\begin{lem}\label{lem:rel-rat}
We have identities as rational functions
\begin{align}
&
\gamma_{i,j}(z,w)X^+_i(z)X^+_j(w) 
=
(-1)^{i-j-1}\gamma_{j,i}(w,z)X^+_j(w)X^+_i(z)\,,
\label{EE2222}
\\
&
\gamma_{i,j}(w,z)
X^-_i(z)X^-_j(w)
=
(-1)^{i-j-1}\gamma_{j,i}(w,z)X^-_j(w)X^-_i(z)\,,
\label{FF222}
\\
&X^+_i(z)X^-_j(w)=X^-_j(w)X^+_i(z)\,.
\label{EF2222}
\end{align} 
\end{lem}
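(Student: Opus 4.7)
The plan is to reduce the three identities to the commutation relations of Proposition~\ref{commPhi} together with a contraction identity for the product of two elliptic dynamical $R$-matrices with complementary parameters. First I would exploit the fact that $\tilde\Phi_\epsilon(z)$ acts only on $\F_2(v_1)\otimes\F_2(v_2)$ while $\check\Phi_\eta(w)$ acts only on $\check\F_2(\check v_1)\otimes\check\F_2(\check v_2)$, so they commute across factors. Thus
\[
X^+_i(z)X^+_j(w)=\sum_{\epsilon_1,\epsilon_2=\pm}\tilde\Phi_{\epsilon_1}(z)\tilde\Phi_{\epsilon_2}(w)\,\check\Phi_{\epsilon_1}(q_1^iz)\check\Phi_{\epsilon_2}(q_1^jw),
\]
and similarly for the other products. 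Applying \eqref{PPR} to the tilde factors (with the transposed $R$-matrix coming from \eqref{Phitilde2}) and the $\check\E_1$-analogue to the check factors moves $\tilde\Phi_{\epsilon_2}(w)$ past $\tilde\Phi_{\epsilon_1}(z)$ and $\check\Phi_{\epsilon_2}(q_1^jw)$ past $\check\Phi_{\epsilon_1}(q_1^iz)$ at the cost of scalar prefactors $\rho(z/w)\check\rho(q_1^{i-j}z/w)$ and the contraction
\[
C^{\epsilon'_1\epsilon'_2;\,\epsilon''_1\epsilon''_2}(u,i-j)=\sum_{\epsilon_1,\epsilon_2}R(u;\beta,P_{1,2})_{\epsilon'_1\epsilon'_2;\epsilon_1\epsilon_2}\,R(u+i-j;\check\beta,\check P_{1,2})_{\epsilon''_1\epsilon''_2;\epsilon_1\epsilon_2},
\]
where $u=\log_{q_1}(z/w)$, arising from summing the diagonally paired indices of $X^+_i$.

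The central claim is then that under the constraints $\beta+\check\beta=M+1$ (with $M$ odd) and $P_{1,2}+\check P_{1,2}\in L+(M+1)\Z$ from \eqref{P12P12}, the tensor $C^{\epsilon'_1\epsilon'_2;\epsilon''_1\epsilon''_2}(u,i-j)$ is diagonal, equal to $c(u,i-j)\,\delta_{\epsilon'_1,\epsilon''_1}\delta_{\epsilon'_2,\epsilon''_2}$ for an explicit theta-ratio $c(u,i-j)$. Granting this, the four-term sum collapses to $c(u,i-j)\,X^+_j(w)X^+_i(z)$, yielding $X^+_i(z)X^+_j(w)=\rho(z/w)\check\rho(q_1^{i-j}z/w)\,c(u,i-j)\,X^+_j(w)X^+_i(z)$. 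Substituting \eqref{rho}, \eqref{rhoc} and simplifying, one should verify case-by-case (matching the three regimes $i=j$, $0<j-i\le M$, and $j-i>M$) that the resulting prefactor equals $(-1)^{i-j-1}\gamma_{j,i}(w,z)/\gamma_{i,j}(z,w)$ from \eqref{gii}--\eqref{gij2}, establishing \eqref{EE2222}. The identity \eqref{FF222} follows by the same strategy with \eqref{P*P*R} in place of \eqref{PPR}.

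For \eqref{EF2222}, the same diagonal-contraction argument applied to \eqref{PP*R} (which carries no scalar $\rho$-prefactor) directly gives the meromorphic equality $X^+_i(z)X^-_j(w)=X^-_j(w)X^+_i(z)$. The delta-function terms of (R3) are not in conflict with this: they appear only when one compares formal expansions in opposite regions $|w/z|\ll1$ versus $|z/w|\ll1$, and they are computed from the residues at the poles classified in Lemma~\ref{lem:location}, which by Lemmas~\ref{lem:EFres} and \ref{lem:EFvalue} match the form of the RHS of (R3).

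The hard part will be the verification of the diagonal property of $C^{\epsilon'_1\epsilon'_2;\epsilon''_1\epsilon''_2}(u,i-j)$, which amounts to a four-term identity among products $[u+P][\check u+\check P]$ and $[u][\check u]$ divided by $[u+\beta][\check u+\check\beta][P][\check P]$ etc. Here one must exploit $[u+M+1]=[u]$ for odd $M$ together with the integrality of $P_{1,2}+\check P_{1,2}$ to collapse the dynamical parameters, reducing the identity to the classical Riemann-type theta relation. Once this is in hand, the remaining verification that the scalar agrees with $(-1)^{i-j-1}\gamma_{j,i}/\gamma_{i,j}$ across the three regimes of \eqref{gii}--\eqref{gij2} is a bookkeeping exercise in $q_1$-shifted factorials, carried out separately for each sub-case.
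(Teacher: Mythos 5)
Your strategy is the same as the paper's: expand each product over the $\pm$ components, commute the unchecked and checked screened intertwiners separately via Proposition~\ref{commPhi}, and show that the contraction of the two dynamical $R$-matrices over the shared indices collapses the fourfold sum back to the diagonal, leaving only the scalar prefactors $\rho\check\rho$ (resp.\ $\rho^*\check\rho^*$, resp.\ $1$) to be matched against $(-1)^{i-j-1}\gamma_{j,i}/\gamma_{i,j}$. Two corrections, though. First, the ``central claim'' you defer is both sharper and much easier than you make it out to be: since $\beta+\check\beta=M+1\in\Z$ and, by \eqref{P12P12}, the eigenvalues of $P_{1,2}+\check P_{1,2}$ are integers, and since every entry of $R(u;\beta,P)$ is separately $\Z$-periodic in $u,\beta,P$ (because $[u+1]=-[u]$ and each entry is a ratio of products of two brackets), one has
\[
R(u;\check\beta,\check P_{1,2})=R(u;-\beta,-P_{1,2})=R(-u;\beta,P_{1,2})=R(u;\beta,P_{1,2})^{-1},
\]
the last equality being the standard unitarity of the elliptic dynamical $R$-matrix. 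Hence the contraction is literally the identity matrix, i.e.\ your $c(u,i-j)\equiv1$; no new four-term theta identity has to be proved, and the case-by-case matching reduces to the purely scalar identities $\rho(z/w)\check\rho(q_1^{i-j}z/w)=(-1)^{i-j-1}\gamma_{j,i}(w,z)/\gamma_{i,j}(z,w)$ and its starred analogue. Second, watch the index placement: with both factors written as $R_{\epsilon'_1\epsilon'_2;\epsilon_1\epsilon_2}$ and $R_{\epsilon''_1\epsilon''_2;\epsilon_1\epsilon_2}$ your tensor $C$ equals $R\,({}^tR)^{-1}$, which is \emph{not} diagonal because $R(u;\beta,P)$ is not symmetric. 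The transposition produced by the renormalization \eqref{Phitilde2} affects exactly one of the two factors (the tilde'd one), so the correct contraction is $\sum_{\epsilon}R_{\epsilon;\epsilon'}\,\check R_{\epsilon'';\epsilon}=(\check R R)_{\epsilon'';\epsilon'}=\delta_{\epsilon'\epsilon''}$. With these two points repaired, your argument for all three identities goes through and coincides with the proof in the paper.
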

\begin{proof}
We write
$R(z)=R(u;\beta,P_{1,2})$,  
$\check{R}(z)=R(u;\check{\beta},\check{P}_{1,2})$ 
for $z=q_1^u$.
We have $\check{\beta}\equiv -\beta$ $\bmod\, \Z$, 
and 
the eigenvalues of $P_{1,2}+\check{P}_{1,2}$ are integers
by \eqref{P12P12}.
Since $R(u;\beta,P)$ is separately $\Z$-periodic in all its 
arguments, we obtain 
\begin{align*}
R(u;\check{\beta},\check{P}_{1,2})
&=R(u;-\beta,-P_{1,2})\\
&=R(-u;\beta,P_{1,2})\\
&=R(u;\beta,P_{1,2})^{-1}\,.
\end{align*}
This means $R(z/w)\check{R}(z/w)=\id$.

Hence the commutation relations in Proposition \ref{commPhi} imply that
\begin{align*}
X^+_i(z)X^-_j(w)
&=\mathcal{N}_2
\sum_{\epsilon_1,\epsilon_2}
\tilde{\Phi}_{\epsilon_1}(z) \tilde{\Phi}^*_{\epsilon_2}(w) 
\check{\Phi}_{\epsilon_1}(q_1^iz) 
\check{\Phi}^{*}_{\epsilon_2}(q_1^{-j}w) \\
&=\mathcal{N}_2
\sum_{\epsilon'_1,\epsilon'_2, \epsilon''_1,\epsilon''_2}
\tilde{\Phi}^*_{\epsilon'_1}(w) \tilde{\Phi}_{\epsilon'_2}(z) 
\Bigl({}^tR(z/w){}^t\check{R}(q_1^{i+j}z/w)
\Bigr)_{\epsilon_1'\epsilon_2';\epsilon_1''\epsilon_2''}
\check{\Phi}^{*}_{\epsilon''_1}(q_1^{-j}w) 
\check{\Phi}_{\epsilon''_2}(q_1^iz) \,
\\
&=X^-_j(w)X^+_i(z)\,.
\end{align*} 
In particular, this implies that both sides are rational.

The other relations hold for the same reason,
in view of the relations
\begin{align*}
\rho(z/w)\check{\rho}(q_1^{i-j}z/w) 
&=(-1)^{i-j-1}\frac{\gamma_{j,i}(w,z)}{\gamma_{i,j}(z,w)}
\,,
\\
\rho^*(z/w)\check{\rho}^*(q_1^{-i+j}z/w) 
&=(-1)^{i-j-1}\frac{\gamma_{j,i}(z,w)}{\gamma_{i,j}(w,z)}\,.
\end{align*}
\end{proof}

\begin{lem}\label{lem:EE-FF}
The $X^+X^+$ and $X^-X^-$ relations (R2) hold. 
\end{lem}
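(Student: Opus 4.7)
The plan is to upgrade the rational-function identity of Lemma \ref{lem:rel-rat} to the formal-series identity (R2), in which the LHS is expanded in $|w/z|\ll 1$ and the RHS in $|z/w|\ll 1$. For this it suffices to verify that the matrix coefficients of $\gamma_{i,j}(z,w)X^+_i(z)X^+_j(w)$ on $\mathbb{F}_{2,2;2,2}$ are Laurent polynomials in $z,w$: once regularity is established, both expansions agree term-by-term with this common Laurent polynomial, so (R2) follows. The identical argument with the starred intertwiners will give the $X^-X^-$ relation, so I focus on $X^+X^+$.

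First, I would decompose
\[
X^+_i(z)X^+_j(w)=\sum_{\epsilon_1,\epsilon_2\in\{\pm\}}\tilde{\Phi}_{\epsilon_1}(z)\tilde{\Phi}_{\epsilon_2}(w)\cdot \check{\Phi}_{\epsilon_1}(q_1^iz)\check{\Phi}_{\epsilon_2}(q_1^jw),
\]
the two groups commuting because they act on different tensor factors. By Lemma \ref{lem:location}, for each $(\epsilon_1,\epsilon_2)$ the product $\tilde{\Phi}_{\epsilon_1}(z)\tilde{\Phi}_{\epsilon_2}(w)$ has poles only at $w/z=q_1^{-1-k}$ for $k\ge 0$ and, when $\epsilon_1+\epsilon_2=0$, in addition at $w/z=q_2q_1^{-r}$ for $r\ge 0$. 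Similarly, $\check{\Phi}_{\epsilon_1}(q_1^iz)\check{\Phi}_{\epsilon_2}(q_1^jw)$ has poles only at $w/z=q_1^{i-j-k}$ for $k\ge 0$ and, when $\epsilon_1+\epsilon_2=0$, at $w/z=q_2^{-1}q_1^{M-1+i-j-\check{r}}$ for $\check{r}\ge 0$, where the identity $\cq_2=q_2q_1^{1-M}$ has been used. All such poles are simple.

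The central step is the pole-matching check: going through the three regimes $i=j$, $0<j-i\le M$, $j-i>M$ (and their symmetric counterparts when $j<i$, or when $M<0$ in which case $q_2$ and $q_3$ swap roles by the isomorphism \eqref{Mto-M}), I would verify that the zeros of $\gamma_{i,j}(z,w)$ defined in \eqref{gii}--\eqref{gij2} cancel these poles with the correct multiplicities. The factors $(z-q_1^rw)$ for $1\le r\le j-i-1$ clean up the diagonal poles at $w/z=q_1^{-r}$ present for every sign choice. The factors $(z-q_2q_1^{-r}w)$ cancel the off-diagonal poles from the $\epsilon_1+\epsilon_2=0$ sector, where the $\tilde{\Phi}$- and $\check{\Phi}$-contributions interlock into a single arithmetic progression of length $M$ by virtue of $\cq_2=q_2q_1^{1-M}$. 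Once regularity is established, admissibility of $\mathbb{F}_{2,2;2,2}$ (Remark \ref{rem:admissible}) forces the matrix coefficients of $\gamma_{i,j}(z,w)X^+_i(z)X^+_j(w)$ to be Laurent polynomials, completing the proof.

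The hard part will be this case-by-case combinatorial bookkeeping, and in particular ensuring that the prefactor zeros hidden in the normalizations of Lemma \ref{lem:location} do not leave spurious uncancelled poles or introduce extraneous zeros of $\gamma_{i,j}(z,w)$ that are not present in $X^+_i(z)X^+_j(w)$. The interlocking between the $\tilde\Phi$ and $\check\Phi$ pole families is delicate and depends on the precise relations among $q_1,q_2,\cq_2$ and $M$, but once Lemma \ref{lem:location} is available, the verification is mechanical.
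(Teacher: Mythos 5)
Your overall reduction is the right one and matches the paper: by Lemma \ref{lem:rel-rat} the two orderings agree as rational functions, so it suffices to show that $\gamma_{i,j}(z,w)X^+_i(z)X^+_j(w)$ is a Laurent polynomial, after which both prescribed expansions coincide with it. But your central step --- that the zeros of $\gamma_{i,j}(z,w)$ cancel all the poles of $X^+_i(z)X^+_j(w)$ --- cannot work, and this is a genuine gap. The screened components $\tilde{\Phi}_{\epsilon_1}(z)\tilde{\Phi}_{\epsilon_2}(w)$ and $\check{\Phi}_{\epsilon_1}(q_1^iz)\check{\Phi}_{\epsilon_2}(q_1^jw)$ with $\epsilon_1+\epsilon_2=0$ contribute, via contour pinching, two \emph{infinite} families of potential poles, at $w/z=q_2q_1^{-r}$ for all large $r$ and at $w/z=q_2^{-1}q_1^{r}$ for all small $r$ (Lemma \ref{lem:location}); a polynomial $\gamma_{i,j}$ of finite degree can only remove finitely many of them. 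Indeed, the paper's own bookkeeping shows that, e.g.\ for $i=j$, the product $\gamma_{i,i}(z,w)X^+_i(z)X^+_i(w)$ still has potential poles at $w/z=q_2q_1^{-r}$ for all $r\ge M$ and at $w/z=q_2^{-1}q_1^{r}$ for all $r\le -1$. Establishing their absence directly would require showing that the residues cancel in the sum over $(\epsilon_1,\epsilon_2)$, a computation of the type in Lemma \ref{lem:EFres} that your proposal does not contain.

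The missing idea is a two-sided disjointness argument. One carries out the same pole bookkeeping for the opposite ordering and finds that the potential pole set of $\gamma_{j,i}(w,z)X^+_j(w)X^+_i(z)$ (e.g.\ for $i=j$: $w/z=q_2q_1^{-r}$, $r\le -1$, and $w/z=q_2^{-1}q_1^{r}$, $r\ge M$) is disjoint from that of $\gamma_{i,j}(z,w)X^+_i(z)X^+_j(w)$ in each of the three regimes. Since by Lemma \ref{lem:rel-rat} the two expressions are the \emph{same} rational function, its actual poles must lie in the intersection of the two sets, which is empty; hence both sides are Laurent polynomials and (R2) follows. Your proposal uses only one ordering and therefore has no mechanism to rule out the infinitely many surviving candidate poles.
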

\begin{proof}
In view of Lemma \ref{lem:rel-rat},
they are satisfied as rational functions. 

On the other hand, by Lemma \ref{lem:location},
the poles of 
\begin{align*}
z^{-M-1}\frac{(q_1w/z;q_1)_{j-i-1}}{(q_2^{-1}w/z;q_1)_{j-i+M}} 
X^+_i(z)X^+_j(w) 
\end{align*}
are at
\begin{align*}
\frac{w}{z}=q_2q_1^{-r}\,,\quad r\ge 0\,;
\quad \frac{w}{z}=q_2^{-1}q_1^r\,,\quad r\le M+i-j-1\,,
\end{align*}
and the poles of 
\begin{align*}
w^{-M-1}\frac{(q_1z/w;q_1)_{i-j-1}}{(q_2^{-1}z/w;q_1)_{i-j+M}}  
X^+_j(w)X^+_i(z)
\end{align*}
are at
\begin{align*}
\frac{w}{z}=q_2q_1^{-r}\,,\quad r\le M-i+j-1\,;
\quad \frac{w}{z}=q_2^{-1}q_1^r\,,\quad r\ge 0\,.
\end{align*}
We may assume $M\ge0$, $i\le j$. 
Comparing with the definition of $\gamma_{i,j}(z,w)$, 
we find that 
$\gamma_{i,j}(z,w)X^+_i(z)X^+_j(w)$ has poles at most at the following:
\begin{align*}
i=j\quad:&\quad  \frac{w}{z}=q_2q_1^{-r}\,,\quad r\ge M;
\quad \frac{w}{z}=q_2^{-1}q_1^{r}\,,\quad r\le -1\,,
\\
0<j-i\le M\quad:&\quad  
\frac{w}{z}=q_2q_1^{-r}\,,\quad r\ge M+j-i;
\quad \frac{w}{z}=q_2^{-1}q_1^{r}\,,\quad r\le -1\,,
\\
j-i>M\quad:&\quad  
\frac{w}{z}=q_2q_1^{-r}\,,\quad r\ge M+j-i;
\quad \frac{w}{z}=q_2^{-1}q_1^{r}\,,\quad r\le M-j+i-1\,.
\end{align*}
Similarly 
$\gamma_{j,i}(w,z)X^+_j(w)X^+_i(z)$ has poles at most at the following:
\begin{align*}
i=j\quad:&\quad  \frac{w}{z}=q_2q_1^{-r}\,,\quad r\le -1;
\quad \frac{w}{z}=q_2^{-1}q_1^{r}\,,\quad r\ge M\,,
\\
0<j-i\le M\quad:&\quad  
\frac{w}{z}=q_2q_1^{-r}\,,\quad r\le -1;
\quad \frac{w}{z}=q_2^{-1}q_1^{r}\,,\quad r\ge M-j+i\,,
\\
j-i>M\quad:&\quad  
\frac{w}{z}=q_2q_1^{-r}\,,\quad r\le -1;
\quad \frac{w}{z}=q_2^{-1}q_1^{r}\,,\quad r\ge0\,.
\end{align*}
In each case these sets do not intersect.
It follows that
both sides of the relation \eqref{EE2222} are actually Laurent polynomials. 
Hence the $X^+X^+$ relations hold as formal series relations. 

Proof of 
the $X^-X^-$ relations is entirely similar.
\end{proof}

\begin{lem}\label{lem:EFpole}
Operator $X^+_i(z)X^-_j(w)$ has at most simple poles at 
\begin{align}
&q_1^{1-\beta+r}\frac{w}{z}=1,\quad 0\le r\le -i-j+M-1\,,
\label{poleEF-}
\\
&q_1^{1-\beta+r}\frac{z}{w}=1\,,\quad 0\le r\le i+j+M-1\,.
\label{poleEF+}
\end{align}
\end{lem}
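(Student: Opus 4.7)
The plan is to expand the product $X^+_i(z)X^-_j(w)$ via the four sign choices in the definitions \eqref{level2-E}, \eqref{level2-F}, localize potential poles using Lemma \ref{lem:location}, and then use the residue computations of Lemmas \ref{lem:EFres} and \ref{lem:EFvalue} to decide which apparent poles actually survive in the product.

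From the definitions,
\begin{align*}
X^+_i(z)X^-_j(w)=\mathcal{N}_2\sum_{\epsilon_1,\epsilon_2=\pm}\tilde\Phi_{\epsilon_1}(z)\tilde\Phi^*_{\epsilon_2}(w)\otimes\check\Phi_{\epsilon_1}(q_1^iz)\check\Phi^*_{\epsilon_2}(q_1^{-j}w),
\end{align*}
where the $\otimes$ separates the two commuting tensor sectors. Lemma \ref{lem:location} implies that each factor is regular unless $\epsilon_1+\epsilon_2=0$; in that case the unchecked factor has simple poles only at $w/z=q_2q_1^{-r}=q_1^{\beta-1-r}$ with $r\ge 0$, i.e.\ at $q_1^{1-\beta+r}w/z=1$, while the checked factor has simple poles only at $q_1^iz/(q_1^{-j}w)=\check q_2q_1^{\check r}=q_1^{\beta-M+\check r}$ with $\check r\ge 0$, i.e.\ at $q_1^{1-\beta+r}z/w=1$ where $r=M-1+i+j-\check r$. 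Since $\beta$ is generic, these two families of loci are disjoint, so only the summands with $(\epsilon_1,\epsilon_2)\in\{(+,-),(-,+)\}$ can contribute poles, and each contribution is simple.

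The range bounds are obtained by a residue analysis. At a pole $w=q_1^{\beta-1-r}z$ of the first family ($r\ge 0$), Lemma \ref{lem:EFres} gives the residue of $X^+_i(z)X^-_j(w)$ in the form
\begin{align*}
\frac{\mathcal{N}_2 B^2}{[\beta][P_{1,2}]}\Delta k^-_r(q_1^{-r}z)\otimes\Bigl(\check\Phi_+(q_1^iz)\check\Phi^*_-(q_1^{-j}w)-\check\Phi_-(q_1^iz)\check\Phi^*_+(q_1^{-j}w)\Bigr)\Big|_{w=q_1^{\beta-1-r}z}.
\end{align*}
The specialization of the checked factor corresponds exactly to Lemma \ref{lem:EFvalue} with parameter $\check r=M-1-i-j-r$. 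When $\check r\ge 0$, i.e.\ $r\le -i-j+M-1$, that lemma evaluates the antisymmetric difference to a generically non-zero multiple of $\Delta\check k^-_{\check r}$, producing an actual pole; when $\check r<0$, i.e.\ $r>-i-j+M-1$, the contour-integral derivation of Lemma \ref{lem:EFvalue} shows that the antisymmetric combination vanishes identically at this specialization, so the apparent pole is absent. This yields the bound $0\le r\le -i-j+M-1$ in the first family. Using the equality $X^+_i(z)X^-_j(w)=X^-_j(w)X^+_i(z)$ as rational functions, furnished by Lemma \ref{lem:rel-rat}, the second family is handled by running the same analysis on the $X^-X^+$ ordering: the relevant pole now arises from the $\tilde\Phi^*\tilde\Phi$ factor (via the second residue formula of Lemma \ref{lem:EFres}), and the corresponding specialization of the checked factor is controlled by the first formula of Lemma \ref{lem:EFvalue} after a commutation. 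The resulting bound is $0\le r\le i+j+M-1$.

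The main technical point to verify is the vanishing of the antisymmetric combinations $\check\Phi_+\check\Phi^*_--\check\Phi_-\check\Phi^*_+$ and $\Phi_+\Phi^*_--\Phi_-\Phi^*_+$ at specializations with negative index in Lemma \ref{lem:EFvalue}. This follows from revisiting the contour-integral derivation of that lemma: when the specialization index is negative, the finite sum of residues at pinched screening-contour poles expressing the difference becomes empty, so the two integrands coincide over a common contour and the antisymmetric combination is identically zero.
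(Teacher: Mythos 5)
Your proof reaches the correct conclusion, but it takes a noticeably heavier route than the paper's. The paper's (very terse) proof — ``a similar argument based on Lemma \ref{lem:location}'' — means the same two-step argument as in Lemma \ref{lem:EE-FF}: by Lemma \ref{lem:rel-rat} the two orderings $X^+_i(z)X^-_j(w)$ and $X^-_j(w)X^+_i(z)$ define the same rational function, and Lemma \ref{lem:location} bounds the potential poles of each ordering separately. From the ordering $X^+X^-$ one reads off that family \eqref{poleEF-} requires $r\ge0$ (these are poles of the unchecked factor) and family \eqref{poleEF+} requires $r\le i+j+M-1$ (poles of the checked factor); from the ordering $X^-X^+$ one reads off the complementary bounds $r\le -i-j+M-1$ and $r\ge 0$. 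Intersecting the two pole sets gives the lemma with no residue computation at all. You instead obtain the upper bound in \eqref{poleEF-} by computing the residue via Lemma \ref{lem:EFres} and arguing that the antisymmetric combination $\check{\Phi}_+\check{\Phi}^*_- - \check{\Phi}_-\check{\Phi}^*_+$ vanishes at specializations with negative index $\check{r}$; your contour justification of that vanishing (the pinching points disappear when $\check{r}<0$, so the two integrals coincide) is correct, but it is an extra verification the paper's route avoids, and you should check that no other poles of the common integrand separate the two contours. Your treatment of the second family is the weakest point: the combination arising there is $\check{\Phi}^*_\pm\check{\Phi}_\mp$, which is not literally ``the first formula of Lemma \ref{lem:EFvalue}'', and in fact no residue analysis is needed — once Lemma \ref{lem:rel-rat} is invoked, the bound $r\ge0$ in \eqref{poleEF+} follows directly from Lemma \ref{lem:location} applied to the $X^-X^+$ ordering, since points of family \eqref{poleEF+} with $r<0$ are not potential poles of either tensor factor in that ordering. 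A side benefit of your approach is that it also identifies which poles are generically present (nonzero residue), which the lemma does not require.
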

\begin{proof}
This can be shown by a similar argument based on
Lemma \ref{lem:location}.

\end{proof}

It remains to compute the residues of $X^+_i(z)X^-_j(w)$ at these points. 
\bigskip

\noindent{\it End of Proof of Theorem \ref{thm:level2}.}\quad
As noted before, if we write $\check{z}=q_1^iz$ and $\check{w}=q_1^{-j}w$, then
\eqref{poleEF-}, \eqref{poleEF+} 
can be rewritten respectively as
\begin{align*}
&
q_1^{1-\check{\beta}+\check{r}}
\frac{\check{z}}{\check{w}}=1\,,\quad r+\check{r}=-i-j+M-1\,,
\quad r,\check{r}\ge0\,,
\\
&
q_1^{1-\check{\beta}+\check{r}}
\frac{\check{w}}{\check{z}}=1
\,,\quad r+\check{r}=i+j+M-1\,,
\quad r,\check{r}\ge0\,.
\end{align*}
Using \eqref{ResP*P} and \eqref{valcPP} we obtain
\begin{align*}
\mathcal{N}_2^{-1}
\mathop{\mathrm{Res}}_{w=q_2q_1^{-r}z}X_i^+(z)X_j^-(w)\frac{dw}{w} 
&=\mathop{\mathrm{Res}}_{w=q_2 q_1^{-r} z}\sum_{\epsilon=\pm}
\tilde{\Phi}_\epsilon(z)\tilde{\Phi}^*_{-\epsilon}(w)
\check{\Phi}_\epsilon(\check{z})
\check{\Phi}^*_{-\epsilon}(\check{w})
\frac{dw}{w}
\\
&=\frac{B^2}{[\beta][P_{1,2}]}\Delta k^-_r(q_1^{-r}z)
\Bigl\{
\check{\Phi}_+(\check{z})\check{\Phi}^*_-(\cq_2\cq_1^{-\check{r}}\check{z})
-
\check{\Phi}_-(\check{z})\check{\Phi}^*_+(\cq_2\cq_1^{-\check{r}}\check{z})
\Bigr\}
\\
&=\frac{\check{A}\check{B}B^2}{[\beta][\check\beta]}
\frac{[\check{P}_{1,2}]}{[P_{1,2}]}
\Delta k^-_r(q_1^{-r}z)
\Delta \check{k}^-_{\check{r}}(\cq_1^{-\check{r}}\check{z})\,
\\
&=(-1)^{L+1}\frac{\check{A}\check{B}B^2}{[\beta][\check\beta]}
\Delta k^-_r(q_1^{-r}z)
\Delta \check{k}^-_{\check{r}}(\cq_1^{-\check{r}}\check{z})\,.
\end{align*}
In the last line we use the fact 
that $P_{1,2}+\check{P}_{1,2}\equiv L\bmod 2\Z$, see \eqref{P12P12}.

Next we use \eqref{ResPPc*} to compute
\begin{align*}
\mathcal{N}_2^{-1}
\mathop{\mathrm{Res}}_{z=q_2q_1^{-r}w}X_i^+(z)X_j^-(w)\frac{dz}{z} 
&=\mathop{\mathrm{Res}}_{z=q_2 q_1^{-r} w}\sum_{\epsilon=\pm}
\tilde{\Phi}_\epsilon(z)\tilde{\Phi}^*_{-\epsilon}(w)
\check{\Phi}_\epsilon(\check{z})
\check{\Phi}^*_{-\epsilon}(\check{w})
\frac{d\check{z}}{\check{z}}
\\
&=\sum_{\epsilon=\pm}
\tilde{\Phi}_\epsilon(q_2q_1^{-r}w)\tilde{\Phi}_{-\epsilon}^*(w)
\frac{[\check{\beta}-\epsilon\check{P}_{1,2}]}{[\check{\beta}]}
\check{B}^2\Delta \check{k}^+_{\check{r}}(\cq_1^{-\check{r}}\check{w})
\\
&=\sum_{\epsilon=\pm}
\Phi_\epsilon(q_2q_1^{-r}w)\Phi_{-\epsilon}^*(w)
\frac{-1}{[P_{1,2}-\epsilon\beta][P_{1,2}]}
\frac{[\check{\beta}-\epsilon\check{P}_{1,2}]}{[\check{\beta}]}
\check{B}^2\Delta \check{k}^+_{\check{r}}(\cq_1^{-\check{r}}\check{w})
\,.
\end{align*}
We have $\check\beta\equiv -\beta\bmod \Z$, so that
\begin{align*}
\frac{[\check{\beta}-\epsilon\check{P}_{1,2}]}{[\check{\beta}]}
=
\frac{[\beta+\epsilon\check{P}_{1,2}]}{[\beta]} 
=(-1)^L
\frac{[\beta-\epsilon {P}_{1,2}]}{[\beta]}\,. 
\end{align*}
Hence we obtain from \eqref{valPP}
\begin{align*}
\mathcal{N}_2^{-1}
 \mathop{\mathrm{Res}}_{z=q_2q_1^{-r}w}X_i^+(z)X_j^-(w)\frac{dz}{z} 
&=
\Bigl\{
\Phi_+(q_2q_1^{-r}w)\Phi_{-}^*(w)
-
\Phi_{-}(q_2q_1^{-r}w)\Phi_{+}^*(w)
\Bigr\}
(-1)^L\frac{\check{B}^2}{[P_{1,2}][\beta]}
\Delta \check{k}^+_{\check{r}}(\cq_1^{-\check{r}}\check{w})
\\
&=(-1)^L \frac{AB\check{B}^2}{[\beta]^2}
\Delta k^+_r(q_1^{-r}w)
\Delta \check{k}^+_{\check{r}}(\cq_1^{-\check{r}}\check{w})\,.
\end{align*}
We choose
\begin{align*}
\mathcal{N}_2^{-1}= (-1)^L \frac{AB\check{B}^2}{[\beta]^2}\,.
\end{align*}
Noting the identity
\begin{align*}
\frac{\check{A}B}{A\check{B}}=\frac{[\beta]}{[\check{\beta}]}
\end{align*}
we conclude that
\begin{align*}
 \mathop{\mathrm{Res}}_{z=q_2q_1^{-r}w}X_i^+(z)X_j^-(w)\frac{dz}{z} 
&=
\Delta k^+_r(q_1^{-r}w)
\Delta \check{k}^+_{\check{r}}(\cq_1^{-\check{r}}\check{w})\,, 
\\
\mathop{\mathrm{Res}}_{w=q_2q_1^{-r}z}X_i^+(z)X_j^-(w)\frac{dw}{w} 
&=-
\jm{\Delta k^-_r(q_1^{-r}z)
\Delta \check{k}^-_{\check{r}}(\cq_1^{-\check{r}}\check{z})}\,.
\end{align*}

The proof is over.

\qed
\medskip

\section{Representation $\mathbb{F}_{2,\ldots,2;2,\ldots,2}$}
\label{sec:2222}

In this section we sketch the construction of the representation
$\mathbb{F}_{2,\ldots,2;2,\ldots,2}(v_1,\ldots,v_n;
\check{v}_1,\ldots,\check{v}_n)$ for odd $M$.
As before, we write 
$v_i=q_1^{\lambda_i}$, 
$\check{v}_i=\cq_1^{\check{\lambda}_i}$ and assume that 
$\lambda_i+\check{\lambda}_i\in\Z$, $1\le i\le n$.
\medskip

For $1\le i\le n$ we set
$P_i=\id^{\otimes(i-1)}\otimes P\otimes \id^{\otimes(n-i)}$, 
$P_{i,j}=P_i-P_j$, 
and
\begin{align*}
S^{(i)}(x)&=\id^{\otimes (i-1)}\otimes\Phi^*(x)\otimes\Phi(x)\otimes\id^{\otimes(n-i-1)}\,.
\end{align*}
Define
\begin{align*}
\Phi^{(1)}(z)&=\Phi(z)\otimes {\id}^{\otimes(n-1)}\,,\quad
\Phi^{*(n)}(w)={\id}^{\otimes(n-1)}\otimes \Phi^*(w)\,,
\end{align*}
and 
\begin{align}
\Phi^{(i)}(z)&=\int\!\cdots\!\int\prod_{j=1}^{i-1}\frac{dx_j}{2\pi \sqrt{-1} x_j}
\Phi^{(1)}(z)S^{(1)}(x_1)\cdots S^{(i-1)}(x_{i-1})
\prod_{j=1}^{i-1}
\frac{[t_j-t_{j-1}-P_{i,j}+\frac{1-\beta}{2}]}
{[t_j-t_{j-1}+\frac{1+\beta}{2}]}\,,
\label{Phii}\\
\Phi^{*(i)}(w)&=\int\!\cdots\!\int\prod_{j=i}^{n-1}\frac{dx_j}{2\pi \sqrt{-1} x_j}
S^{(i)}(x_{i})\cdots S^{(n-1)}(x_{n-1})
\Phi^{*(n)}(w)
 \prod_{j=i+1}^{n}
\frac{[t_j-t_{j-1}-P_{i,j}+\frac{1+\beta}{2}]}
{[t_j-t_{j-1}+\frac{1+\beta}{2}]}\,,
\label{Phii*}
\end{align}
where  $x_0=z$, $x_n=w$, $x_i=q_1^{t_i}$.
In \eqref{Phii}, for $1\le j\le i-1$, the contour for $x_j$ separates the set
$\{s_2^{-1}q_1^lx_{j-1}\mid l\ge 0\}$ and 
$\{s_2q_1^{-l}x_{j-1}\mid l\ge 0\}$.
In \eqref{Phii*}, for $i\le j\le n-1$, the contour for $x_j$ separates the set
$\{s_2^{-1}q_1^lx_{j+1}\mid l\ge 0\}$ and
$\{s_2q_1^{-l}x_{j+1}\mid l\ge 0\}$.

For $n=2$, the notation in Section \ref{sec:s22s22}
is related to the one above
as $\Phi_+(z)=\Phi^{(1)}(z)$, $\Phi_-(z)=\Phi^{(2)}(z)$,
 $\Phi^*_+(z)=\Phi^{*(2)}(z)$, $\Phi^*_-(z)=\Phi^{*(1)}(z)$.
\medskip

The non-trivial commutation relations among these operators 
are as follows:
\begin{align*}
&S^{(1)}(x)\Phi^{(1)}(z)=
\Phi^{(1)}(z)S^{(1)}(x)
\frac{[t-u+\frac{1-\beta}{2}]}
{[t-u+\frac{1+\beta}{2}]}\,,
\\
&S^{(n-1)}(x)\Phi^{*(n)}(w)=
\Phi^{*(n)}(w)S^{(n-1)}(x)
\frac{[t-v+\frac{1-\beta}{2}]}
{[t-v+\frac{1+\beta}{2}]}\,,
\\
&S^{(i)}(x_1)S^{(i)}(x_2)=
S^{(i)}(x_2)S^{(i)}(x_1)
\frac{[t_1-t_2+\beta]}{[t_1-t_2-\beta]}\,,
\\
&S^{(i)}(x_1)S^{(i+1)}(x_2)=S^{(i+1)}(x_2)S^{(i)}(x_1)
\frac{[t_1-t_2+\frac{1-\beta}{2}]}{[t_1-t_2+\frac{1+\beta}{2}]}\,.
\end{align*}

The following is a counterpart of the result of \cite{AJMP}
where the deformed $W_n$ algebra was treated.

\begin{thm}\label{thm:AJMP}
We set  $z_i=q_1^{u_i}$. Then
the following commutation relations hold: 
\begin{align*}
&\Phi^{(i)}(z_1) \Phi^{(i)}(z_2) 
=\rho(z_1/z_2)\Phi^{(i)}(z_2) \Phi^{(i)}(z_1) \,,\\
&\Phi^{(i)}(z_1) \Phi^{(j)}(z_2) 
=\rho(z_1/z_2)\\
&\times\Bigl(
\Phi^{(j)}(z_2) \Phi^{(i)}(z_1)\frac{[u_1-u_2]}{[u_1-u_2+\beta]}
\frac{[P_{i,j}-\beta]}{[P_{i,j}]}
+\Phi^{(i)}(z_2) \Phi^{(j)}(z_1)
\frac{[u_1-u_2+P_{i,j}]}{[u_1-u_2+\beta]}\frac{[\beta]}{[P_{i,j}]}
\Bigr) 
\quad \text{for $i\neq j$}, 
\\
&\Phi^{*(i)}(z_1) \Phi^{*(i)}(z_2) 
=\rho^*(z_1/z_2)\Phi^{*(i)}(z_2) \Phi^{*(i)}(z_1) \,,\\
&\Phi^{*(i)}(z_1) \Phi^{*(j)}(z_2) 
=\rho^*(z_1/z_2)\\
&\times\Bigl(
\Phi^{*(j)}(z_2) \Phi^{*(i)}(z_1)\frac{[u_1-u_2]}{[u_1-u_2+\beta]}
\frac{[P_{j,i}-\beta]}{[P_{j,i}]}
+\Phi^{*(i)}(z_2) \Phi^{*(j)}(z_1)
\frac{[u_1-u_2+P_{j,i}]}{[u_1-u_2+\beta]}\frac{[\beta]}{[P_{j,i}]}
\Bigr) 
\quad \text{for $i\neq j$},
\\
&\Phi^{(i)}(z_1)\Phi^{*(j)}(z_2)=\Phi^{*(j)}(z_2)\Phi^{(i)}(z_1)
\quad \text{for $i\neq j$},
\\
&\Phi^{(i)}(z_1)\Phi^{*(i)}(z_2)
=\sum_{k=1}^n\Phi^{*(k)}(z_2)\Phi^{(k)}(z_1) 
\frac{[u_1-u_2-n\frac{1-\beta}{2}-\beta+P_{i,k}]}{[u_1-u_2-n\frac{1-\beta}{2}]}
\prod_{j(\neq k)}\frac{[P_{j,i}+\beta]}{[P_{j,k}]}\,.
\end{align*}
\end{thm}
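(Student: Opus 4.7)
The plan is to mirror the free field calculation of \cite{AJMP}, whose only additional input here is the inertness of the ``checked'' tensor factor and of the auxiliary dynamical variables under all manipulations. I would organize the argument by the nature of the commuted pair.

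First, I would dispatch the same-index cases $\Phi^{(i)}(z_1)\Phi^{(i)}(z_2)$ and $\Phi^{*(i)}(z_1)\Phi^{*(i)}(z_2)$ by induction on $i$. The base $i=1$ reduces to the contraction \eqref{cont1} for $\Phi(z)\Phi(w)$ at position $1$, producing exactly the scalar $\rho(z_1/z_2)$. For the inductive step, the two integrals share screening variables $x_1,\dots,x_{i-1}$ attached to $z_1$ and $y_1,\dots,y_{i-1}$ attached to $z_2$; one uses the screening--screening exchange $S^{(k)}(x)S^{(k)}(y)=S^{(k)}(y)S^{(k)}(x)\,[t-s+\beta]/[t-s-\beta]$ and the elementary $S$-$S$ commutations across neighbors to reshuffle the nested contour, after which the theta-function factors telescope and reproduce $\rho(z_1/z_2)$ times the swapped operator.

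Second, for $i\neq j$ in $\Phi^{(i)}(z_1)\Phi^{(j)}(z_2)$ (and the analogous $*$-case), assume $i<j$; the opposite ordering follows by the same argument. I would combine the two integrals into one nested integral whose screening chain reaches from position $1$ to $j-1$, with the vertex operator insertions $\Phi(z_1)$ and $\Phi(z_2)$ appearing respectively at the chain positions $i-1\to i$ and $j-1\to j$. The swap then amounts to interchanging the two insertion points along the chain. Because each pairwise $\Phi$-$S$ commutation is a dynamical shift, the net effect is to produce exactly the two-term elliptic $R$-matrix linear combination in the theorem; the dynamical shift $P_{i,j}\to P_{i,j}\pm\beta$ comes from displacing $\Phi(z_2)$ past the screening block $S^{(i)}\cdots S^{(j-1)}$.

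Third, the mixed case $\Phi^{(i)}(z_1)\Phi^{*(j)}(z_2)$ with $i\neq j$: the screening variables of $\Phi^{(i)}$ live on indices $\{1,\dots,i-1\}$ while those of $\Phi^{*(j)}$ live on $\{j,\dots,n-1\}$, and the essential vertex operators sit at positions $1$ and $n$. A case analysis based on $i<j$ versus $i>j$ shows that any would-be pinch of contours requires the existence of a screening chain connecting position $i$ to position $j$ through consecutive indices, which is possible only when $i=j$; in all other configurations the contours can be deformed past each other, proving commutativity.

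Finally, the core case $\Phi^{(i)}(z_1)\Phi^{*(i)}(z_2)$ is where the theorem's content lies. I would write the product as a single nested integral over all $x_1,\dots,x_{n-1}$ and then slide $\Phi^*$ from position $n$ leftward through the entire screening chain toward position $i$, using the $\Phi$-$S$ commutations of the second step above. At each stage the contour for one screening variable may pinch between a pole from $\Phi(z_1)$ and one from the shifted $\Phi^*(z_2)$; the sum over the index $k\in\{1,\dots,n\}$ indicates the tensor position at which the ``$\Phi^*$ insertion'' finally lands, producing a term proportional to $\Phi^{*(k)}(z_2)\Phi^{(k)}(z_1)$. The main obstacle, and really the arithmetic heart of the proof, will be checking that the dynamical theta-coefficients produced by this cascade of residues recombine into the closed form
\begin{align*}
\frac{[u_1-u_2-n\tfrac{1-\beta}{2}-\beta+P_{i,k}]}{[u_1-u_2-n\tfrac{1-\beta}{2}]}
\prod_{j(\neq k)}\frac{[P_{j,i}+\beta]}{[P_{j,k}]}\,.
\end{align*}
This is a partial-fraction identity in the dynamical variables of the form
\begin{align*}
\frac{[u+\alpha]}{[u]}\prod_{j}\frac{[P_{j,i}+\beta]}{[P_{j,i}]}
=\sum_{k=1}^n \frac{[u-n\tfrac{1-\beta}{2}-\beta+\alpha+P_{i,k}]}{[u-n\tfrac{1-\beta}{2}]}\prod_{j(\neq k)}\frac{[P_{j,i}+\beta]}{[P_{j,k}]},
\end{align*}
which is the elliptic generalization of the residue theorem on a torus and is exactly the identity that underlies the formulas in \cite{AJMP}. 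Once this identity is in hand, matching coefficients term by term completes the proof.
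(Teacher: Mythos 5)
Your proposal follows essentially the same route as the paper: the paper's entire proof of Theorem \ref{thm:AJMP} is the single sentence that the result ``can be shown by an adaptation of the method of \cite{AJMP}'', and your case-by-case free-field computation (contractions for equal indices, the two-term elliptic $R$-matrix exchange for distinct indices, contour-pinching analysis for the commuting mixed case, and the elliptic partial-fraction identity governing the $\Phi^{(i)}(z_1)\Phi^{*(i)}(z_2)$ expansion) is precisely that adaptation spelled out. There is no discrepancy to report.
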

\begin{proof}
This can be shown by an adaptation of the method of \cite{AJMP}. 
\end{proof}
\medskip

The $\check{\E}_1$ counterpart 
$\check{\Phi}^{(i)}(z)$, $\check{\Phi}^{*(i)}(z)$, 
$\check{S}^{(i)}(x)$ are defined by replacing $\Phi(z),\Phi^*(z),\beta$
by $\check{\Phi}(z),\check{\Phi}^*(z),\check{\beta}$, respectively.
Commutation relations for them
are obtained from Theorem \ref{thm:AJMP}
replacing $\beta$ by $\check{\beta}$, $P_i$ by $\check{P}_i$, and 
$\rho(z),\rho^*(z)$ by 
$\check{\rho}(z),
\check{\rho}^*(z)$, respectively.

In addition we change the normalization 
\begin{align*}
&\tilde{\Phi}^{(i)}(z)=\Phi^{(i)}(z)\prod_{k(\neq i)}[P_{i,k}]^{-1}\,,
\quad
\tilde{\Phi}^{*(i)}(z)=\Phi^{*(i)}(z)\prod_{k(\neq i)}[P_{i,k}]^{-1}\,.
\end{align*}

\begin{thm}\label{thm:leveln}
With an appropriate choice of the normalization constant
$\mathcal{N}_n$, 
the assignment
\begin{align*}
&X^+_i(z)\mapsto
\sum_{\mu=1}^n \tilde{\Phi}^{(\mu)}(z)
\check{\Phi}^{(\mu)}(q_1^iz)
\,,
\\
&X^-_i(z)\mapsto
\mathcal{N}_n
\sum_{\mu=1}^n 
\tilde{\Phi}^{*(\mu)}(z)
\check{\Phi}^{*(\mu)}(q_1^{-i}z)
\,,
\end{align*}
yields a representation of 
$\mathcal{A}_{M,n(M-1)}$ on 
$\mathbb{F}_{2,\ldots,2;2,\ldots,2}
(v_1,\ldots,v_n;\check{v}_1,\ldots,\check{v}_n)$.
\qed
\end{thm}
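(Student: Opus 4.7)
The plan is to follow the strategy used for Theorem \ref{thm:level2}, treating the $n$-fold case as a natural generalization. The essential point is that the commutation relations of the screened intertwiners $\tilde{\Phi}^{(\mu)}(z)$ and $\check{\Phi}^{(\mu)}(z)$ from Theorem \ref{thm:AJMP} are governed by $n$-dimensional elliptic dynamical R matrices, and the two R matrices (for $\beta,P_{i,j}$ and for $\check{\beta},\check{P}_{i,j}$) are mutually inverse thanks to $\check{\beta}\equiv -\beta\bmod \Z$ and $P_{i,j}+\check P_{i,j}\in\Z$. I would first check (R1): the intertwining properties of each $\Phi^{(\mu)}(z)$ and $\check{\Phi}^{(\mu)}(z)$ with $\E_1[K]$ and $\check{\E}_1[\check K]$ follow directly from the intertwining properties of the single vertex operators $\Phi(z),\Phi^*(z)$ combined with the fact that $S^{(i)}(x)$ commutes with the action of $\E_1$ up to a total $q_1$-derivative; the integrations against the kernels in \eqref{Phii}--\eqref{Phii*} preserve this intertwining. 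The integrality of $\lambda_i+\check\lambda_i$ ensures that $X^\pm_i(z)$ is expanded in integer powers of $z$ on the admissible module.

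For the $X^+X^+$ and $X^-X^-$ relations (R2), I would proceed in two stages. Stage one is to verify them as rational identities. Using the matrix formulas in Theorem \ref{thm:AJMP} and its checked version, we write
\[
X^+_i(z)X^+_j(w)=\sum_{\mu,\nu}\tilde\Phi^{(\mu)}(z)\tilde\Phi^{(\nu)}(w)\,
\check\Phi^{(\mu)}(q_1^iz)\check\Phi^{(\nu)}(q_1^jw).
\]
Moving both pairs past each other introduces $\rho(z/w)\check\rho(q_1^{i-j}z/w)$ as an overall scalar, while the dynamical R matrix acting on the $\Phi$-factors is the inverse of the one acting on the $\check\Phi$-factors, so the two matrix pieces cancel and leave only a reordered sum; this sum is exactly $X^+_j(w)X^+_i(z)$. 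Matching the scalar with $\gamma_{j,i}(w,z)/\gamma_{i,j}(z,w)$ as in the proof of Lemma \ref{lem:rel-rat} yields \eqref{X+X+}. Stage two is to show that the apparent poles of both sides outside the common zero set of the $\gamma$'s are spurious; this is done exactly as in Lemma \ref{lem:EE-FF}, using the straightforward generalization of Lemma \ref{lem:location} to the multi-contour integrals \eqref{Phii}--\eqref{Phii*}. The $X^-X^-$ case is parallel, and the $X^+X^-$ commutation $X^+_i(z)X^-_j(w)=X^-_j(w)X^+_i(z)$ away from the resonance loci also follows by the same R matrix cancellation.

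The main technical step is (R3): computing the residues of $X^+_i(z)X^-_j(w)$ at the resonance points. By the generalization of Lemma \ref{lem:location}, all poles of $X^+_i(z)X^-_j(w)$ are simple and lie on either $w/z=q_2q_1^{-r}$ with $0\le r\le -i-j+M-1$ or its swap. Only the diagonal summand $\mu=\nu$ and one off-diagonal pair $(\mu,\nu)$ with $\mu\ne\nu$ can contribute a pole in each term of $\sum_{\mu,\nu}\tilde{\Phi}^{(\mu)}(z)\tilde{\Phi}^{*(\nu)}(w)\check\Phi^{(\mu)}(q_1^iz)\check\Phi^{*(\nu)}(q_1^{-j}w)$; the remaining summands are regular. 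Generalizing Lemma \ref{lem:EFres} via the multi-contour pinching argument gives
\[
\mathop{\mathrm{Res}}\nolimits_{w=q_1^{-1+\beta-r}z}\sum_{\mu}\tilde\Phi^{(\mu)}(z)\tilde\Phi^{*(\mu)}(w)\frac{dw}{w}\;\propto\; \Delta^{(n)}k^-_r(q_1^{-r}z),
\]
where $\Delta^{(n)}$ denotes the $n$-fold coproduct applied to $k^-_r(z)$, and an analogous generalization of Lemma \ref{lem:EFvalue} for the checked side yields $\Delta^{(n)}\check k^-_{\check r}(\cq_1^{-\check r}q_1^i z)$. Multiplying and fixing $\mathcal{N}_n$ so that the product of proportionality constants equals $1$, and checking the sign via $P_{i,j}+\check P_{i,j}\equiv 0\bmod 2\Z$ (as in the last display of the Proof of Theorem \ref{thm:level2}), produces exactly the right-hand side of \eqref{X+X-} with shift parameter $N=n(M-1)$.

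The hard part will be the pole analysis and residue computation in step three: generalizing Lemmas \ref{lem:location}, \ref{lem:EFres}, and \ref{lem:EFvalue} to the iterated contour integrals \eqref{Phii}--\eqref{Phii*} requires keeping careful bookkeeping of which contours get pinched and using the coproduct formula for $k^\pm_r(z)$ (Lemma \ref{lem:DeltaK}) repeatedly in order to reassemble $n$-fold fused currents $\Delta^{(n)}k^\pm_r(z)$. The sign tracking through the elliptic theta identities $[u+1]=-[u]$ and the dynamical shifts is where most of the work lies; everything else is a direct adaptation of the $n=2$ argument.
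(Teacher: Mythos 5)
Your outline is correct and follows exactly the route the paper intends: the paper itself gives no written proof of Theorem \ref{thm:leveln} (it is stated with \qed, supported only by the commutation relations of Theorem \ref{thm:AJMP}), and the intended argument is precisely the generalization of the proof of Theorem \ref{thm:level2} that you describe — R matrix cancellation from $\check\beta\equiv-\beta$ and $P_{i,j}+\check P_{i,j}\in\Z$ for (R2), pole analysis and residue computation via the $n$-fold analogues of Lemmas \ref{lem:location}, \ref{lem:EFres}, \ref{lem:EFvalue} together with Lemma \ref{lem:DeltaK} for (R3). The only small inaccuracies are that (in the superscript labeling) the poles of $\tilde\Phi^{(\mu)}(z)\tilde\Phi^{*(\nu)}(w)$ at the resonance loci should occur only for $\mu=\nu$, and that the relevant congruence is $P_{i,j}+\check P_{i,j}\equiv L_{i,j}\bmod 2\Z$ with $L_{i,j}$ a fixed integer (not necessarily even) whose sign gets absorbed into $\mathcal{N}_n$; neither affects the structure of the argument.
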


\bigskip




\medskip

\vskip 1cm

{\bf Acknowledgments.\ }
BF is partially supported by ISF 3037 2025 1 1.
MJ is partially supported by JSPS KAKENHI Grant Numbers 25K07041, 23K03137, 
25K06942. 
EM is partially supported by Simons Foundation grant number \#709444.

The authors thank M. Bershtein for helping with the literature.
BF and EM thank Rikkyo University, where a part of this work was done, 
for hospitality.
\vskip 1cm

\end{document}